\newtheorem{theorem}{Theorem}[section]
\newtheorem{corollary}[theorem] {Corollary}
\newtheorem{definition}[theorem]{Definition}
\newtheorem{lemma} [theorem]{Lemma}
\newtheorem{proposition}[theorem]{Proposition}
\newtheorem{remark}[theorem]{Remark}
\title{This is the title}
\begin{document}
\begin{center}
{\bf{MULTIPLIERS FOR  OPERATOR-VALUED  BESSEL SEQUENCES,  GENERALIZED HILBERT-SCHMIDT AND TRACE CLASSES}}\\
 K. Mahesh Krishna$^1$,   P. Sam Johnson$^1$, and R. N. Mohapatra$^2$ \\
 $^1$Department of Mathematical and Computational Sciences\\ 
 National Institute of Technology Karnataka (NITK), Surathkal,  Mangaluru 575 025, India  \\
 Emails : kmaheshak@gmail.com, 
 sam@nitk.edu.in \\
 
 $^2$Department of Mathematics\\
 University of Central Florida, Orlando, FL., 32816, USA \\
 Email : ram.mohapatra@ucf.edu

\end{center}
	
\hrule
\vspace{0.5cm}
\textbf{Abstract}: Let $\{\lambda_n\}_n \in \ell^\infty(\mathbb{N})$. In 1960,  R. Schatten \cite{SCHATTEN} studied operators of the form $\sum_{n=1}^{\infty}\lambda_n (x_n\otimes \overline{y_n})$, where $\{x_n\}_n$, $\{y_n\}_n$ are orthonormal sequences in a Hilbert space.    In 2007,  P. Balazs \cite{BALAZS3} generalized some of the the results of Schatten \cite{SCHATTEN}. In this paper, we further generalize the result of Balazs by  studying the   operators of the form  $\sum_{n=1}^{\infty}\lambda_n (A^*_nx_n\otimes \overline{B^*_ny_n})$, where $\{A_n\}_n$     and $\{B_n\}_n$ are operator-valued Bessel sequences  and  $\{x_n\}_n$, $\{y_n\}_n$ are sequences in the Hilbert space such that $\{\|x_n\|\|y_n\|\}_n  \in \ell^\infty(\mathbb{N})$. We   generalize the classes of Hilbert-Schmidt  and trace class operators. 

\textbf{Keywords}:  Multipliers, operator-valued bases, operator-valued Bessel sequences,  Hilbert-Schmidt classes, trace classes.
	
\textbf{Mathematics Subject Classification (2010)}:  42C15, 46C05, 47A05, 47L20.

\section{Introduction}

 Let $\mathcal{H},\mathcal{H}_0$ (resp. $\mathcal{X},\mathcal{Y}$) be  Hilbert spaces (resp.  Banach spaces) and  $\mathcal{B}(\mathcal{X},\mathcal{Y})$  (resp. $  \mathcal{K}(\mathcal{X},\mathcal{Y})$) be the Banach space of all bounded linear operators (resp. compact operators) from $\mathcal{X}$ to $\mathcal{Y}$. We set $\mathcal{B}(\mathcal{X})\coloneqq\mathcal{B}(\mathcal{X},\mathcal{X})$, $\mathcal{K}(\mathcal{X})\coloneqq\mathcal{K}(\mathcal{X},\mathcal{X})$ and  for $A \in \mathcal{B}(\mathcal{H})$, $ [A]\coloneqq (A^*A)^{1/2}$. 
 \begin{definition}\cite{SCHATTEN}
  For $x,y \in \mathcal{H}$, define	$x\otimes \overline{y}:\mathcal{H}\ni h \mapsto \langle h, y\rangle x\in \mathcal{H}$.
  \end{definition}
 In Chapter 1 of \cite{SCHATTEN}, Schatten made a detailed study of operators of the form
 \begin{align}\label{FIRST EQUATION}
 \sum_{n=1}^{\infty}\lambda_n (x_n\otimes \overline{y_n}),
 \end{align}
   where $\{\lambda_n\}_n$ is in $\ell^\infty(\mathbb{N})$ and  $\{x_n\}_n$, $\{y_n\}_n$ are orthonormal sequences in a Hilbert space. It is then showed that every  compact operator (what Schatten called as a completely continuous operator) is of  the form in (\ref{FIRST EQUATION}) with $\lambda_n\geq 0, \forall n \in \mathbb{N}$ and $\lambda_n\to 0$ as $n\to \infty$ (the spectral theorem for compact operators). Chapters 2 and 3 of \cite{SCHATTEN} contain Hilbert Schmidt $\mathcal{S}(\mathcal{H})$ and trace class operators $\mathcal{T}(\mathcal{H})$, respectively. These two chapters mainly contain results obtained by Schatten and von Neumann in their joint paper \cite{SCHATTENVONNEUMANN}. It was showed that both $\mathcal{S}(\mathcal{H})$ and $\mathcal{T}(\mathcal{H})$ admit norms under which they are complete and they are two sided star closed ideals in $\mathcal{B}(\mathcal{H})$. It is further proved  that the norm on $\mathcal{S}(\mathcal{H})$ comes from an inner product and hence  $\mathcal{S}(\mathcal{H})$ is a Hilbert space. Since a Hilbert space is self-dual, a natural  question occurs in Chapter 3 of \cite{SCHATTEN} is $-$  what is the dual of $\mathcal{T}(\mathcal{H})$? This is answered in Chapter 4, of \cite{SCHATTEN} which shows that $\mathcal{T}(\mathcal{H})^*=\mathcal{B}(\mathcal{H})$ and $\mathcal{K}(\mathcal{H})^*=\mathcal{T}(\mathcal{H})$. Chapter 5 of \cite{SCHATTEN} introduces the definitions of cross norms and norm ideals,  (studied in \cite{SCHATTEN4, SCHATTEN5, SCHATTEN6, SCHATTENVONNEUMANN1, SCHATTENVONNEUMANN}). Symmetric gauge functions are also introduced in Chapter 5 of \cite{SCHATTEN}.

 Consequently, Chapters 2, 3 and 5 of \cite{SCHATTEN} are extended to Schatten p-classes  \cite{RINGROSE, DIESTEL} and  operator ideals \cite{DEFANTFLORET, PIETSCH}. It was in 2007, when P. Balazs \cite{BALAZS3} generalized the operator in  (\ref{FIRST EQUATION}) by relaxing orthonormal sequences  $\{x_n\}_n$, $\{y_n\}_n$ to Bessel sequences (we refer \cite{CHRISTENSEN} for Bessel sequence and its properties). The first question while considering the  operator in (\ref{FIRST EQUATION}) is its existence. Whenever $\{x_n\}_n$ and  $\{y_n\}_n$ are orthonormal, Schatten considered (for $n, m \in \mathbb{N}$ and $h \in \mathcal{H}$), 
 \begin{align*}
 \left\|\sum_{k=n}^{m}\lambda_k (x_k\otimes \overline{y_k})h\right\|^2=\sum_{k=n}^{m}\sum_{r=n}^{m}\lambda_k\overline{\lambda_r}\langle h,  y_k\rangle\langle  y_r,h\rangle\langle x_k,  x_r\rangle= \sum_{k=n}^{m}|\lambda_k|^2|\langle h,  y_k\rangle|^2.
 \end{align*}
 Bessel's inequality now tells that the  operator in (\ref{FIRST EQUATION}) exists. This idea won't work when we drop the orthonormality. In the case $\{x_n\}_n$ and  $\{y_n\}_n$ are Bessel sequences, Balazs realized that the operator in (\ref{FIRST EQUATION}) exists and appears as the composition of three bounded linear operators (Theorem 6.1 in \cite{BALAZS3}) acting on Hilbert spaces whose existence comes from frame theory. Balazs and Stoeva studied invertibility of these operators in \cite{STOEVABALAZS1, STOEVABALAZS2, STOEVABALAZS3, STOEVABALAZS4, STOEVABALAZS5}.

 In Section \ref{MULTIPLIERSSECTION}  we generalize the work of Balazs by considering  operator-valued Bessel sequence  (see (\ref{GENERALOPERATOR})) and we derive various properties of it and continuity of multipliers. Section  \ref{HSCLASSSECTION} extends the class of Hilbert-Schmidt operators and Section \ref{TRACECLASSSECTION} extends  trace class operators. The classes which we define in Sections \ref{HSCLASSSECTION} and \ref{TRACECLASSSECTION} will depend upon a conjugate-linear isometry, an operator-valued orthonormal basis and a sequence in a Hilbert space (unlike Hilbert-Schmidt classes and trace classes which do not depend upon orthonormal bases of the Hilbert space (Lemma 2.1 and Lemma 2.3 in \cite{SCHATTENVONNEUMANN})). We show that generalized Hilbert-Schmidt class admits a semi-norm and  it is a two sided  star closed ideal in $\mathcal{B}(\mathcal{H})$. We are unable to do this for generalized trace class where we show this class is closed under multiplication and taking adjoints. In both Sections \ref{HSCLASSSECTION} and \ref{TRACECLASSSECTION} we follow the same strategy done in Chapters 2 and 3 of \cite{SCHATTEN}, respectively. 
  
  We remark here that certain generalizations of the operator studied by Balazs (in \cite{BALAZS3}) has been studied in \cite{FAROUGHIELNAZ, RAHIMI2, RAHIMIBALAZS2,  JAVANSHIRICHOUBIN, ARIASPACHECO}.

 In the remaining part of introduction we list some definitions and results which we use in the sequel. 
  
\begin{definition} \cite{DUFFINSCHAEFFER}
A sequence $\{x_n\}_n$ in $\mathcal{H}$ is said to be a frame if there exist $a,b>0$ such that
\begin{align*}
a\|h\|^2\leq \sum_{n=1}^{\infty} |\langle h, x_n\rangle|^2\leq b \|h\|^2, ~ \forall h \in \mathcal{H}.
\end{align*}
Constants $a$ and $b$ are called  frame bounds. If $a$ is allowed to take the value 0, then $\{x_n\}_n$ is called as a Bessel sequence with bound $b$.
\end{definition}
  
 \begin{definition}  \cite{SUN}
A collection  $\{A_n\}_n$ in  $\mathcal{B}(\mathcal{H},\mathcal{H}_0)$ is said to be an operator-valued Bessel sequence (g-Bessel sequence) with bound $b>0$	if 
$\sum_{n=1}^{\infty}\|A_nh\|^2\leq b\|h\|^2,\forall h \in \mathcal{H}.$
 \end{definition}
 It can be seen easily that if $\{A_n\}_n$ is   an operator-valued Bessel sequence   in  $\mathcal{B}(\mathcal{H},\mathcal{H}_0)$ with bound $b$, then 	$\|A_n\|\leq \sqrt{b}, \forall n \in \mathbb{N}$. In fact, $\|A_nh\|^2\leq \sum_{k=1}^{\infty}\|A_kh\|^2\leq b\|h\|^2, \forall h \in \mathcal{H}, \forall n \in \mathbb{N}$.
 \begin{definition}\cite{SUN}
 A collection  $\{A_n\}_n$ in  $\mathcal{B}(\mathcal{H},\mathcal{H}_0)$ is said to be  
 \begin{enumerate}[\upshape(i)]
 \item an operator-valued Riesz basis (g-Riesz basis)	if  $\{h \in \mathcal{H}: A_nh=0, \forall n \in \mathbb{N}\}=\{0\}$ and there exist $a,b>0$ such that for every finite  $\mathbb{S} \subseteq \mathbb{N}$,
 \begin{align*}
 a \sum_{n \in \mathbb{S}} \|y_n\|^2\leq \left\| \sum_{n \in \mathbb{S}} A_n^*y_n\right\|^2\leq b \sum_{n \in \mathbb{S}} \|y_n\|^2, ~\forall y_n \in \mathcal{H}_0.
 \end{align*}
 \item  an operator-valued orthonormal basis (g-basis)	if $\langle A_n^*y , A_m^*z\rangle =\delta_{n,m} \langle y,z\rangle,  \forall n, m \in \mathbb{N}  , \forall y,z \in \mathcal{H}_0$,  and $\sum_{n=1}^{\infty}\|A_nh\|^2=\|h\|^2,\forall h \in \mathcal{H}.$
  \end{enumerate}
 \end{definition}
 We refer the reader \cite{SUN}  for examples and properties of operator-valued orthonormal bases and Riesz bases. 
 \begin{definition}\cite{MAHESHKRISHNASAMJOHNSON}
A collection  $\{A_n\}_n$ in  $\mathcal{B}(\mathcal{H},\mathcal{H}_0)$ is said to be    an operator-valued 
\begin{enumerate}[\upshape(i)]
\item orthogonal sequence 	if $\langle A_n^*y , A_m^*z\rangle =\delta_{n,m} \langle y,z\rangle,  \forall n, m \in \mathbb{N}  , \forall y,z \in \mathcal{H}_0.$
\item orthonormal sequence 	if it is orthogonal and  $\sum_{n=1}^{\infty}\|A_nh\|^2\leq\|h\|^2,\forall h \in \mathcal{H}.$
\end{enumerate}	
 \end{definition}
We provide the proofs of following two theorems for the sake of reader.
 \begin{theorem}\cite{MAHESHKRISHNASAMJOHNSON} \label{ORTHONORMALBASISCRITERION}
 If  $\{A_n\}_n$ and  $\{B_n\}_n$ are two  operator-valued orthonormal bases in $\mathcal{B}(\mathcal{H},\mathcal{H}_0)$,  then there exists a unique unitary  $U \in \mathcal{B}(\mathcal{H})$ such that $A_n=B_nU, \forall n \in \mathbb{N}$.	
 \end{theorem}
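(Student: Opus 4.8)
The plan is to recognize each operator-valued orthonormal basis as a unitary onto the $\ell^2$-direct sum $\mathcal{E} := \{(y_n)_n : y_n \in \mathcal{H}_0,\ \sum_n \|y_n\|^2 < \infty\}$ of countably many copies of $\mathcal{H}_0$, and then to manufacture $U$ as a composition of two such unitaries. First I would fix an operator-valued orthonormal basis $\{A_n\}_n$ and introduce its analysis operator $\Theta_A : \mathcal{H} \to \mathcal{E}$, $\Theta_A h := (A_n h)_n$. The Parseval-type identity $\sum_{n} \|A_n h\|^2 = \|h\|^2$ shows at once that $\Theta_A$ is a well-defined linear isometry, so $\Theta_A^*\Theta_A = I_{\mathcal{H}}$. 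A direct computation of the adjoint gives $\Theta_A^* ((y_n)_n) = \sum_n A_n^* y_n$, where convergence of the series in $\mathcal{H}$ follows from the orthogonality relation by evaluating $\big\|\sum_{n\in\mathbb{S}} A_n^* y_n\big\|^2 = \sum_{n\in\mathbb{S}} \|y_n\|^2$ on finite sets $\mathbb{S}$ and passing to the limit.

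Next I would use the orthogonality condition in its equivalent operator form. From $\langle A_n^* y, A_m^* z\rangle = \delta_{n,m}\langle y,z\rangle$ for all $y,z \in \mathcal{H}_0$ one reads off $A_n A_m^* = \delta_{n,m} I_{\mathcal{H}_0}$, and hence
\begin{align*}
\Theta_A \Theta_A^* \big((y_n)_n\big) = \Big(A_n \sum_m A_m^* y_m\Big)_n = \Big(\sum_m A_n A_m^* y_m\Big)_n = (y_n)_n,
\end{align*}
so $\Theta_A \Theta_A^* = I_{\mathcal{E}}$. Together with $\Theta_A^* \Theta_A = I_{\mathcal{H}}$ this shows $\Theta_A$ is a unitary from $\mathcal{H}$ onto $\mathcal{E}$; the same applies to $\{B_n\}_n$, producing a unitary $\Theta_B$. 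Setting $U := \Theta_B^{-1}\Theta_A = \Theta_B^* \Theta_A \in \mathcal{B}(\mathcal{H})$ yields a unitary satisfying $\Theta_B U = \Theta_B\Theta_B^*\Theta_A = \Theta_A$; comparing the $n$-th components of $\Theta_B U h = \Theta_A h$ gives $B_n U h = A_n h$ for every $h \in \mathcal{H}$ and every $n$, that is, $A_n = B_n U$.

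For uniqueness, suppose $A_n = B_n V$ for some bounded $V$. Applying $B_n^*$, summing over $n$, and using $\sum_n B_n^* B_n = \Theta_B^* \Theta_B = I_{\mathcal{H}}$, I would obtain $\sum_n B_n^* A_n = \sum_n B_n^* B_n V = V$, so that $V = \sum_n B_n^* A_n$ is forced; in particular $V = U$, proving uniqueness (even among all bounded operators). I expect the only delicate points to be the convergence of the operator series $\sum_n A_n^* y_n$ and $\sum_n B_n^* A_n$ together with the identification of $\Theta_A^*$, i.e.\ the bookkeeping showing that the two defining conditions of an operator-valued orthonormal basis amount precisely to the relations $\Theta_A^*\Theta_A = I$ and $\Theta_A\Theta_A^* = I$. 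Once that equivalence is established, the construction of $U$ and its uniqueness are purely formal.
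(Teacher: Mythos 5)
Your proof is correct, and it takes a genuinely different route from the paper's, even though both constructions produce the same operator: your $U=\Theta_B^*\Theta_A$ acts by $h\mapsto\sum_{n}B_n^*A_nh$, which is exactly the operator the paper defines directly as the SOT-convergent series $\sum_{n=1}^\infty B_n^*A_n$. The difference lies in how unitarity is established. The paper verifies $UU^*=I_\mathcal{H}$ and $U^*U=I_\mathcal{H}$ by hand, through double-series manipulations resting on $A_nA_m^*=\delta_{n,m}I_{\mathcal{H}_0}$ and $\sum_n B_n^*B_n=I_\mathcal{H}$, with the interchanges of infinite sums left implicit; you instead factor each basis through its analysis operator into $\mathcal{E}=\ell^2(\mathbb{N},\mathcal{H}_0)$ and show once that the two defining conditions of an operator-valued orthonormal basis are precisely $\Theta^*\Theta=I_\mathcal{H}$ (the Parseval identity) and $\Theta\Theta^*=I_\mathcal{E}$ (the relation $A_nA_m^*=\delta_{n,m}I_{\mathcal{H}_0}$, read off correctly from $\langle A_n^*y,A_m^*z\rangle=\delta_{n,m}\langle y,z\rangle$, combined with continuity of each $A_n$). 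Unitarity of $U$ then comes for free as a composition of unitaries, and all convergence bookkeeping is concentrated in the identification of $\Theta_A^*$, which you justify via the isometry on finite sums $\bigl\|\sum_{n\in\mathbb{S}}A_n^*y_n\bigr\|^2=\sum_{n\in\mathbb{S}}\|y_n\|^2$. Your uniqueness argument coincides with the paper's: both apply $\sum_n B_n^*(\cdot)$ to $B_nV=A_n$ and use $\sum_n B_n^*B_n=I_\mathcal{H}$ to force $V=\sum_n B_n^*A_n$, and like the paper you get uniqueness among all bounded operators, not merely among unitaries. What your approach buys is a reusable structural fact (operator-valued orthonormal bases correspond exactly to unitaries $\mathcal{H}\to\mathcal{E}$, in the spirit of Sun's treatment of g-frames) and airtight handling of the series interchanges, at the modest cost of introducing the auxiliary space $\mathcal{E}$; the paper's direct computation is shorter on the page but tacitly assumes the validity of the double-sum rearrangements that your factorization makes rigorous.
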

 \begin{proof}
 (Existence) Define	$ U\coloneqq \sum_{n=1}^\infty B_n^*A_n.$ This operator exists in the strong-operator topology, since for every  $n,m \in \mathbb{N}$ with $n<m$ and $ h \in \mathcal{H},$ 
 	\begin{align*}
 	\left\|\sum_{j=n}^mB_j^*A_jh\right\|^2=\left\langle\sum_{j=n}^mB_j^*A_jh, \sum_{k=n}^mB_k^*A_kh\right\rangle= \sum_{j=n}^m\left\langle A_jh, B_j\left(\sum_{k=n}^mB_k^*A_kh\right) \right\rangle=\sum_{j=1}^n\|A_jh\|^2.
 	\end{align*}
 	 Now $ B_nU=B_n(\sum_{m=1}^\infty B_m^*A_m)=A_n,  \forall n \in \mathbb{N}.$ We now show that   $ U$ is unitary. For, 
 	 \begin{align*}
 	 UU^*=(\sum_{n=1}^\infty B_n^*A_n)(\sum_{m=1}^\infty A^*_mB_m)=\sum_{n=1}^\infty B_n^*(\sum_{m=1}^\infty A_nA^*_mF_m)=\sum_{n=1}^\infty B_n^*B_n=I_\mathcal{H} 
 	 \end{align*}
 	 and 
 	 \begin{align*}
 	  U^*U=(\sum_{n=1}^\infty A_n^*B_n)(\sum_{m=1}B^*_mA_m)=\sum_{n=1}^\infty A_n^*(\sum_{m=1}^\infty B_nB^*_mA_m)=\sum_{n=1}^\infty A_n^*A_n=I_\mathcal{H}.
 	 \end{align*}
 (Uniqueness) Let $W \in \mathcal{B}(\mathcal{H})$ also satisfies $B_nU=B_nW=A_n, \forall n \in \mathbb{N}$. Then $U=I_\mathcal{H}U=\sum_{n=1}^\infty B_n^*(B_nU)=\sum_{n=1}^\infty B_n^*(B_nW)=W.$
\end{proof}
 
 \begin{theorem}\cite{MAHESHKRISHNASAMJOHNSON}\label{RIESZBASISCRITERION}
If $\{F_n\}_n$ in $\mathcal{B}(\mathcal{H},\mathcal{H}_0)$ is an operator-valued orthonormal basis in $\mathcal{B}(\mathcal{H},\mathcal{H}_0)$ and $\{A_n\}_n$ is an operator-valued Riesz basis  in $\mathcal{B}(\mathcal{H},\mathcal{H}_0)$, then there exists a unique   invertible $T \in \mathcal{B}(\mathcal{H})$ such that $A_n=F_nT, \forall n \in \mathbb{N}$.
 \end{theorem}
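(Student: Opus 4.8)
The plan is to mimic the construction in Theorem~\ref{ORTHONORMALBASISCRITERION} to produce $T$ and to settle existence and uniqueness, and then to extract \emph{invertibility} separately from the Riesz bounds; the latter is where the real work lies.

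\emph{Set-up and construction.} I would first record the two operators attached to $\{A_n\}_n$: the synthesis operator $S\colon \ell^2(\mathcal H_0)\to\mathcal H$, $S\{y_n\}_n=\sum_n A_n^*y_n$, and the analysis operator $\Theta\colon\mathcal H\to\ell^2(\mathcal H_0)$, $\Theta h=\{A_nh\}_n$, which satisfy $S^*=\Theta$. The upper Riesz bound says $S$ is bounded on finitely supported sequences (so it extends to $\ell^2(\mathcal H_0)$ with $\|S\|\le\sqrt b$), and taking adjoints gives $\sum_n\|A_nh\|^2=\|\Theta h\|^2\le b\|h\|^2$; thus $\{A_n\}_n$ is a Bessel sequence. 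Now define $T\coloneqq\sum_{n=1}^\infty F_n^*A_n$ in the strong operator topology. Exactly as in Theorem~\ref{ORTHONORMALBASISCRITERION}, the orthonormality relations $F_kF_j^*=\delta_{k,j}I_{\mathcal H_0}$ give
\[
\Big\|\sum_{j=n}^m F_j^*A_jh\Big\|^2=\sum_{j=n}^m\|A_jh\|^2,\qquad h\in\mathcal H,
\]
so the Bessel property makes the partial sums Cauchy; hence $T$ exists with $\|T\|\le\sqrt b$, and $F_nT=\sum_mF_nF_m^*A_m=A_n$. Uniqueness is as before: if $F_nW=A_n$ for all $n$, then $W=\sum_nF_n^*F_nW=\sum_nF_n^*A_n=T$, using $\sum_nF_n^*F_n=I_{\mathcal H}$ for the orthonormal basis $\{F_n\}_n$.

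\emph{Invertibility (the crux).} Here $\{A_n\}_n$ is only a Riesz basis, so the clean orthonormal identities are unavailable. The two Riesz inequalities say precisely that $S$ is bounded and bounded below, hence injective with closed range. The defining g-completeness condition $\{h:A_nh=0\ \forall n\}=\{0\}$ says $\ker\Theta=\{0\}$, i.e.\ $\ker S^*=\{0\}$, so $\overline{\operatorname{ran}S}=\mathcal H$; being simultaneously closed and dense, $\operatorname{ran}S=\mathcal H$. Therefore $S$ is a bounded bijection, hence invertible, and so is $\Theta=S^*$.

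\emph{Conclusion.} The analysis operator $\Theta_F h=\{F_nh\}_n$ of the orthonormal basis $\{F_n\}_n$ is a unitary (it is an isometry by definition, and onto since $\Theta_F\big(\sum_mF_m^*z_m\big)=\{z_n\}_n$ for any $\{z_n\}_n\in\ell^2(\mathcal H_0)$), and by construction $T=\Theta_F^*\Theta$. As a composition of invertible operators, $T$ is invertible. The main obstacle is precisely the surjectivity of $S$: the Riesz bounds alone only yield injectivity with closed range, and it is the completeness hypothesis, transported to $\ker S^*$, that upgrades this to all of $\mathcal H$; everything else is a routine adaptation of Theorem~\ref{ORTHONORMALBASISCRITERION}.
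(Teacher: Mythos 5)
Your proof is correct, but it takes a genuinely different route from the paper's. The paper does not argue from the inequality-plus-completeness definition it states: instead it invokes a factorization $A_n=G_nR$ with $\{G_n\}_n$ an operator-valued orthonormal basis and $R\in\mathcal{B}(\mathcal{H})$ invertible (asserted as coming ``from the definition'' of an operator-valued Riesz basis, though with the definition as stated in this paper that factorization is really an equivalent characterization imported from the literature), then sets $T\coloneqq\sum_{n=1}^{\infty}F_n^*G_nR$ and verifies invertibility by exhibiting the explicit two-sided inverse $R^{-1}\left(\sum_{k=1}^{\infty}G_k^*F_k\right)$ through direct computation with the orthonormality relations. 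You instead work entirely from the stated definition: you build $T=\sum_{n=1}^{\infty}F_n^*A_n=\Theta_F^*\Theta$ and extract invertibility from operator theory --- the Riesz bounds make the synthesis operator $S$ bounded and bounded below (hence injective with closed range), the g-completeness condition gives $\ker S^*=\{0\}$ and thus dense range, and closed-plus-dense forces $\operatorname{ran}S=\mathcal{H}$, so $\Theta=S^*$ is invertible and $T$ is a composition of a unitary with an invertible operator. Note that the two constructions produce the same operator, since $G_nR=A_n$ gives $\sum F_n^*G_nR=\sum F_n^*A_n$; only the invertibility verification differs. What each approach buys: yours is self-contained relative to the definition actually printed in the paper and in effect \emph{proves} the factorization the paper presupposes (you obtain $A_n=F_nT$ with $T$ invertible directly), at the cost of introducing the $\ell^2(\mathcal{H}_0)$ machinery; the paper's is shorter once the factorization is granted and hands you a closed-form inverse. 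Your side verifications are all sound: the Bessel property of $\{A_n\}_n$ via $S^*h=\{A_nh\}_n$, the unitarity of $\Theta_F$ via $F_nF_m^*=\delta_{n,m}I_{\mathcal{H}_0}$ (which follows from the stated orthonormality $\langle F_n^*y,F_m^*z\rangle=\delta_{n,m}\langle y,z\rangle$), and the uniqueness argument, which coincides with the paper's use of $\sum_{n=1}^{\infty}F_n^*F_n=I_{\mathcal{H}}$.
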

 \begin{proof}
 (Existence) From the definition of operator-valued Riesz basis, there exists an operator-valued orthonormal basis $\{G_n\}_n$   in  $ \mathcal{B}(\mathcal{H},\mathcal{H}_0)$ and invertible $ R :\mathcal{H}\rightarrow \mathcal{H} $ such that   $ A_n=G_nR,  \forall n \in \mathbb{N} $. Define $ T\coloneqq\sum_{n=1}^\infty F_n^*G_nR.$ Since $\{F_n\}_n$ and  $\{G_n\}_n$ are orthonormal bases, similar to the  proof of Theorem \ref{ORTHONORMALBASISCRITERION}, $ T$ is well-defined. Now $ F_nT=G_nR=A_n,  \forall n \in \mathbb{N}$, 
 \begin{align*}
 T (R^{-1}(\sum_{k=1}^\infty G_k^*F_k))= (\sum_{n=1}^\infty F_n^*G_nR) (R^{-1}(\sum_{k=1}^\infty G_k^*F_k)) =\sum_{n=1}^\infty F_n^* (\sum_{k=1}^\infty G_nG_k^*F_k)=\sum_{n=1}^\infty F_n^*F_n =I_{\mathcal{H}}
 \end{align*}
 and 
 \begin{align*}
 (R^{-1}(\sum_{k=1}^\infty G_k^*F_k))T  =R^{-1}(\sum_{n=1}^\infty G_n^*F_n)(\sum_{k=1}^\infty F_k^*G_kR)=R^{-1}(\sum_{n=1}^\infty G_n^*(\sum_{k=1}^\infty F_nF_k^*G_kR))=R^{-1}(\sum_{n=1}^\infty G_n^*G_n)R =I_{\mathcal{H}}.
 \end{align*}
 (Uniqueness) Let $W \in \mathcal{B}(\mathcal{H})$ also satisfies $F_nT=F_nW=A_n, \forall n \in \mathbb{N}$, then $T=I_\mathcal{H}T=\sum_{n=1}^\infty F_n^*(F_nT)=\sum_{n=1}^\infty F_n^*(F_nW)=W.$	
 \end{proof}
  \begin{theorem}\cite{SCHATTEN}\label{POLARDECOMPOSITIONSCHATTEN} (Polar decomposition) 
  Let $A \in \mathcal{B}(\mathcal{H})$. Then there exists a partial isometry $W$ whose initial space is $\overline{[A](\mathcal{H})}$ and the final space is $\overline{A(\mathcal{H})}$, satisfying the following conditions. 
 \begin{enumerate}[\upshape(i)]
 \item $A=W[A]$.
  \item $[A]=W^*A$.
  \item $A^*=W^*[A^*]$.
  \item  $[A^*]=W[A]W^*$.
  \end{enumerate} 
  The above decomposition of $A$ is unique in the following sense: If $A=W_1B_1$ where $B_1\geq0$ and $W_1$ is a partial isometry with initial space $\overline{B_1(\mathcal{H})}$, then $B_1=[A]$ and $W_1=W$. Further, if $A$ is a finite rank operator, then we can take $W$ as unitary.
  \end{theorem}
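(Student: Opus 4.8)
The plan is to build $W$ directly from the defining relation $A = W[A]$, exploiting the one norm identity that makes the construction consistent. First I would record that for every $h \in \mathcal{H}$,
\[
\|[A]h\|^2 = \langle (A^*A)^{1/2}h, (A^*A)^{1/2}h\rangle = \langle A^*A h, h\rangle = \|Ah\|^2,
\]
so $\|[A]h\| = \|Ah\|$ for all $h$. This single identity drives everything: it shows $\ker[A] = \ker A$ and lets me define a map on the range of $[A]$ by $[A]h \mapsto Ah$. Well-definedness is immediate, since $[A]h_1 = [A]h_2$ forces $\|A(h_1-h_2)\| = \|[A](h_1-h_2)\| = 0$; moreover the map is isometric on $[A](\mathcal{H})$.

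Next I would extend this isometry by continuity to $M \coloneqq \overline{[A](\mathcal{H})}$ and set it equal to $0$ on $M^\perp = \ker[A]$, obtaining a partial isometry $W$ with initial space $M$. By construction $W[A] = A$, giving (i), and the final space is $\overline{W([A](\mathcal{H}))} = \overline{A(\mathcal{H})}$. For the remaining identities the key facts are $W^*W = P_M$ (the orthogonal projection onto the initial space) and $P_M[A] = [A]$ (since $\operatorname{ran}[A] \subseteq M$). Then (ii) follows from $W^*A = W^*W[A] = P_M[A] = [A]$. For (iv) I would check that $W[A]W^*$ is positive and squares to $AA^*$: positivity is clear from $\langle W[A]W^*h,h\rangle = \langle [A]W^*h, W^*h\rangle \geq 0$, while $(W[A]W^*)^2 = W[A](W^*W)[A]W^* = W[A]^2W^* = AA^*$, so uniqueness of the positive square root yields $[A^*] = W[A]W^*$. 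Identity (iii) then drops out as $W^*[A^*] = W^*W[A]W^* = [A]W^* = A^*$.

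For uniqueness, given $A = W_1B_1$ with $B_1 \geq 0$ and $W_1$ a partial isometry with initial space $\overline{B_1(\mathcal{H})}$, I would compute $A^*A = B_1 W_1^*W_1 B_1 = B_1 B_1 = B_1^2$, so $B_1 = (A^*A)^{1/2} = [A]$ again by square-root uniqueness; then $W_1$ and $W$ agree on the dense subspace $\operatorname{ran}[A]$ of $M$ and both vanish on $M^\perp$, forcing $W_1 = W$. Finally, for the finite rank case, $W$ restricts to an isometric isomorphism of $M$ onto $\overline{A(\mathcal{H})}$, so these spaces have equal (finite) dimension; hence $M^\perp$ and $\overline{A(\mathcal{H})}^\perp$ share the same dimension and admit a unitary identification $V$, and replacing $W$ by the operator that equals $W$ on $M$ and $V$ on $M^\perp$ produces a unitary still satisfying $A = W[A]$ (and, one checks, the other three identities as well).

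The step I expect to be the main obstacle is less a single hard estimate than the bookkeeping around $W$ being only a partial isometry: one must consistently use $W^*W = P_M$ together with $P_M[A] = [A]$ rather than treating $W$ as invertible, and the appeals to uniqueness of the positive square root in (iv) and in the uniqueness clause are what make the identifications rigorous.
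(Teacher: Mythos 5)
Your proof is correct, and there is nothing in the paper to compare it against: the theorem is stated without proof, cited directly from Schatten's book, and your argument is precisely the classical one from that source — define $W$ on $\operatorname{ran}[A]$ via $[A]h \mapsto Ah$ using $\|[A]h\| = \|Ah\|$, extend by continuity and by zero on $\ker[A]$, derive (ii)–(iv) from $W^*W = P_{\overline{[A](\mathcal{H})}}$ together with uniqueness of positive square roots, and obtain uniqueness and the finite-rank unitary extension by the dimension count on the orthogonal complements. All steps check out, including the observation that the unitary extension still satisfies $A = W[A]$ because $\operatorname{ran}[A]$ lies in the original initial space.
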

  \begin{definition}\cite{SCHATTENVONNEUMANN}\label{SCHMIDTCLASS}
  Let $\{e_n\}_n$ be an orthonormal basis for $\mathcal{H}$. The Hilbert-Schmidt class is defined as 
  \begin{align*}
  \mathcal{S}(\mathcal{H})\coloneqq \left\{ A \in  \mathcal{B}(\mathcal{H}):\sum_{n=1}^{\infty}\|Ae_n\|^2<\infty \right\}
  \end{align*}
  with the norm of $A \in \mathcal{S}(\mathcal{H})$ is $\sigma(A)\coloneqq\left(\sum_{n=1}^{\infty}\|Ae_n\|^2\right)^{1/2}$.
  \end{definition}\label{TRACECLASS}
  \begin{definition}\cite{SCHATTENVONNEUMANN}
   Let $\{e_n\}_n$ be an orthonormal basis for $\mathcal{H}$. The trace class is defined as 
   \begin{align*}
   \mathcal{T}(\mathcal{H})\coloneqq \{AB:A,B \in \mathcal{S}(\mathcal{H})\}
   \end{align*}
   with the trace of $C \in \mathcal{T}(\mathcal{H})$ is  $\operatorname{Tr}(C)\coloneqq \sum_{n=1}^{\infty}\langle Ce_n,e_n \rangle $ and the norm of $C \in \mathcal{T}(\mathcal{H})$ is $\tau(C)\coloneqq\operatorname{Tr}([C])$.	
  \end{definition}
 \begin{definition}\cite{PIETSCH}
 An operator $T \in \mathcal{B}(\mathcal{X},\mathcal{Y})$ is called nuclear if there exist sequences  $\{f_n\}_n$ in $\mathcal{X}^*$ and $\{y_n\}_n$ in $\mathcal{Y}$ such that $Tx=\sum_{n=1}^{\infty}f_n(x)y_n, \forall x \in \mathcal{X}$. In this case, we define the nuclear-norm of $T$ as 
 \begin{align*}
 \|T\|_{\operatorname{Nuc}}\coloneqq \inf\left\{\sum_{n=1}^{\infty}\|f_n\|\|y_n\|:T \in  \mathcal{B}(\mathcal{X},\mathcal{Y}) \text{ is nuclear with } T=\sum_{n=1}^{\infty}f_n(\cdot)y_n\right\}.
 \end{align*}
 \end{definition}

 \section{Multipliers for operator-valued  Bessel sequences} \label{MULTIPLIERSSECTION}
 \begin{theorem}\label{DEFINITIONEXISTENCE}
 Let $\{A_n\}_n$ and $\{B_n\}_n$ be  operator-valued Bessel sequences in   $\mathcal{B}(\mathcal{H},\mathcal{H}_0)$ with bounds $b$, $d$,  respectively.  If $\{\lambda_n\}_n \in \ell^\infty(\mathbb{N})$, and  $\{x_n\}_n$, $\{y_n\}_n$ are sequences in $\mathcal{H}_0$ such that $\{\|x_n\|\|y_n\|\}_n $ $ \in \ell^\infty(\mathbb{N})$, then the map
 
 \begin{align*}
 T: \mathcal{H} \ni h \mapsto \sum_{n=1}^{\infty}\lambda_n (A^*_nx_n\otimes \overline{B^*_ny_n})h \in \mathcal{H}
 \end{align*}
 is a well-defined bounded linear operator with norm at most $\sqrt{bd}\|\{\lambda_n\}_n\|_\infty\sup_{n\in \mathbb{N}}\|x_n\|\|y_n\|.$
 \end{theorem}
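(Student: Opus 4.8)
The plan is to first rewrite the summands in a transparent form and then control both convergence and the operator norm through a single double application of the Cauchy--Schwarz inequality together with the two Bessel bounds. Using the definition of the rank-one maps and the adjoint relation $\langle h, B_n^*y_n\rangle = \langle B_nh, y_n\rangle$, each term acts as
\begin{align*}
\lambda_n(A_n^*x_n\otimes\overline{B_n^*y_n})h = \lambda_n\langle B_nh, y_n\rangle A_n^*x_n,
\end{align*}
so that formally $Th = \sum_{n=1}^\infty \lambda_n\langle B_nh, y_n\rangle A_n^*x_n$. Linearity in $h$ is then immediate once the series is shown to converge, since $h\mapsto\langle B_nh,y_n\rangle$ is linear for each $n$.

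For well-definedness I would establish that the partial sums form a Cauchy sequence in the complete space $\mathcal{H}$. Fixing $h$ and testing against a unit vector $g\in\mathcal{H}$, a tail pairs as $\sum_{n=k}^m\lambda_n\langle B_nh,y_n\rangle\langle x_n, A_ng\rangle$; bounding $|\langle B_nh,y_n\rangle|\le\|B_nh\|\|y_n\|$ and $|\langle x_n,A_ng\rangle|\le\|x_n\|\|A_ng\|$, factoring out $\|\{\lambda_n\}_n\|_\infty\sup_n\|x_n\|\|y_n\|$, and applying Cauchy--Schwarz to the residual sum $\sum_{n=k}^m\|B_nh\|\|A_ng\|$ yields a bound of the form
\begin{align*}
\left\|\sum_{n=k}^m\lambda_n\langle B_nh,y_n\rangle A_n^*x_n\right\| \le \|\{\lambda_n\}_n\|_\infty\sup_{n}\|x_n\|\|y_n\|\,\sqrt{b}\left(\sum_{n=k}^m\|B_nh\|^2\right)^{1/2}.
\end{align*}
Here the Bessel bound $\sum_n\|A_ng\|^2\le b$, valid uniformly over $\|g\|=1$, absorbs one factor, while the Bessel bound on $\{B_n\}_n$ guarantees $\sum_n\|B_nh\|^2<\infty$, so the remaining tail tends to $0$ as $k,m\to\infty$. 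This forces convergence, hence $T$ is well defined.

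Finally, running the identical estimate over the full index set and using $\sum_n\|B_nh\|^2\le d\|h\|^2$ gives $|\langle Th,g\rangle|\le\sqrt{bd}\,\|\{\lambda_n\}_n\|_\infty\sup_n\|x_n\|\|y_n\|\,\|h\|$ for every unit $g$, and taking the supremum over such $g$ produces exactly the claimed norm bound. The one delicate point is the order of the two Cauchy--Schwarz applications: one must first peel off the scalar weights and the vector norms $\|x_n\|,\|y_n\|$, and only then split $\sum\|B_nh\|\|A_ng\|$ so that the two Bessel inequalities can be invoked on $\{A_n\}_n$ and $\{B_n\}_n$ separately. This is the step that genuinely uses both Bessel hypotheses and is the crux of the argument.
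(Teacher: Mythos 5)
Your proposal is correct and follows essentially the same route as the paper: estimating the tails of the partial sums by duality against unit vectors $g$, peeling off $\|\{\lambda_n\}_n\|_\infty\sup_n\|x_n\|\|y_n\|$, applying Cauchy--Schwarz to $\sum_{k=n}^m\|B_kh\|\|A_kg\|$, absorbing $\sum_k\|A_kg\|^2\leq b$ uniformly in $g$, and using the Bessel bound on $\{B_n\}_n$ to make the remaining tail vanish, which yields both well-definedness and the bound $\sqrt{bd}\|\{\lambda_n\}_n\|_\infty\sup_{n}\|x_n\|\|y_n\|$. (Your stated constant is in fact the correct one; the paper's closing line contains a typo, writing $\sqrt{ab}$.)
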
 
 \begin{proof}
 Let $n,m \in \mathbb{N}$ with $n\leq m$. Then for each $h \in \mathcal{H}$,
 
 \begin{align*}
 &\left\|\sum_{k=n}^{m}\lambda_k (A^*_kx_k\otimes \overline{B^*_ky_k})h\right\|=\sup_{g\in \mathcal{H},\|g\|\leq 1}\left|\left \langle \sum_{k=n}^{m}\lambda_k (A^*_kx_k\otimes \overline{B^*_ky_k})h, g\right \rangle \right| \\
 &=\sup_{g\in \mathcal{H},\|g\|\leq 1}\left|\sum_{k=n}^{m}\lambda_k \langle h, B^*_ky_k\rangle \langle A^*_kx_k, g\rangle \right|
 \leq \sup_{g\in \mathcal{H},\|g\|\leq 1}\sum_{k=n}^{m}|\lambda_k \langle h, B^*_ky_k\rangle \langle A^*_kx_k, g\rangle |\\
 &=\sup_{g\in \mathcal{H},\|g\|\leq 1}\sum_{k=n}^{m}|\lambda_k \langle  B_kh, y_k\rangle \langle x_k, A_kg\rangle |
 \leq \sup_{g\in \mathcal{H},\|g\|\leq 1}\sum_{k=n}^{m}|\lambda_k |\|B_kh\|\|y_k\|\| x_k\|\|A_kg\|\\
 &\leq \sup_{n\in \mathbb{N}}|
 \lambda_n|\sup_{n\in \mathbb{N}}\|x_n\|\|y_n\|\sup_{g\in \mathcal{H},\|g\|\leq 1}\sum_{k=n}^{m} \|B_kh\|\|A_kg\|\\
 &\leq \sup_{n\in \mathbb{N}}|
 \lambda_n|\sup_{n\in \mathbb{N}}\|x_n\|\|y_n\|\sup_{g\in \mathcal{H},\|g\|\leq 1} \left(\sum_{k=n}^{m} \|B_kh\|^2\right)^\frac{1}{2}\left(\sum_{k=n}^{m} \|A_kg\|^2\right)^\frac{1}{2}\\
 &\leq \sup_{n\in \mathbb{N}}|
 \lambda_n|\sup_{n\in \mathbb{N}}\|x_n\|\|y_n\|\sup_{g\in \mathcal{H},\|g\|\leq 1} \left(\sum_{k=n}^{m} \|B_kh\|^2\right)^\frac{1}{2}\left(\sum_{k=n}^{\infty} \|A_kg\|^2\right)^\frac{1}{2}\\
 &\leq \sqrt{b}\sup_{n\in \mathbb{N}}|
 \lambda_n|\sup_{n\in \mathbb{N}}\|x_n\|\|y_n\|\left(\sum_{k=n}^{m} \|B_kh\|^2\right)^\frac{1}{2}\sup_{g\in \mathcal{H},\|g\|\leq 1}\|g\|\\
 &=\sqrt{b}\sup_{n\in \mathbb{N}}|
 \lambda_n|\sup_{n\in \mathbb{N}}\|x_n\|\|y_n\|\left(\sum_{k=n}^{m} \|B_kh\|^2\right)^\frac{1}{2}, 
 \end{align*}
 and  $\sum_{k=1}^{\infty}\|B_kh\|^2$ converges with $\sum_{k=1}^{\infty}\|B_kh\|^2\leq d \|h\|^2$. Hence $T$ is well-defined linear. Above calculations also show that  $\|T\|\leq \sqrt{ab}\sup_{n\in \mathbb{N}}|\lambda_n|\sup_{n\in \mathbb{N}}\|x_n\|\|y_n\|.$
 \end{proof}
 \begin{corollary}
  Let $\{A_n\}_n$ be an operator-valued orthonormal sequence in  $\mathcal{B}(\mathcal{H},\mathcal{H}_0)$, $\{B_n\}_n$ be an operator-valued Bessel sequence in  $\mathcal{B}(\mathcal{H},\mathcal{H}_0)$ with bound $b$.  If $\{\lambda_n\}_n \in \ell^\infty(\mathbb{N})$, $\{x_n\}_n$, $\{y_n\}_n$ are sequences in $\mathcal{H}_0$ such that $\{\|x_n\|\|y_n\|\}_n \in \ell^\infty(\mathbb{N})$, then the map $T: \mathcal{H} \ni h \mapsto \sum_{n=1}^{\infty}\lambda_n (A^*_nx_n\otimes \overline{B^*_ny_n})h \in \mathcal{H}$  is a well-defined bounded linear operator with norm at most $\sqrt{b}|\|\{\lambda_n\}_n\|_\infty\sup_{n\in \mathbb{N}}\|x_n\|\|y_n\|.$
 \end{corollary}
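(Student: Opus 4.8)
The plan is to recognize this corollary as an immediate specialization of Theorem \ref{DEFINITIONEXISTENCE}, so the whole argument reduces to a single observation. First I would note that the only feature of the sequence $\{A_n\}_n$ actually used in the proof of Theorem \ref{DEFINITIONEXISTENCE} is its Bessel bound, which enters through the estimate $\left(\sum_{k=n}^{\infty}\|A_kg\|^2\right)^{1/2}\leq\sqrt{b}\,\|g\|$. Hence the task is simply to convert the orthonormality hypothesis into a Bessel bound of the appropriate size.

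Concretely, by the definition of an operator-valued orthonormal sequence one has $\sum_{n=1}^{\infty}\|A_nh\|^2\leq\|h\|^2$ for every $h\in\mathcal{H}$, and this is precisely the statement that $\{A_n\}_n$ is an operator-valued Bessel sequence with bound $1$. The orthogonality relation $\langle A_n^*y,A_m^*z\rangle=\delta_{n,m}\langle y,z\rangle$ is not needed for this conclusion and can be set aside. With this identification in hand, I would apply Theorem \ref{DEFINITIONEXISTENCE} directly, taking the bound of $\{A_n\}_n$ to be $1$ and the bound of $\{B_n\}_n$ to be $b$. The theorem then yields at once that $T$ is well-defined, linear, and bounded, with $\|T\|\leq\sqrt{1\cdot b}\,\|\{\lambda_n\}_n\|_\infty\sup_{n\in\mathbb{N}}\|x_n\|\|y_n\|=\sqrt{b}\,\|\{\lambda_n\}_n\|_\infty\sup_{n\in\mathbb{N}}\|x_n\|\|y_n\|$, which is exactly the asserted bound.

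There is essentially no genuine obstacle here: the entire content of the corollary is the elementary fact that an operator-valued orthonormal sequence is an operator-valued Bessel sequence with bound $1$. The only point requiring a moment of care is the bookkeeping of the two bounds, namely confirming that the factor $\sqrt{bd}$ appearing in Theorem \ref{DEFINITIONEXISTENCE} collapses to $\sqrt{b}$; since one of the two bounds is now $1$, this is immediate.
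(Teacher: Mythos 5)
Your proof is correct, and it is the shortest possible route: an operator-valued orthonormal sequence satisfies $\sum_{n=1}^{\infty}\|A_nh\|^2\leq \|h\|^2$ by definition, hence is an operator-valued Bessel sequence with bound $1$, and Theorem \ref{DEFINITIONEXISTENCE} with bounds $1$ and $b$ gives $\sqrt{1\cdot b}=\sqrt{b}$. Interestingly, the paper explicitly acknowledges this reduction and then deliberately declines it: its proof opens with the remark that ``we shall write a direct argument using orthonormality of $A_n$'s'' and proceeds by expanding the squared norm of a partial sum, using $\langle A_k^*x_k, A_r^*x_r\rangle=\delta_{k,r}\langle x_k,x_k\rangle$ to kill all cross terms and obtain the exact identity
\begin{align*}
\left\|\sum_{k=n}^{m}\lambda_k (A^*_kx_k\otimes \overline{B^*_ky_k})h\right\|^2=\sum_{k=n}^{m}|\lambda_k|^2|\langle h, B^*_ky_k\rangle|^2\|x_k\|^2,
\end{align*}
after which the Cauchy criterion follows from the Bessel bound of $\{B_n\}_n$ alone. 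The two approaches trade off as follows: yours buys brevity and makes transparent that orthogonality of the $A_n$'s is irrelevant here (only the bound-$1$ Bessel property matters), while the paper's direct computation buys an exact formula for the partial sums --- avoiding the duality argument ($\sup_{\|g\|\leq 1}$) and Cauchy--Schwarz step of the general theorem --- and mirrors Schatten's original computation for orthonormal sequences recalled in the introduction. Both arguments yield the same norm bound $\sqrt{b}\,\|\{\lambda_n\}_n\|_\infty\sup_{n\in\mathbb{N}}\|x_n\|\|y_n\|$, so there is no gap in your proposal.
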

 \begin{proof}
 Even though this is a Corollary of Theorem  \ref{DEFINITIONEXISTENCE}, we shall write a direct argument using orthonormality of $A_n$'s. 
  Let $n,m \in \mathbb{N}$ with $n\leq m$ and $h \in \mathcal{H}$. Consider
  
 \begin{align*}
 &\left\|\sum_{k=n}^{m}\lambda_k (A^*_kx_k\otimes \overline{B^*_ky_k})h\right\|^2=\left\langle \sum_{k=n}^{m}\lambda_k (A^*_kx_k\otimes \overline{B^*_ky_k})h,\sum_{r=n}^{m}\lambda_r (A^*_rx_r\otimes \overline{B^*_ry_r})h \right \rangle \\
 &=\left\langle\sum_{k=n}^{m}\lambda_k\langle h, B^*_ky_k\rangle A^*_kx_k , \sum_{r=n}^{m}\lambda_r\langle h, B^*_ry_r\rangle A^*_rx_r \right \rangle
 = \sum_{k=n}^{m}\lambda_k\langle h, B^*_ky_k\rangle\sum_{r=n}^{m}\overline{\lambda_r}\langle  B^*_ry_r,h\rangle \langle A_k^*x_k, A_r^*x_r\rangle \\
 &= \sum_{k=n}^{m}\lambda_k\langle h, B^*_ky_k\rangle\overline{\lambda_k}\langle  B^*_ky_k,h\rangle\langle x_k, x_k\rangle
 =\sum_{k=n}^{m}|\lambda_k|^2|\langle h, B^*_ky_k\rangle|^2\|x_k\|^2 \\
 &\leq \sup_{n\in \mathbb{N}}|\lambda_n|^2\sum_{k=n}^{m}|\langle h, B^*_ky_k\rangle|^2\|x_k\|^2
 =\sup_{n\in \mathbb{N}}|\lambda_n|^2\sum_{k=n}^{m}|\langle  B_kh,y_k\rangle|^2\|x_k\|^2\\
 &\leq \sup_{n\in \mathbb{N}}|\lambda_n|^2\sum_{k=n}^{m}\|  B_kh\|^2\|y_k\|^2\|x_k\|^2\leq  \sup_{n\in \mathbb{N}}|\lambda_n|^2\sup_{n\in \mathbb{N}}\|x_n\|^2\|y_n\|^2\sum_{k=n}^{m}\|  B_kh\|^2,
 \end{align*}
the last sum converges.
 \end{proof} 
  \begin{corollary}\label{COROLLARYFIRST}
 Theorem \ref{DEFINITIONEXISTENCE}  holds by replacing the condition  $\{\|x_n\|\|y_n\|\}_n \in \ell^\infty(\mathbb{N})$ with the condition $\{\|x_n\|\}_n, \{\|y_n\|\}_n \in \ell^\infty(\mathbb{N})$.
  \end{corollary}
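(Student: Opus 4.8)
The plan is to observe that the new hypothesis is in fact stronger than the hypothesis of Theorem \ref{DEFINITIONEXISTENCE}, so that the corollary follows by immediate reduction to that theorem rather than by any fresh estimate. Concretely, I would argue that whenever $\{\|x_n\|\}_n$ and $\{\|y_n\|\}_n$ both lie in $\ell^\infty(\mathbb{N})$, their pointwise product $\{\|x_n\|\|y_n\|\}_n$ also lies in $\ell^\infty(\mathbb{N})$. Indeed, for every $n \in \mathbb{N}$ one has $\|x_n\|\|y_n\| \leq (\sup_{k\in\mathbb{N}}\|x_k\|)(\sup_{k\in\mathbb{N}}\|y_k\|)$, where the right-hand side is finite and independent of $n$; taking the supremum over $n$ yields
\begin{align*}
\sup_{n\in\mathbb{N}}\|x_n\|\|y_n\| \leq \Big(\sup_{n\in\mathbb{N}}\|x_n\|\Big)\Big(\sup_{n\in\mathbb{N}}\|y_n\|\Big) < \infty.
\end{align*}

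With this observation in hand, the sequence $\{\|x_n\|\|y_n\|\}_n$ satisfies exactly the hypothesis required by Theorem \ref{DEFINITIONEXISTENCE}. Applying that theorem verbatim then shows that the map $T$ is a well-defined bounded linear operator, and the norm bound $\sqrt{bd}\,\|\{\lambda_n\}_n\|_\infty\sup_{n\in\mathbb{N}}\|x_n\|\|y_n\|$ continues to hold. If one prefers a bound phrased purely in terms of the two new suprema, the displayed inequality above can be used to further estimate the norm by $\sqrt{bd}\,\|\{\lambda_n\}_n\|_\infty\big(\sup_{n\in\mathbb{N}}\|x_n\|\big)\big(\sup_{n\in\mathbb{N}}\|y_n\|\big)$.

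There is essentially no obstacle in this argument: the entire content reduces to the elementary fact that the product of two bounded scalar sequences is again bounded. The only point worth flagging is that the second, product-of-suprema form of the estimate is a priori weaker, since $\sup_{n}\|x_n\|\|y_n\|$ may be strictly smaller than $(\sup_{n}\|x_n\|)(\sup_{n}\|y_n\|)$; I would therefore retain the sharper constant $\sup_{n}\|x_n\|\|y_n\|$ inherited directly from Theorem \ref{DEFINITIONEXISTENCE} and mention the coarser bound only as an optional consequence.
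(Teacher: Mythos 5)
Your proposal is correct and matches the paper's (implicit) reasoning: the corollary is stated without a separate proof precisely because, as you note, $\{\|x_n\|\}_n, \{\|y_n\|\}_n \in \ell^\infty(\mathbb{N})$ forces $\{\|x_n\|\|y_n\|\}_n \in \ell^\infty(\mathbb{N})$, so Theorem \ref{DEFINITIONEXISTENCE} applies verbatim. Your remark about retaining the sharper constant $\sup_{n\in\mathbb{N}}\|x_n\|\|y_n\|$ rather than $\bigl(\sup_{n\in\mathbb{N}}\|x_n\|\bigr)\bigl(\sup_{n\in\mathbb{N}}\|y_n\|\bigr)$ is a sound observation consistent with the paper, which only invokes the coarser product bound in its alternative derivation via Theorem 6.1 of Balazs.
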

  \begin{remark}
  Corollary \ref{COROLLARYFIRST} can also be derived by using Theorem 6.1 in \cite{BALAZS3}. In fact, if $\{\|x_n\|\}_n, \{\|y_n\|\}_n $ $ \in \ell^\infty(\mathbb{N})$, then we observe that both $\{A_n^*x_n\}_n$, $ \{B_n^*y_n\}_n$ are Bessel sequences. For, $\sum_{n=1}^{\infty}|\langle h, A_n^*x_n\rangle |^2=\sum_{n=1}^{\infty}|\langle A_nh, x_n\rangle |^2\leq \sum_{n=1}^{\infty}\|A_nh\|^2\|x_n\|^2\leq \sup_{n\in \mathbb{N}}\|x_n\|^2\sum_{n=1}^{\infty}\|A_nh\|^2\leq a\sup_{n\in \mathbb{N}}\|x_n\|^2\|h\|^2$. Similarly  $\sum_{n=1}^{\infty}|\langle h, B_n^*y_n\rangle |^2\leq b\sup_{n\in \mathbb{N}}\|y_n\|^2\|h\|^2, \forall h \in \mathcal{H}$. Now \text{\upshape(i)} in Theorem 6.1 in \cite{BALAZS3} says that $T$ is a well-defined bounded linear operator with  $\|T\|\leq \sqrt{ab}\|\{\lambda_n\}_n\|_\infty\sup_{n\in \mathbb{N}}\|x_n\|\sup_{n\in \mathbb{N}}\|y_n\|.$ 
 \end{remark}
  A partial converse of Theorem \ref{DEFINITIONEXISTENCE} is given in Theorem \ref{CHARACTERIZATIONRESULTORTHO} (which extends Theorem 1 of Chapter 1 in \cite{SCHATTEN}).
 \begin{theorem}\label{CHARACTERIZATIONRESULTORTHO}
  Let $\{A_n\}_n$, $\{B_n\}_n$ be  operator-valued orthonormal sequences in  $\mathcal{B}(\mathcal{H},\mathcal{H}_0)$, 	$\{x_n\}_n$, $\{y_n\}_n$ be   sequences in $\mathcal{H}_0$ such that $\{\|x_n\|\|y_n\|\}_n \in \ell^\infty(\mathbb{N})$, $\inf_{n\in \mathbb{N}}\|x_n\|\|y_n\|>0$, and let $\{\lambda_n\}_n$ be  a sequence of scalars. Then the family 
  \begin{align*}
  \{\lambda_n\langle h, B_n^*y_n\rangle A^*_nx_n\}_n
  \end{align*}
  is summable for every $h \in \mathcal{H}$ if and only if $\{\lambda_n\}_n$ is bounded. Whenever $\{\lambda_n\}_n$ is bounded, the map $ \mathcal{H} \ni h \mapsto \sum_{n=1}^{\infty}\lambda_n (A^*_nx_n\otimes \overline{B^*_ny_n})h \in \mathcal{H}$ is a well-defined bounded linear operator with norm at most $\|\{\lambda_n\}_n\|_\infty\sup_{n\in \mathbb{N}}\|x_n\|\|y_n\|.$
 \end{theorem}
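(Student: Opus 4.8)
The plan is to split the stated equivalence into its two implications and to dispatch the norm estimate together with the easy implication, the whole argument resting on the orthogonality relations encoded in the definition of an operator-valued orthonormal sequence. First I would record that $\langle A_n^*y, A_m^*z\rangle = \delta_{n,m}\langle y,z\rangle$ is equivalent to $A_m A_n^* = \delta_{n,m} I_{\mathcal{H}_0}$, and likewise $B_m B_n^* = \delta_{n,m} I_{\mathcal{H}_0}$; in particular $\{A_n^*x_n\}_n$ is an orthogonal family with $\|A_n^*x_n\| = \|x_n\|$, so that for each fixed $h$ the summands $v_n \coloneqq \lambda_n \langle h, B_n^*y_n\rangle A_n^*x_n$ are mutually orthogonal.

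For the implication that bounded $\{\lambda_n\}_n$ forces summability, I would invoke that an orthogonal family in a Hilbert space is summable exactly when the sum of the squared norms of its terms is finite. Since $\|v_n\|^2 = |\lambda_n|^2|\langle B_nh, y_n\rangle|^2\|x_n\|^2$ and $|\langle B_n h, y_n\rangle| \le \|B_nh\|\|y_n\|$, the factor $\|x_n\|^2\|y_n\|^2$ is controlled by its supremum, while $\sum_n \|B_n h\|^2 \le \|h\|^2$ because the orthonormal $B$-sequence is Bessel with bound $1$. This one computation yields summability, well-definedness of the limiting operator $T$, and the estimate $\|T\| \le \|\{\lambda_n\}_n\|_\infty \sup_{n}\|x_n\|\|y_n\|$ promised in the statement; note it uses neither $\inf_n \|x_n\|\|y_n\| > 0$.

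The substantive direction is the converse, where pointwise summability for every $h$ must force $\{\lambda_n\}_n$ to be bounded. Summability furnishes a well-defined linear map $T$ and finite-rank partial-sum operators $T_N$ with $T_N h \to Th$ for every $h$, so $\sup_N \|T_N h\| < \infty$ pointwise; the uniform boundedness principle then upgrades this to $M \coloneqq \sup_N \|T_N\| < \infty$, whence $T$ is bounded with $\|T\| \le M$. To isolate each $\lambda_m$ I would test against $h = B_m^*y_m$: the relation $B_n B_m^* = \delta_{n,m} I_{\mathcal{H}_0}$ collapses the series to $Th = \lambda_m \|y_m\|^2 A_m^*x_m$, giving $\|Th\| = |\lambda_m|\,\|y_m\|^2\|x_m\|$ and $\|h\| = \|y_m\|$. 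The inequality $\|Th\| \le \|T\|\,\|h\|$ then reads $|\lambda_m|\,\|x_m\|\|y_m\| \le \|T\|$, and dividing by $\|x_m\|\|y_m\| \ge \inf_n \|x_n\|\|y_n\| \eqqcolon c > 0$ gives $|\lambda_m| \le \|T\|/c$ uniformly in $m$.

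I expect the converse to be the main obstacle: the challenge is to convert a qualitative, $h$-by-$h$ summability hypothesis into a single uniform numerical bound on the scalars. The uniform boundedness principle is the tool that bridges this gap, and the hypothesis $\inf_n\|x_n\|\|y_n\| > 0$ enters exactly once but indispensably, to pass from a bound on $|\lambda_m|\,\|x_m\|\|y_m\|$ to one on $|\lambda_m|$ alone; without it the scalars could be unbounded while $\|x_m\|\|y_m\| \to 0$ absorbs the growth.
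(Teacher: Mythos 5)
Your proposal is correct and follows essentially the same route as the paper: the easy direction rests on the orthogonality of $\{A_n^*x_n\}_n$ together with the Bessel bound $1$ for $\{B_n\}_n$ (the paper simply cites its general multiplier existence theorem with $a=b=1$, which amounts to the same estimate), and the converse uses the uniform boundedness principle on the finite-rank partial sums followed by testing on the vectors $B_m^*y_m$, exactly the paper's mechanism. The only organizational difference is that you argue the converse directly, bounding every $|\lambda_m|$ by $\|T\|/\inf_n\|x_n\|\|y_n\|$, whereas the paper proceeds by contradiction along an unbounded subsequence $|\lambda_{n_k}|\geq k$, which obliges it to verify convergence of the subsequence series by comparison with the full one --- a step your direct version cleanly avoids.
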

 \begin{proof}
 $(\Leftarrow)$ Follows from Theorem \ref{DEFINITIONEXISTENCE} (since an orthonormal operator-valued sequence is an operator-valued Bessel sequence). Note that in this case we can take $a=b=1$.
 
 $(\Rightarrow)$ Let us suppose that $\{\lambda_n\}_n$ is not bounded. Then we can extract a subsequence $\{\lambda_{n_k}\}_{k=1}^\infty$ from $\{\lambda_n\}_n$ such that $|\lambda_{n_k}|\geq k, \forall k \in \mathbb{N}$. For each $m\in \mathbb{N}$, define $ T_m: \mathcal{H} \ni h \mapsto \sum_{k=1}^{m}\lambda_{n_k}\langle h, B^*_{n_k}y_{n_k} \rangle A^*_{n_k}x_{n_k}   \in \mathcal{H}$. Also define $ T: \mathcal{H} \ni h \mapsto \sum_{k=1}^{\infty}\lambda_{n_k}\langle h, B^*_{n_k}y_{n_k} \rangle A^*_{n_k}x_{n_k}   \in \mathcal{H}$. Then for all $r,s \in \mathbb{N}$, $r\leq s$,  since $n_k\geq k , \forall k \in \mathbb{N}$, 
 
 \begin{align*}
 \left\| \sum_{k=r}^s \lambda_{n_k}\langle h, B^*_{n_k}y_{n_k} \rangle A^*_{n_k}x_{n_k}\right\|^2&=\left\langle \sum_{k=r}^s \lambda_{n_k}\langle h, B^*_{n_k}y_{n_k} \rangle A^*_{n_k}x_{n_k}, \sum_{j=r}^s \lambda_{n_j}\langle h, B^*_{n_j}y_{n_j} \rangle A^*_{n_l}x_{n_l}\right \rangle \\
  &=\sum_{k=r}^{s}|\lambda_{n_k}|^2|\langle h, B^*_{n_k}y_{n_k} \rangle|^2 \|x_{n_k}\|^2 \leq  \sum_{k=r}^{s}|\lambda_{k}|^2|\langle h, B^*_{k}y_{k} \rangle|^2 \|x_{k}\|^2\\
  &=\left\|\sum_{k=r}^{s}\lambda_{k}\langle h, B^*_{k}y_{k} \rangle A^*_{k}x_{k} \right\|^2
 \end{align*}
 which is convergent (by assumption). Therefore $T$ is well-defined bounded linear operator. Further, for each fixed $h\in \mathcal{H}$, 
 \begin{align*}
 \|T_mh-Th\|^2&=\left\|\sum_{k=1}^{m}\lambda_{n_k}\langle h, B^*_{n_k}y_{n_k} \rangle A^*_{n_k}x_{n_k}-\sum_{k=1}^{\infty}\lambda_{n_k}\langle h, B^*_{n_k}y_{n_k} \rangle A^*_{n_k}x_{n_k} \right\|^2\\
 &= \left\|\sum_{k=m+1}^{\infty}\lambda_{n_k}\langle h, B^*_{n_k}y_{n_k} \rangle A^*_{n_k}x_{n_k} \right\|^2= \sum_{k=m+1}^{\infty}|\lambda_{n_k}|^2|\langle h, B^*_{n_k}y_{n_k} \rangle|^2 \|x_{n_k}\|^2\\
 &\leq  \sum_{k=m+1}^{\infty}|\lambda_{k}|^2|\langle h, B^*_{k}y_{k} \rangle|^2 \|x_{k}\|^2 =\left\|\sum_{k=m+1}^{\infty}\lambda_{k}\langle h, B^*_{k}y_{k} \rangle A^*_{k}x_{k} \right\|^2\rightarrow 0 \text{ as } m \rightarrow \infty .
 \end{align*} 
 Hence $T_m \rightarrow T$ pointwise which says that $\{T_m\}_{m=1}^\infty$ is bounded pointwise. Now Uniform Boundedness Principle says that there exists $R>0$ such that $\sup_{m \in \mathbb{N}}\|T_m\|\leq R$. Next, for each $m \in \mathbb{N}$, using orthonormality of $B_n$'s,
 
 \begin{align*}
 T_m\left(\frac{B^*_{n_m}y_{n_m}}{\|y_{n_m}\|}\right)&=\sum_{k=1}^{m}\lambda_{n_k}\left\langle \frac{B^*_{n_m}y_{n_m}}{\|y_{n_m}\|}, B^*_{n_k}y_{n_k} \right\rangle A^*_{n_k}x_{n_k} \\
 &=\lambda_{n_m}\|y_{n_m}\|A^*_{n_m}x_{n_m}
 \end{align*} 
 (the condition $\inf_{n\in \mathbb{N}}\|x_n\|\|y_n\|>0$ says that none of the $y_n$'s equals zero). Previous equation along with the observation 
\begin{align*}
\left\|\frac{B^*_{n_m}y_{n_m}}{\|y_{n_m}\|}\right\|\leq \frac{\|B^*_{n_m}\|\|y_{n_m}\|}{\|y_{n_m}\|}=\frac{1.\|y_{n_m}\|}{\|y_{n_m}\|}=1
\end{align*}
gives $R\geq\sup_{m \in \mathbb{N}}\|T_m\|\geq \|T_m\|\geq|\lambda_{n_m}|\|y_{n_m}\|\|A^*_{n_m}x_{n_m}\| =|\lambda_{n_m}|\|y_{n_m}\|\|x_{n_m}\| \geq m\|y_{n_m}\|\|x_{n_m}\|\geq m\inf_{n\in \mathbb{N}}\|y_n\|\|x_n\|$, $\forall m \in \mathbb{N} $ $\Rightarrow $ $m \leq \frac{R}{\inf_{n\in \mathbb{N}}\|y_n\|\|x_n\|}$, $\forall m \in \mathbb{N}$ which is a contradiction.
\end{proof}
\begin{corollary}
Theorem \ref{CHARACTERIZATIONRESULTORTHO}  holds by replacing the condition  $\inf_{n\in \mathbb{N}}\|x_n\|\|y_n\|>0$ with one of the following conditions.
\begin{enumerate}[\upshape(i)]
\item $\inf_{n\in \mathbb{N}}|\langle x_n,y_n\rangle| >0$.
\item $\inf_{n\in \mathbb{N}}\|x_n\|\inf_{n\in \mathbb{N}}\|y_n\|>0.$
\end{enumerate} 
\end{corollary}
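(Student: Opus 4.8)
The plan is to reduce both cases to Theorem \ref{CHARACTERIZATIONRESULTORTHO} itself, by showing that each of the replacement conditions (i) and (ii) is in fact \emph{stronger} than the hypothesis $\inf_{n\in \mathbb{N}}\|x_n\|\|y_n\|>0$ appearing there. Once this is established, no step of the proof of Theorem \ref{CHARACTERIZATIONRESULTORTHO} needs to be rewritten. Indeed, the $(\Leftarrow)$ direction never uses any lower bound at all (it rests only on Theorem \ref{DEFINITIONEXISTENCE}, which requires merely $\{\|x_n\|\|y_n\|\}_n \in \ell^\infty(\mathbb{N})$), while the $(\Rightarrow)$ direction invokes the lower bound only at two points: first to guarantee that $y_n\neq 0$ for every $n$ (so that the vector $B_n^*y_n/\|y_n\|$ is well-defined), and second to extract the final contradiction $m\leq R/\inf_{n\in \mathbb{N}}\|x_n\|\|y_n\|$. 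Both of these survive verbatim as soon as $\inf_{n\in \mathbb{N}}\|x_n\|\|y_n\|>0$ is re-derived from (i) or (ii).

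For case (i), I would apply the Cauchy--Schwarz inequality $|\langle x_n,y_n\rangle|\leq \|x_n\|\|y_n\|$, which holds for every $n$, and then take the infimum to obtain $\inf_{n\in \mathbb{N}}\|x_n\|\|y_n\|\geq \inf_{n\in \mathbb{N}}|\langle x_n,y_n\rangle|>0$; moreover $\inf_{n\in \mathbb{N}}|\langle x_n,y_n\rangle|>0$ forces $\langle x_n,y_n\rangle\neq 0$, hence $y_n\neq 0$, for all $n$. For case (ii), I would instead use the termwise bound $\|x_n\|\|y_n\|\geq \left(\inf_{m\in \mathbb{N}}\|x_m\|\right)\left(\inf_{m\in \mathbb{N}}\|y_m\|\right)$, valid since $\|x_n\|$ and $\|y_n\|$ each dominate their own infimum; taking the infimum over $n$ yields $\inf_{n\in \mathbb{N}}\|x_n\|\|y_n\|\geq \left(\inf_{m\in \mathbb{N}}\|x_m\|\right)\left(\inf_{m\in \mathbb{N}}\|y_m\|\right)>0$, and positivity of this product forces $\inf_{m\in \mathbb{N}}\|y_m\|>0$, so again $y_n\neq 0$ for all $n$. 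In either case the original hypothesis is recovered, and I would close by citing Theorem \ref{CHARACTERIZATIONRESULTORTHO} directly.

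There is no genuine obstacle here: the entire content is the observation that both $|\langle x_n,y_n\rangle|$ and $\left(\inf_m\|x_m\|\right)\left(\inf_m\|y_m\|\right)$ serve as lower bounds for $\|x_n\|\|y_n\|$ — the former pointwise by Cauchy--Schwarz, the latter uniformly — so that a positive infimum of either quantity propagates to a positive value of $\inf_{n\in \mathbb{N}}\|x_n\|\|y_n\|$. The only point I would state explicitly is that (i) and (ii) are strictly stronger than, rather than equivalent to, the original condition (one checks, for instance, that orthogonal pairs $x_n\perp y_n$ of fixed norm satisfy the original hypothesis but violate (i), and that $\|x_n\|=n$, $\|y_n\|=1/n$ violates (ii)), so the corollary records convenient sufficient conditions rather than a strengthening of the theorem.
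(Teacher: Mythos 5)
Your proposal is correct and matches the paper's own proof: the paper likewise reduces to Theorem \ref{CHARACTERIZATIONRESULTORTHO} by noting $\inf_{n\in \mathbb{N}}\|x_n\|\|y_n\|\geq\inf_{n\in \mathbb{N}}|\langle x_n,y_n\rangle|>0$ (Cauchy--Schwarz) and $\inf_{n\in \mathbb{N}}\|x_n\|\|y_n\|\geq\inf_{n\in \mathbb{N}}\|x_n\|\inf_{n\in \mathbb{N}}\|y_n\|>0$. Your extra observations (where the lower bound actually enters the theorem's proof, and that (i) and (ii) are strictly stronger conditions) are correct but go beyond the paper's terse two-line argument.
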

 \begin{proof}
 We apply Theorem \ref{CHARACTERIZATIONRESULTORTHO} by noting 
 $$ \inf_{n\in \mathbb{N}}\|x_n\|\|y_n\|\geq\inf_{n\in \mathbb{N}}|\langle x_n,y_n\rangle| >0,$$ 
  $$ \inf_{n\in \mathbb{N}}\|x_n\|\|y_n\|\geq\inf_{n\in \mathbb{N}}\|x_n\|\inf_{n\in \mathbb{N}}\|y_n\|>0.$$
 \end{proof}

 \begin{definition}\label{DEFINITION}
 Let $\{A_n\}_n$ and $\{B_n\}_n$ be  operator-valued Bessel sequences in   $\mathcal{B}(\mathcal{H},\mathcal{H}_0)$ and  let $\{x_n\}_n$, $\{y_n\}_n$ be   sequences in $\mathcal{H}_0$ such that $\{\|x_n\|\|y_n\|\}_n \in \ell^\infty(\mathbb{N})$. For $\{\lambda_n\}_n \in \ell^\infty(\mathbb{N})$, the multiplier for $\{A_n\}_n$ and  $\{B_n\}_n$ is defined as  the operator
 \begin{align}\label{GENERALOPERATOR}
 M_{\lambda,A,B,x,y}\coloneqq \sum_{n=1}^{\infty}\lambda_n (A^*_nx_n\otimes \overline{B^*_ny_n}).
 \end{align}
 \end{definition} 
 \begin{remark}
 Let $\mathcal{H}_0=\mathbb{K}$ and $\{e_n\}_n$, $\{f_n\}_n$ be Bessel (resp. orthonormal) sequences in $\mathcal{H}$. Define $x_n\coloneqq1, y_n\coloneqq1, A_n: \mathcal{H} \ni h \mapsto \langle h, e_n\rangle \in \mathbb{K}, B_n: \mathcal{H} \ni h \mapsto \langle h, f_n\rangle \in \mathbb{K}, \forall n \in \mathbb{N}$. Then  $\sum_{n=1}^{\infty}\lambda_n (A^*_nx_n\otimes \overline{B^*_ny_n})=\sum_{n=1}^{\infty}\lambda_n (e_n\otimes \overline{f_n})$. Thus Definition    \ref{DEFINITION} reduces to the operator of the form $\sum_{n=1}^{\infty}\lambda_n (e_n\otimes \overline{f_n})$, considered by Balazs \cite{BALAZS3} (resp.  Schatten and von Neumann \cite{SCHATTEN}).
 \end{remark}
 Following theorem collects various properties of $M_{\lambda,A,B,x,y}$.
 \begin{theorem}\label{PROPERTIESOFM}
 Let  $\{A_n\}_n$, $\{B_n\}_n$, $\{C_n\}_n$, $\{D_n\}_n$ be  operator-valued Bessel sequences in  $\mathcal{B}(\mathcal{H},\mathcal{H}_0)$ with bounds $a, b,c, d$, respectively, $\{\lambda_n\}_n, \{\mu_n\}_n \in \ell^\infty(\mathbb{N})$, $\alpha \in \mathbb{K}$  and let $\{x_n\}_n$,  $\{y_n\}_n$, $\{z_n\}_n$, $\{v_n\}_n$, $\{z_n\}_n$ be sequences in  $ \mathcal{H}_0$ such that $\{\|x_n\|\|y_n\|\}_n$, $\{\|y_n\|\|z_n\|\}_n$, $\{\|x_n\|\|z_n\|\}_n$,  $\{\|z_n\|\|v_n\|\}_n \in \ell^\infty(\mathbb{N})$. Then
 \begin{enumerate}[\upshape(i)]
 \item   $M_{\lambda,A,B, x,y}^*=M_{\overline{\lambda},B,A, y,x}$, where $\overline{\lambda}\coloneqq\{\overline{\lambda_n}\}_n$. In particular, if $\lambda$ is real valued, then $M_{\lambda,A,A, x,x}$ is self-adjoint.
 \item If  $\{\lambda _n\|y_n\|\}_n \in \ell^\infty(\mathbb{N})$, $\{A_n\}_n$ is orthonormal and  $\{B_n\}_n$ is orthogonal, then  $M_{\lambda,A,B,x,y}M_{\lambda,A,B,x,y}^*=M_{\mu,A,A,x,x} $, where $\mu\coloneqq\{|\lambda _n|^2\|B_n^*y_n\|^2\}_n$. In this case, if $x_n \neq0, \forall n \in \mathbb{N}$, then  $(M_{\lambda,A,B,x,y}M_{\lambda,A,B,x,y}^*)^{1/2}$ $=M_{\sqrt{\mu},A,A,x,x} $, where $\sqrt{\mu}\coloneqq\{|\lambda _n|\frac{\|B_n^*y_n\|}{\|x_n\|}\}_n$.
 \item If  $\{\lambda _n\|x_n\|\}_n \in \ell^\infty(\mathbb{N})$, $\{A_n\}_n$ is orthogonal and $\{B_n\}_n$ is orthonormal, then  $M_{\lambda,A,B,x,y}^*M_{\lambda,A,B,x,y}=M_{\gamma,B,B,y,y} $, where $\gamma\coloneqq\{|\lambda _n|^2\|A_n^*x_n\|^2\}_n$. In this case, if $y_n \neq0, \forall n \in \mathbb{N}$, then  $(M_{\lambda,A,B,x,y}^*M_{\lambda,A,B,x,y})^{1/2}$ $=M_{\sqrt{\gamma},B,B,y,y} $, where $\sqrt{\gamma}\coloneqq\{|\lambda _n|\frac{\|A_n^*x_n\|}{\|y_n\|}\}_n$.
 \item If $\langle A^*_kx_k, B_n^*y_n\rangle =0 , \forall k, n \in \mathbb{N}$ with $ k\neq n$, then for all $k \in \mathbb{N}$,
 \begin{align*}
 M_{\lambda,A,B,x,y}^k=\sum_{n=1}^{\infty}\lambda_n^k\langle A_n^*x_n, B_n^*y_n\rangle ^{k-1} (A^*_nx_n\otimes \overline{B^*_ny_n}).
 \end{align*}
 In particular, if $\{A_n\}_n$ is orthogonal, then  $M_{\lambda,A,A,x,y}^k=\sum_{n=1}^{\infty}\lambda_n^k\langle A_n^*x_n, A_n^*y_n\rangle ^{k-1} (A^*_nx_n\otimes \overline{A^*_ny_n}), \forall k \in \mathbb{N}.$
 \item $ M_{\alpha\lambda,A,B,x,y}= M_{\lambda,A,B,\alpha x,y}= M_{\lambda,A,\alpha B,x,y}=\alpha M_{\lambda,A,B,x,y}$,  $M_{\lambda,\alpha A,B,x,y}=M_{\lambda,A,B,x,\alpha y}=\overline{\alpha}M_{\lambda,A,B,x,y} $. 
 \item $ M_{\lambda+\mu,A,B,x,y}= M_{\lambda,A,B,x,y}+ M_{\mu,A,B,x,y}$. 
 \item $ M_{\lambda,A+C, B,x,y}= M_{\lambda,A,B,x,y}+ M_{\lambda,C,B,x,y}$.
 \item $ M_{\lambda,A,B+C,x,y}= M_{\lambda,A,B,x,y}+ M_{\lambda,A,C,x,y}$.
  \item $ M_{\lambda,A,B,x+y,z}= M_{\lambda,A,B,x,z}+ M_{\mu,A,B,y,z}$.
  \item $ M_{\lambda,A,B,x,y+z}= M_{\lambda,A,B,x,y}+ M_{\mu,A,B,x,z}$.
  \item If $\{A_n\}_n$ is orthogonal, then $\|M_{\lambda\mu,A,B,x,y}\|\leq \min\{\sup_{n\in \mathbb{N}}|\lambda_n|\|M_{\mu,A,B,x,y}\|,\sup_{n\in \mathbb{N}}|\mu_n|\|M_{\lambda,A,B,x,y}\| \}$, where $\lambda \mu\coloneqq\{\lambda_n\mu_n\}_n$.
 \item (Symbolic calculus) If $\langle A^*_kx_k, B_n^*y_n\rangle =0 , \forall k, n \in \mathbb{N}$ with $ k\neq n$, then $M_{\lambda,A,B,x,y}M_{\mu,A,B,x,y}= M_{\nu,A,B,x,y}$, where $\nu \coloneqq \{\lambda_n\mu_n\langle A_n^*x_n, B_n^*y_n\rangle \}_n$. Moreover, if  $A_n^*x_n=x,B_n^*y_n=y,\forall n \in \mathbb{N}$, then $M_{\lambda,A,B,x,y}M_{\mu,A,B,x,y}=\langle x, y\rangle M_{\lambda\mu,A,B,x,y}.$ In particular, if $\langle x, y\rangle =1$, then $ M_{\lambda,A,B,x,y}M_{\mu,A,B,x,y}=M_{\lambda\mu,A,B,x,y}.$
 \item If $\{A_n\}_n$ is orthogonal, then $M_{\lambda,A,A, x,x}$ is normal.
\item If  $\{T_n\}_n$ in $\mathcal{B}(\mathcal{H}_0)$ is such that $\sup_{n\in \mathbb{N}}\|T_n\|< \infty$, then $ M_{\lambda,A,TB,x,y}= M_{\lambda,A,B,x,T^*y}$, where $T^*y\coloneqq\{T_n^*y_n\}_n$. 
  \item If $S \in \mathcal{B}(\mathcal{H})$, then $M_{\lambda,A,BS,x,y}=M_{\lambda,A,B,x,y}S.$
  \item If $\{T_n\}_n$ in $\mathcal{B}(\mathcal{H}_0)$ is such that $\sup_{n\in \mathbb{N}}\|T_n\|< \infty$, then $ M_{\lambda,TA,B,x,y}= M_{\lambda,A,B,T^*x,y}$, where $T^*y\coloneqq\{T_n^*x_n\}_n$. 
 \item If $S \in \mathcal{B}(\mathcal{H})$, then $M_{\lambda,AS,B,x,y}=S^*M_{\lambda,A,B,x,y}$.
\item If   $\{T_n\}_n$ in $\mathcal{B}(\mathcal{H}_0)$ is such that $\sup_{n\in \mathbb{N}}\|T_n\|< \infty$, then $ M_{\lambda,A,B,x,Ty}= M_{\lambda,A,T^*B,x,y}$, where $T^*B\coloneqq\{T_n^*B\}_n$.
 \item If  $\{T_n\}_n$ in $\mathcal{B}(\mathcal{H}_0)$ is such that $\sup_{n\in \mathbb{N}}\|T_n\|< \infty$, then $ M_{\lambda,A,B,Tx,y}= M_{\lambda,T^*A,B,x,y}$.
 \item If $\langle C_k^*z_k,B_n^*y_n \rangle=0,\forall k, n \in \mathbb{N}$ with $ k\neq n$, then $M_{\lambda,A,B,x,y}M_{\mu,C,D,z,v}=M_{\lambda\mu\langle Cz, Dy\rangle,A,D,x,v}$, where $\lambda\mu\langle Cz, Dy\rangle\coloneqq\{\lambda_n\mu_n\langle C_n^*z_n, D_n^*y_n\rangle\}$. In particular, if $\{B_n\}_n$ is orthogonal, then $M_{\lambda,A,B,x,y}M_{\mu,B,D,z,v}$ $=M_{\lambda\mu\langle Bz, Dy\rangle,A,D,x,v}$.
 \end{enumerate}
 \end{theorem}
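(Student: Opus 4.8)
The whole theorem rests on two elementary identities for the rank-one operators, which I would record first: for $a,b,c,d\in\mathcal{H}$ one has $(a\otimes\overline{b})^*=b\otimes\overline{a}$ and $(a\otimes\overline{b})(c\otimes\overline{d})=\langle c,b\rangle\,(a\otimes\overline{d})$, both being one-line verifications from $(a\otimes\overline{b})h=\langle h,b\rangle a$. Combined with the linearity of $a\mapsto a\otimes\overline{b}$, the conjugate-linearity of $b\mapsto a\otimes\overline{b}$, and the adjoint rules $A_n^*(\alpha x_n)=\alpha A_n^*x_n$, $(A_nS)^*=S^*A_n^*$, $(T_nA_n)^*=A_n^*T_n^*$, these drive every item. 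Rather than manipulate the doubly-infinite series formally, I would always evaluate the operators on a vector $h\in\mathcal{H}$ and use that each defining series converges in norm (Theorem \ref{DEFINITIONEXISTENCE}), so the inner product passes through the sum by continuity; this is the device that makes every product rigorous and is, in my view, the only genuine analytic subtlety.

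I would split the items into four groups. The \emph{algebraic identities} (v)--(x) and (xiv)--(xix) are immediate from the two rank-one identities and (conjugate-)linearity; for the intertwining relations one merely pushes the adjoint of $T_n$ or $S$ from one slot to the other, via $(T_nB_n)^*y_n=B_n^*T_n^*y_n$ and the like, after noting that $\{T_nB_n\}_n$, $\{B_nS\}_n$, etc.\ remain operator-valued Bessel sequences because $\sup_n\|T_n\|<\infty$ (resp.\ $S$ is bounded), so that Theorem \ref{DEFINITIONEXISTENCE} guarantees existence. The \emph{adjoint} (i) follows by computing $\langle M_{\lambda,A,B,x,y}h,g\rangle$ and $\langle h,M_{\overline{\lambda},B,A,y,x}g\rangle$ and observing they agree for all $h,g\in\mathcal{H}$. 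For the \emph{products} (ii), (iii), (iv), (xii), (xiii), (xx), the composition identity gives, for $M_{\lambda,A,B,x,y}M_{\mu,C,D,z,v}$ evaluated at $h$, the scalar $\langle C_n^*z_n,B_k^*y_k\rangle$ multiplying $A_k^*x_k\otimes\overline{D_n^*v_n}$; the hypothesis $\langle C_k^*z_k,B_n^*y_n\rangle=0$ for $k\neq n$ collapses the double sum to its diagonal, leaving the coefficient $\langle C_n^*z_n,B_n^*y_n\rangle$ and producing the asserted multiplier. Here (xii) is the case $C=A,D=B,z=x,v=y$; (ii) and (iii) follow by first applying (i) and then this collapse, using that $B$ (resp.\ $A$) orthonormal gives $\|B_n^*y_n\|=\|y_n\|$ (resp.\ $\|A_n^*x_n\|=\|x_n\|$); (iv) is an induction on $k$ feeding (xii) into itself; and (xiii) is (ii) together with (iii) in the case $A=B,x=y$.

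The \emph{square roots} in (ii), (iii) are where I would invoke something beyond computation. I would exhibit the candidate $M_{\sqrt{\mu},A,A,x,x}$, check it is self-adjoint by (i) (its symbol is real and nonnegative) and positive since $\langle M_{\sqrt{\mu},A,A,x,x}h,h\rangle=\sum_n\sqrt{\mu}_n\,|\langle h,A_n^*x_n\rangle|^2\geq0$, verify by the diagonal collapse that its square equals $M_{\lambda,A,B,x,y}M_{\lambda,A,B,x,y}^*$, and then appeal to uniqueness of the positive square root from the polar decomposition (Theorem \ref{POLARDECOMPOSITIONSCHATTEN}). The one item needing a real idea is the norm estimate (xi). With $\{A_n\}_n$ orthogonal, the normalized vectors $A_n^*x_n/\|x_n\|$ (over those $n$ with $x_n\neq0$) form an orthonormal system, so the diagonal operator $\Lambda$ sending $A_n^*x_n\mapsto\lambda_nA_n^*x_n$ and vanishing on the orthogonal complement is bounded with $\|\Lambda\|\leq\sup_n|\lambda_n|$, and $M_{\lambda\mu,A,B,x,y}=\Lambda\,M_{\mu,A,B,x,y}$ yields the first bound. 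For the symmetric bound I would pass to adjoints: by (i), $\|M_{\lambda\mu,A,B,x,y}\|=\|M_{\overline{\lambda\mu},B,A,y,x}\|$, and since $A$ now sits in the second slot and is still orthogonal, one factors $M_{\overline{\lambda\mu},B,A,y,x}=M_{\overline{\lambda},B,A,y,x}\,\Theta$ with $\Theta$ the diagonal operator of symbol $\mu$ on the same system, $\|\Theta\|\leq\sup_n|\mu_n|$; applying (i) once more gives $\|M_{\lambda\mu,A,B,x,y}\|\leq\sup_n|\mu_n|\,\|M_{\lambda,A,B,x,y}\|$, and taking the smaller of the two bounds gives the minimum.

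I expect the main obstacle to be bookkeeping rather than depth: keeping the conjugate-linear slot straight, confirming that each auxiliary sequence stays Bessel so that all operators in sight are bounded, and ensuring the diagonal operators in (xi) are defined only on the closed span of the relevant orthonormal system and are extended by zero. The two genuinely non-formal ingredients are the continuity-of-inner-product argument that legitimizes the series collapse in the product formulas, and the uniqueness of the positive square root used in (ii) and (iii).
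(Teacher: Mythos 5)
Your proposal is correct in substance and follows essentially the same computational strategy as the paper: the two rank-one identities, evaluation on a fixed vector $h$ with continuity of the inner product to justify the series manipulations, the diagonal collapse of the double sum under the hypothesis $\langle C_k^*z_k,B_n^*y_n\rangle=0$ for $k\neq n$ (the paper proves (xx) exactly this way and obtains (xii), (ii), (iii) by the same collapse), induction on $k$ for (iv), and the verification that $\{T_nB_n\}_n$, $\{B_nS\}_n$ remain operator-valued Bessel sequences before citing Theorem \ref{DEFINITIONEXISTENCE}. Two of your deviations are improvements or at least defensible variants. For the square roots in (ii)--(iii), the paper only computes $T^2=M_{\lambda,A,B,x,y}M_{\lambda,A,B,x,y}^*$ for $T=M_{\sqrt{\mu},A,A,x,x}$ and immediately concludes $T=(M_{\lambda,A,B,x,y}M_{\lambda,A,B,x,y}^*)^{1/2}$; your explicit check that $T$ is self-adjoint and positive, followed by uniqueness of the positive square root, supplies a step the paper leaves implicit and which is genuinely needed for that conclusion (though the uniqueness you want is the standard one for positive square roots, not literally the uniqueness clause of Theorem \ref{POLARDECOMPOSITIONSCHATTEN}). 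For (xi), the paper argues more directly: orthogonality of $\{A_n\}_n$ makes $\|M_{\lambda\mu,A,B,x,y}h\|^2=\sum_{n=1}^{\infty}|\lambda_n\mu_n|^2|\langle h,B_n^*y_n\rangle|^2\|x_n\|^2$ a diagonal sum, from which \emph{both} bounds follow at once by pulling out $\sup_{n\in\mathbb{N}}|\lambda_n|^2$ or $\sup_{n\in\mathbb{N}}|\mu_n|^2$. Your factorization $M_{\lambda\mu,A,B,x,y}=\Lambda M_{\mu,A,B,x,y}$ through a diagonal operator is valid, but the adjoint detour for the second bound is superfluous: since $\lambda\mu=\mu\lambda$, the same factorization with the roles exchanged, $M_{\lambda\mu,A,B,x,y}=\Lambda' M_{\lambda,A,B,x,y}$ with $\Lambda'$ of symbol $\mu$ on the orthonormal system $\{A_n^*x_n/\|x_n\|\}_{x_n\neq 0}$, gives it immediately; moreover your $\Theta$ requires a conjugation adjustment (one needs $\Theta^*(A_n^*x_n)=\mu_nA_n^*x_n$, i.e.\ $\Theta$ diagonal with symbol $\overline{\mu}$) for the identity $M_{\overline{\lambda\mu},B,A,y,x}=M_{\overline{\lambda},B,A,y,x}\Theta$ to hold.

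The one formal gap is your derivation of (xiii) from ``(ii) together with (iii) in the case $A=B$, $x=y$'': as stated, (ii) requires $\{A_n\}_n$ orthonormal and (iii) requires $\{B_n\}_n$ orthonormal, whereas (xiii) assumes $\{A_n\}_n$ only orthogonal, so the citation does not literally apply. The paper instead deduces (xiii) from (i) and the symbolic calculus (xii), whose hypothesis $\langle A_k^*x_k,A_n^*x_n\rangle=0$ for $k\neq n$ is exactly orthogonality: by (i), $M_{\lambda,A,A,x,x}^*=M_{\overline{\lambda},A,A,x,x}$, and (xii) gives $M_{\lambda,A,A,x,x}M_{\overline{\lambda},A,A,x,x}=M_{\nu,A,A,x,x}=M_{\overline{\lambda},A,A,x,x}M_{\lambda,A,A,x,x}$ with $\nu_n=|\lambda_n|^2\|x_n\|^2$. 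Your argument is rescued by observing that the proofs of the first identities in (ii) and (iii) use only orthogonality of the collapsing sequence together with the standing Bessel hypothesis of the theorem, but as written it cites hypotheses you do not have; the fix is one line.
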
 
 \begin{proof}
 \begin{enumerate}[\upshape(i)]
 \item $M_{\lambda,A,B,x,y}^*= \sum_{n=1}^{\infty}(\lambda_n (A^*_nx_n\otimes \overline{B^*_ny_n}))^*=\sum_{n=1}^{\infty}\overline{\lambda_n}(B^*_ny_n\otimes \overline{A^*_nx_n})$.
 \item 
 \begin{align*}
 M_{\lambda,A,B,x,y}M_{\lambda,A,B,x,y}^*&=\sum_{n=1}^{\infty}\lambda_n(A_n^*x_n\otimes \overline{B_n^*y_n})\left(\sum_{k=1}^{\infty}\overline{\lambda_k} (B_k^*y_k\otimes \overline{A_k^*x_k})\right)\\
 &=\sum_{n=1}^{\infty}\lambda_n\sum_{k=1}^{\infty}\overline{\lambda_k} (A_n^*x_n\otimes \overline{B_n^*y_n})(B_k^*y_k\otimes \overline{A_k^*x_k})\\
 &=\sum_{n=1}^{\infty}\lambda_n\sum_{k=1}^{\infty}\overline{\lambda_k}\langle B_k^*y_k, B_n^*y_n\rangle (A_n^*x_n \otimes \overline{A_k^*x_k})\\
 &=\sum_{n=1}^{\infty}\lambda_n\overline{\lambda_n}\langle B_n^*y_n, B_n^*y_n\rangle (A_n^*x_n \otimes \overline{A_n^*x_n})\\
 &=\sum_{n=1}^{\infty}|\lambda_n|^2 \|B_n^*y_n\|^2 (A_n^*x_n \otimes \overline{A_n^*x_n}).
 \end{align*}
 Define $T \coloneqq M_{\sqrt{\mu},A,A,x,x}=\sum_{n=1}^{\infty}|\lambda_n| \frac{\|B_n^*y_n\|}{\|x_n\|} (A_n^*x_n \otimes \overline{A_n^*x_n})$. Then 
 
 \begin{align*}
 T^2&=\sum_{n=1}^{\infty}|\lambda_n| \frac{\|B_n^*y_n\|}{\|x_n\|}(A_n^*x_n \otimes \overline{A_n^*x_n})\left(\sum_{k=1}^{\infty}|\lambda_k| \frac{\|B_k^*y_k\|}{\|x_k\|} (A_k^*x_k \otimes \overline{A_k^*x_k})\right)\\
 &=\sum_{n=1}^{\infty}|\lambda_n|\frac{\|B_n^*y_n\|}{\|x_n\|}\sum_{k=1}^{\infty}|\lambda_k|\frac{\|B_k^*y_k\|}{\|x_k\|}\langle A_k^*x_k, A_n^*x_n\rangle (A_n^*x_n \otimes \overline{A_k^*x_k})\\
 &=\sum_{n=1}^{\infty}|\lambda_n|^2\frac{\|B_n^*y_n\|^2}{\|x_n\|^2}\|x_n\|^2(A_n^*x_n \otimes \overline{A_n^*x_n})=M_{\lambda,A,B,x,y}M_{\lambda,A,B,x,y}^*.
 \end{align*}
 Therefore $T=(M_{\lambda,A,B,x,y}M_{\lambda,A,B,x,y}^*)^{1/2} $.
 \item Similar to the proof of (ii).
 \item The proof is by induction. When $k=2,$
 \begin{align*}
 M_{\lambda,A,B,x,y}^2&=\sum_{n=1}^{\infty}\lambda_n(A_n^*x_n\otimes \overline{B_n^*y_n})\left(\sum_{k=1}^{\infty}\lambda_k (A_k^*x_k\otimes \overline{B_k^*y_k})\right)\\
 &=\sum_{n=1}^{\infty}\lambda_n^2\langle A_n^*x_n, B_n^*y_n\rangle ^{1} (A^*_nx_n\otimes \overline{B^*_ny_n}).
 \end{align*}
 Assume the result is true for $m$. Then 
 
 \begin{align*}
 M_{\lambda,A,B,x,y}^{m+1}&=M_{\lambda,A,B,x,y}^{1}M_{\lambda,A,B,x,y}^{m}\\
 &=\sum_{n=1}^{\infty}\lambda_n(A_n^*x_n\otimes \overline{B_n^*y_n})\left(\sum_{k=1}^{\infty}\lambda_k^m\langle A_k^*x_k, B_k^*y_k\rangle^{m-1} (A_k^*x_k\otimes \overline{B_k^*y_k})\right)\\
 &=\sum_{n=1}^{\infty}\lambda_n\sum_{k=1}^{\infty}\lambda_k^m\langle A_k^*x_k, B_k^*y_k\rangle^{m-1}\langle A_k^*x_k, B_n^*y_n\rangle(A_n^*x_n\otimes \overline{B_k^*y_k})\\
 &=\sum_{n=1}^{\infty}\lambda_n^{m+1}\langle A_n^*x_n, B_n^*y_n\rangle ^{m} (A^*_nx_n\otimes \overline{B^*_ny_n}).
 \end{align*}
 Hence the conclusion.
 \item This is clear. 
 \item $ M_{\lambda+\mu,A,B,x,y}=\sum_{n=1}^{\infty}(\lambda_n+\mu_n) (A^*_nx_n\otimes \overline{B^*_ny_n})=\sum_{n=1}^{\infty}\lambda_n (A^*_nx_n\otimes \overline{B^*_ny_n})+\sum_{n=1}^{\infty}\mu_n (A^*_nx_n\otimes \overline{B^*_ny_n})= M_{\lambda,A,B,x,y}+ M_{\mu,A,B,x,y}$.
 \item We first note that the sum of two operator-valued Bessel sequences is again an   operator-valued Bessel sequence. In fact,
 
 \begin{align*}
 \left(\sum_{n=1}^\infty\|(A_n+C_n)h\|^2 \right)^\frac{1}{2}&=\left(\sum_{n=1}^\infty\|A_nh+C_nh\|^2 \right)^\frac{1}{2}\leq \left(\sum_{n=1}^\infty(\|A_nh\|+\|C_nh\|)^2 \right)^\frac{1}{2}\\
 &\leq \left(\sum_{n=1}^\infty\|A_nh\|^2 \right)^\frac{1}{2}+\left(\sum_{n=1}^\infty\|C_nh\|^2 \right)^\frac{1}{2}\\
 &\leq a\|h\|+c\|h\|=(a+c)\|h\|,~ \forall h \in \mathcal{H}.
 \end{align*}
 Now $ M_{\lambda,A+C,B,x,y}h=\sum_{n=1}^{\infty}\lambda_n\langle h,B^*_ny_n \rangle (A_n^*+C_n^*)x_n= M_{\lambda,A,B,x,y}h+ M_{\lambda,C,B,x,y}h, \forall h \in \mathcal{H}$.
 \item Similar to (vii).
 \item This follows from the linearity of $A_n^*$ for all $n \in \mathbb{N}$.
 \item This follows from the linearity of $B_n^*$ for all $n \in \mathbb{N}$ and linearity of inner product.
 \item For all $h \in \mathcal{H}$, $\|M_{\lambda\mu,A,B,x,y}h\|^2=\|\sum_{n=1}^{\infty}\lambda_n\mu_n(A_n^*x_n\otimes \overline{B_n^*y_n})h\|^2=\langle \sum_{n=1}^{\infty}\lambda_n\mu_n\langle h, B_n^*y_n\rangle A_n^*x_n, $ $\sum_{k=1}^{\infty}\lambda_k\mu_k\langle h,  B_k^*y_k\rangle A_k^*x_k\rangle=\sum_{n=1}^{\infty}|\lambda_n\mu_n|^2|\langle h, B_n^*y_n\rangle|^2\|x_n\|^2\leq \sup_{n\in \mathbb{N}}|\lambda_n| \sum_{n=1}^{\infty}|\mu_n|^2|\langle h, B_n^*y_n\rangle|^2\|x_n\|^2= \sup_{n\in \mathbb{N}}|\lambda_n|^2\|M_{\mu,A,B,x,y}h\|^2\leq \sup_{n\in \mathbb{N}}|\lambda_n|^2\|M_{\mu,A,B,x,y}\|^2\|h\|^2$. Similarly $\|M_{\lambda\mu,A,B,x,y}h\|^2\leq$ \\
$\sup_{n\in \mathbb{N}}|\mu_n|^2\|M_{\lambda,A,B,x,y}\|^2\|h\|^2$.
 \item Note that $\{\lambda_n\mu_n\langle A_n^*x_n, B_n^*y_n \rangle \}_n \in \ell^\infty(\mathbb{N})$. In fact, 
 
 $ \sup_{n\in \mathbb{N}}|\lambda_n\mu_n\langle A_n^*x_n, B_n^*y_n \rangle|\leq \sup_{n\in \mathbb{N}}|\lambda_n|\sup_{n\in \mathbb{N}}|\mu_n|\sup_{n\in \mathbb{N}}\|A_n\|\sup_{n\in \mathbb{N}}\|B_n\|\sup_{n\in \mathbb{N}}\|x_n\|\|y_n\|<\infty.$
 Then 
 \begin{align*}
 M_{\lambda,A,B,x,y}M_{\mu,A,B,x,y}h&=M_{\lambda,A,B,x,y}\left(\sum_{n=1}^{\infty}\mu_n\langle h,B^*_ny_n \rangle A^*_nx_n\right)\\
 &=\sum_{n=1}^{\infty}\mu_n\langle h,B^*_ny_n \rangle M_{\lambda,A,B,x,y}(A_n^*x_n)\\
 &=\sum_{n=1}^{\infty}\mu_n\langle h,B^*_ny_n \rangle\sum_{k=1}^{\infty}\lambda_k\langle A_n^*x_n, B^*_ky_k\rangle A^*_kx_k\\
 &=\sum_{n=1}^{\infty}\mu_n\lambda_n\langle h,B^*_ny_n \rangle \langle A^*_nx_n, B^*_ny_n\rangle A^*_nx_n= M_{\nu,A,B,x,y}h.
 \end{align*}
 \item Comes from (i) and symbolic calculus.
\item First we verify that $\{T_nB_n\}_n$ is an   operator-valued Bessel sequence in  $\mathcal{B}(\mathcal{H},\mathcal{H}_0)$: $ \left(\sum_{n=1}^\infty\|T_nB_nh\|^2 \right)^\frac{1}{2}\leq \left(\sum_{n=1}^\infty\|T_n\|^2\|B_nh\|^2 \right)^\frac{1}{2}
 \leq \sup_{n \in \mathbb{N}}\|T_n\|\left(\sum_{n=1}^\infty\|B_nh\|^2 \right)^\frac{1}{2}\leq \sup_{n \in \mathbb{N}}\|T_n\|b\|h\|, \forall h \in \mathcal{H}.$ We next see $\sup_{n\in\mathbb{N}}\|x_n\|\|T_n^*y_n\|\leq \sup_{n\in\mathbb{N}}\|T_n\|\sup_{n\in\mathbb{N}}\|x_n\|\|y_n\|<\infty$. Now $ M_{\lambda,A,TB,x,y}h=\sum_{n=1}^{\infty}\lambda_n\langle h,(T_nB_n)^*y_n \rangle A_n^*x_n$ $=\sum_{n=1}^{\infty}\lambda_n\langle h,B_n^*(T_n^*y_n) \rangle A_n^*x_n= M_{\lambda,A,B,x,T^*y}h, \forall h \in \mathcal{H}$.
 \item Note that $\{B_nS\}_n$ is   an operator-valued Bessel sequence. In fact, $\sum_{n=1}^{\infty}\|B_nSh\|^2\leq b \|Sh\|^2 \leq b\|S\|^2\|h\|^2, \forall h \in \mathcal{H}$. Next, $M_{\lambda,A,BS,x,y}h=\sum_{n=1}^{\infty}\lambda_n\langle h, (B_nS)^*y_n\rangle A_n^*x_n=\sum_{n=1}^{\infty}\lambda_n\langle Sh, B_n^*y_n\rangle A_n^*x_n$ $=M_{\lambda,A,B,x,y}Sh, \forall h \in \mathcal{H}.$
 \item $ M_{\lambda,TA,B,x,y}h=\sum_{n=1}^{\infty}\lambda_n\langle h,B_n^*y_n \rangle (T_nA_n)^*x_n=\sum_{n=1}^{\infty}\lambda_n\langle h,B_n^*y_n \rangle A_n^*(T_n^*x_n)= M_{\lambda,A,B,T^*x,y}h, \forall h \in \mathcal{H}$.
 \item  $M_{\lambda,AS,B,x,y}h=\sum_{n=1}^{\infty}\lambda_n\langle h,B_n^*y_n \rangle (A_nS)^*x_n=S^*(\sum_{n=1}^{\infty}\lambda_n\langle h,B_n^*y_n \rangle A_n^*x_n)=S^*M_{\lambda,A,B,x,y}h,\forall h \in \mathcal{H}$.
 \item $ M_{\lambda,A,B,x,Ty}=\sum_{n=1}^{\infty}\lambda_n\langle h,B_n^*T_ny_n \rangle A_n^*x_n=\sum_{n=1}^{\infty}\lambda_n\langle h,(T_n^*B_n)^*y_n \rangle A_n^*x_n= M_{\lambda,A,T^*B,x,y}$.
 \item $M_{\lambda,A,B,Tx,y}=\sum_{n=1}^{\infty}\lambda_n\langle h,B_n^*y_n \rangle A_n^*Tx_n=\sum_{n=1}^{\infty}\lambda_n\langle h,B_n^*y_n \rangle (T^*A_n)^*x_n=M_{\lambda,T^*A,B,x,y}$.
 \item $M_{\lambda,A,B,x,y}M_{\mu,C,D,z,v}h=\sum_{n=1}^{\infty}\lambda_n\langle M_{\mu,C,D,z,v}h,B_n^*y_n \rangle A_n^*x_n=\sum_{n=1}^{\infty}\sum_{k=1}^{\infty}\lambda_n\mu_k \langle h, D_k^*v_k\rangle \langle C_k^*z_k,B_n^*y_n \rangle A_n^*x_n$ $=\sum_{n=1}^{\infty}\lambda_n\mu_n\langle C_n^*z_n, D_n^*y_n\rangle\langle h, D_n^*v_n\rangle A_n^*x_n$.
 \end{enumerate}	
 \end{proof} 

 \begin{theorem}
 Let $a $ (resp. $b $) be a Bessel bound for $\{A_n\}_n$ (resp.  $\{B_n\}_n$).
 \begin{enumerate}[\upshape(i)]
 \item If $\{\lambda_n\}_n \in c_0(\mathbb{N})$, then $M_{\lambda,A,B,x,y}$ is a compact operator.
 \item If $\{\lambda_n\}_n \in \ell^1(\mathbb{N})$, then  $\|M_{\lambda,A,B,x,y}\|_{\operatorname{Nuc}}\leq\sqrt{ab}\sup_{n\in \mathbb{N}}\|x_n\|\|y_n\|\|\{\lambda_n\}_n\|_1$.
 \item If $\{\lambda_n\}_n \in \ell^2(\mathbb{N})$ and $\{A_n\}_n$ is orthogonal, then  $M_{\lambda,A,B,x,y}$ is a Hilbert-Schmidt operator with
 
  $\sigma(M_{\lambda,A,B,x,y})\leq \sqrt{ab}\sup_{n\in \mathbb{N}}\|x_n\|\|y_n\|\|\{\lambda_n\}_n\|_2$.
 \end{enumerate}
 \end{theorem}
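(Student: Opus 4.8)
The plan is to treat the three parts separately. In each case I would realise $M_{\lambda,A,B,x,y}$ as a norm-limit (or a convergent series) of rank-one operators and control the relevant norm using two ingredients already at hand: the operator-norm estimate of Theorem \ref{DEFINITIONEXISTENCE}, and the pointwise bounds $\|A_n^*x_n\|\le\sqrt a\,\|x_n\|$ and $\|B_n^*y_n\|\le\sqrt b\,\|y_n\|$, which come from $\|A_n\|\le\sqrt a$, $\|B_n\|\le\sqrt b$ (the remark following the definition of an operator-valued Bessel sequence). Throughout, each summand $\lambda_n(A_n^*x_n\otimes\overline{B_n^*y_n})$ is rank one, so every finite partial sum is a finite-rank operator.

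For (i), set $M_N\coloneqq\sum_{n=1}^N\lambda_n(A_n^*x_n\otimes\overline{B_n^*y_n})$, a finite-rank (hence compact) operator. The tail $M_{\lambda,A,B,x,y}-M_N$ is itself a multiplier, namely the one attached to the $\ell^\infty$ sequence that agrees with $\{\lambda_n\}_n$ for $n>N$ and vanishes for $n\le N$. Applying Theorem \ref{DEFINITIONEXISTENCE} to this tail sequence gives $\|M_{\lambda,A,B,x,y}-M_N\|\le\sqrt{ab}\,(\sup_{n>N}|\lambda_n|)\sup_{n\in\mathbb N}\|x_n\|\|y_n\|$. Since $\{\lambda_n\}_n\in c_0(\mathbb N)$ forces $\sup_{n>N}|\lambda_n|\to0$, we get $M_N\to M_{\lambda,A,B,x,y}$ in operator norm, and a norm-limit of finite-rank operators is compact.

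For (ii), I would exhibit an explicit nuclear representation. Put $f_n\coloneqq\langle\,\cdot\,,B_n^*y_n\rangle\in\mathcal H^*$ and $u_n\coloneqq\lambda_nA_n^*x_n\in\mathcal H$, so that $M_{\lambda,A,B,x,y}h=\sum_{n=1}^\infty f_n(h)u_n$ for every $h$, with $\|f_n\|=\|B_n^*y_n\|$. Then $\sum_{n=1}^\infty\|f_n\|\|u_n\|=\sum_{n=1}^\infty|\lambda_n|\,\|A_n^*x_n\|\,\|B_n^*y_n\|\le\sqrt{ab}\sum_{n=1}^\infty|\lambda_n|\,\|x_n\|\|y_n\|\le\sqrt{ab}\,\sup_{n\in\mathbb N}\|x_n\|\|y_n\|\,\|\{\lambda_n\}_n\|_1<\infty$. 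This shows $M_{\lambda,A,B,x,y}$ is nuclear, and since the nuclear norm is the infimum over such representations, this particular representation already yields the claimed bound on $\|M_{\lambda,A,B,x,y}\|_{\operatorname{Nuc}}$.

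For (iii), I would compute the Hilbert-Schmidt norm directly from $\sigma(T)^2=\sum_k\|Te_k\|^2$ for a fixed orthonormal basis $\{e_k\}_k$ of $\mathcal H$. Expanding $M_{\lambda,A,B,x,y}e_k=\sum_n\lambda_n\langle e_k,B_n^*y_n\rangle A_n^*x_n$ and taking squared norms produces a double sum over $n,m$ whose coefficient is $\langle A_n^*x_n,A_m^*x_m\rangle$; here the orthogonality of $\{A_n\}_n$ is essential, since it annihilates the off-diagonal ($n\ne m$) terms and leaves $\|M_{\lambda,A,B,x,y}e_k\|^2=\sum_n|\lambda_n|^2|\langle e_k,B_n^*y_n\rangle|^2\|A_n^*x_n\|^2$. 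Summing over $k$ and interchanging the two sums (legitimate since every summand is nonnegative), Parseval's identity $\sum_k|\langle e_k,B_n^*y_n\rangle|^2=\|B_n^*y_n\|^2$ gives $\sigma(M_{\lambda,A,B,x,y})^2=\sum_n|\lambda_n|^2\|A_n^*x_n\|^2\|B_n^*y_n\|^2$. Bounding $\|A_n^*x_n\|^2\le a\|x_n\|^2$ and $\|B_n^*y_n\|^2\le b\|y_n\|^2$ and factoring out $\sup_{n\in\mathbb N}\|x_n\|^2\|y_n\|^2$ then yields both $\sigma(M_{\lambda,A,B,x,y})<\infty$ (so the operator is Hilbert-Schmidt) and the stated estimate. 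The main obstacle is precisely this last computation: one must check that it is the orthogonality of $\{A_n\}_n$ (not of $\{B_n\}_n$) that diagonalises the double sum, and that the interchange of the $k$- and $n$-summations is justified, which it is because all the terms are nonnegative.
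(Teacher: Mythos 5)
Your proposal is correct and takes essentially the same approach as the paper's proof in all three parts: finite-rank truncation with the tail norm controlled by the estimate of Theorem \ref{DEFINITIONEXISTENCE} for (i), the explicit nuclear representation with $f_n=\langle\,\cdot\,,B_n^*y_n\rangle$ and $u_n=\lambda_nA_n^*x_n$ for (ii), and the direct orthonormal-basis computation for (iii), where the orthogonality of $\{A_n\}_n$ annihilates the off-diagonal terms and Parseval yields $\sum_k|\langle e_k,B_n^*y_n\rangle|^2=\|B_n^*y_n\|^2$. If anything, you are slightly more careful than the paper in spelling out why the tail in (i) is itself a multiplier to which Theorem \ref{DEFINITIONEXISTENCE} applies and why the interchange of summations in (iii) is legitimate (nonnegative terms).
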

 \begin{proof}
  \begin{enumerate}[\upshape(i)]
  \item  For $m \in \mathbb{N}$, define $M_{\lambda_m,A,B,x,y}\coloneqq\sum_{n=1}^{m}\lambda_n(A_n^*x_n\otimes B_n^*y_n)$. Then $M_{\lambda_m,A,B,x,y}(\mathcal{H})\subseteq\operatorname{span}\{A_n^*x_n\}_{n=1}^m$ and $\|M_{\lambda_m,A,B,x,y}h-M_{\lambda,A,B,x,y}h\|=\|\sum_{n=m+1}^{\infty}\lambda_n(A_n^*x_n\otimes B_n^*y_n)h\|\leq$ 
  
  $ \sqrt{ab} \sup_{n\in \mathbb{N}}\|x_n\|\|y_n\|\|h\|\sup_{m+1\leq n<\infty }|\lambda_n|$, $ \forall h \in \mathcal{H}$. Hence $\{M_{\lambda_m,A,B,x,y}\}_{m=1}^\infty$ is a sequence of finite rank operators and  $\|M_{\lambda_m,A,B,x,y}-M_{\lambda,A,B,x,y}\|\leq\sqrt{ab} \sup_{n\in \mathbb{N}}\|x_n\|\|y_n\|\sup_{m+1\leq n<\infty }|\lambda_n|$ $\rightarrow$ $0$ as $m \rightarrow \infty$. Thus $M_{\lambda_m,A,B,x,y}$ is compact.
  \item Define $f_n: \mathcal{H}\ni h \mapsto \langle h,B_n^*y_n \rangle \in \mathbb{K}$ for each $n \in \mathbb{N}$. Then $f_n\in  \mathcal{H}^*$ and $ \|f_n\|=\|B_n^*y_n\|$ for all $n \in \mathbb{N}$. This yields  $\|M_{\lambda,A,B,x,y}\|_{\operatorname{Nuc}}\leq\sum_{n=1}^{\infty}\|f_n\|\|\lambda_nA_n^*x_n\|=\sum_{n=1}^{\infty}|\lambda_n|\|A_n^*x_n\|\|B_n^*y_n\|\leq\sum_{n=1}^{\infty}|\lambda_n|\|A_n^*\|\|x_n\|\|B_n^*\|\|y_n\|\leq \sum_{n=1}^{\infty}|\lambda_n|\|x_n\|\sqrt{ab}\|y_n\|\leq\sqrt{ab}\sum_{n=1}^{\infty}|\lambda_n|\sup_{n\in \mathbb{N}}\|x_n\|\|y_n\| =$ 
  
  $\sqrt{ab}\sup_{n\in \mathbb{N}}\|x_n\|\|y_n\|\|\{\lambda_n\}_n\|_1$.
  \item Let $\{e_n\}_n$ be an orthonormal basis for $\mathcal{H}$. Then $\sigma(M_{\lambda,A,B,x,y})^2= \sum_{n=1}^{\infty}\|M_{\lambda,A,B,x,y}e_n\|^2=\sum_{n=1}^{\infty}\|\sum_{k=1}^{\infty}\lambda_k(A_k^*x_k\otimes \overline{B_k^*y_k})\|^2=\sum_{n=1}^{\infty}\langle \sum_{k=1}^{\infty}\lambda_k(A_k^*x_k\otimes \overline{B_k^*y_k})e_n,\sum_{r=1}^{\infty}\lambda_r(A_r^*x_r\otimes \overline{B_r^*y_r})e_n \rangle=\sum_{n=1}^{\infty}\sum_{k=1}^{\infty}|\lambda_k|^2\|A_k^*x_k\|^2|\langle e_n, B_k^*y_k\rangle|^2 =\sum_{k=1}^{\infty}\sum_{n=1}^{\infty}|\lambda_k|^2\|A_k^*x_k\|^2|\langle e_n, B_k^*y_k\rangle|^2=$
  $\sum_{k=1}^{\infty}|\lambda_k|^2\|A_k^*x_k\|^2$ $\sum_{n=1}^{\infty}|\langle e_n, B_k^*y_k\rangle|^2=\sum_{k=1}^{\infty}|\lambda_k|^2\|A_k^*x_k\|^2\|B_k^*y_k\|^2 \leq \sum_{k=1}^{\infty}|\lambda_k|^2\|A_k^*\|^2\|x_k\|^2\|B_k^*\|^2\|y_k\|^2\leq ab \sup_{n\in \mathbb{N}}\|x_n\|^2\|y_n\|^2$ $\|\{\lambda_n\}_n\|^2_2$.
  \end{enumerate}	
 \end{proof}
 We derive  continuity of multiplier in the next proposition.
  \begin{proposition}
  Let $\lambda_n^{(k)}=\{\lambda_n^{(k)}\}_n$. If $\{\lambda_n^{(k)}\}_{k=1}^\infty $ converges to $\{\lambda_n\}_n $ as $k \rightarrow \infty $ in	
  \begin{enumerate}[\upshape(i)]
 \item  $\ell^\infty(\mathbb{N})$,  then $\{M_{\lambda^{(k)},A,B,x,y}\}_{k=1}^\infty$ converges to  $M_{\lambda,A,B,x,y}$ as $k \rightarrow \infty $ in the operator-norm. 
 \item  $\ell^1(\mathbb{N})$,   then $\{M_{\lambda^{(k)},A,B,x,y}\}_{k=1}^\infty$ converges to  $M_{\lambda,A,B,x,y}$ as $k \rightarrow \infty $ in the nuclear-norm.
 \item $\ell^2(\mathbb{N})$ and $\{A_n\}_n$ is orthogonal,   then $\{M_{\lambda^{(k)},A,B,x,y}\}_{k=1}^\infty$ converges to  $M_{\lambda,A,B,x,y}$ as $k \rightarrow \infty $ in the Hilbert-Schmidt norm.
 \end{enumerate}
 \end{proposition}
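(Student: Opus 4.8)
The plan is to exploit the fact that the map $\lambda \mapsto M_{\lambda,A,B,x,y}$ is linear, which collapses all three assertions into a single mechanism: apply an already-proved norm estimate to the difference symbol $\lambda^{(k)}-\lambda$. First I would record the linearity identity. By parts (v) and (vi) of Theorem \ref{PROPERTIESOFM} the assignment $\lambda \mapsto M_{\lambda,A,B,x,y}$ is additive and homogeneous, so that
$$
M_{\lambda^{(k)},A,B,x,y}-M_{\lambda,A,B,x,y}=M_{\lambda^{(k)}-\lambda,\,A,B,x,y}
$$
for every $k$. This is the whole content of the argument: it converts a statement about convergence of a family of operators into a bound on the size of one multiplier whose symbol is $\lambda^{(k)}-\lambda$, and by hypothesis $\lambda^{(k)}-\lambda$ tends to the zero sequence in the relevant norm as $k\to\infty$.

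For (i) I would feed the difference symbol into the operator-norm estimate of Theorem \ref{DEFINITIONEXISTENCE}, obtaining
$$
\bigl\|M_{\lambda^{(k)},A,B,x,y}-M_{\lambda,A,B,x,y}\bigr\|\le \sqrt{ab}\,\Bigl(\sup_{n\in\mathbb{N}}\|x_n\|\|y_n\|\Bigr)\,\bigl\|\lambda^{(k)}-\lambda\bigr\|_\infty,
$$
and the right-hand side vanishes in the limit because $\lambda^{(k)}\to\lambda$ in $\ell^\infty(\mathbb{N})$. For (ii) and (iii) the reasoning is identical, except that I substitute the nuclear-norm bound and the Hilbert--Schmidt bound $\sigma(\cdot)$ supplied by the theorem immediately preceding this proposition (its parts (ii) and (iii)); this yields
$$
\bigl\|M_{\lambda^{(k)}-\lambda,\,A,B,x,y}\bigr\|_{\operatorname{Nuc}}\le \sqrt{ab}\,\Bigl(\sup_{n\in\mathbb{N}}\|x_n\|\|y_n\|\Bigr)\,\bigl\|\lambda^{(k)}-\lambda\bigr\|_1
$$
and the analogous inequality with $\sigma(\cdot)$ and $\|\lambda^{(k)}-\lambda\|_2$. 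The hypothesis that $\{A_n\}_n$ is orthogonal is invoked only in (iii), since that is exactly the condition under which the Hilbert--Schmidt estimate was established.

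The only preliminary I would verify is that the difference sequences live in the correct space so that the quoted norms are meaningful: because $\ell^1(\mathbb{N})$ and $\ell^2(\mathbb{N})$ are vector spaces, $\lambda^{(k)}-\lambda$ lies in $\ell^1$ (respectively $\ell^2$) whenever both $\lambda^{(k)}$ and $\lambda$ do, hence in particular in $\ell^\infty(\mathbb{N})$, so each $M_{\lambda^{(k)}-\lambda,A,B,x,y}$ is a well-defined operator of the appropriate class. There is no genuine obstacle in this proposition; the entire argument is the linearity identity combined with the three Lipschitz-type bounds already at hand, and the only point demanding care is matching each norm on the symbol side to the corresponding operator norm on the target side.
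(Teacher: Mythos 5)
Your proposal is correct and takes essentially the same route as the paper: the paper's proof likewise reduces each part to estimating the multiplier with difference symbol $\lambda^{(k)}-\lambda$, bounding it by $\sqrt{ab}\,\sup_{n\in\mathbb{N}}\|x_n\|\|y_n\|$ times $\|\lambda^{(k)}-\lambda\|_\infty$, $\|\lambda^{(k)}-\lambda\|_1$, or $\|\lambda^{(k)}-\lambda\|_2$ respectively, with orthogonality of $\{A_n\}_n$ used only in the Hilbert--Schmidt case. The only cosmetic difference is that the paper re-derives these estimates inline (the Hilbert--Schmidt one via an orthonormal basis computation) rather than formally citing the linearity identity from Theorem \ref{PROPERTIESOFM} and the bounds from the preceding theorem, as you do.
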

 \begin{proof}
 Let $a$ (resp. $b$) be a Bessel bound for $\{A_n\}_n $ (resp. $\{B_n\}_n $).
 \begin{enumerate}[\upshape(i)]
 \item  For each $h \in \mathcal{H}$, $\|M_{\lambda^{(k)},A,B,x,y}h- M_{\lambda,A,B,x,y}h\|=\|\sum_{n=1}^{\infty}(\lambda_n^{(k)}-\lambda_n)(A_n^*x_n\otimes \overline{B_n^*y_n})h\|\leq \sqrt{ab} \sup_{n\in \mathbb{N}}|\lambda_n^{(k)}-\lambda_n|\|h\|\sup_{n\in \mathbb{N}}\|x_n\|\|y_n\|$. Therefore $ \|M_{\lambda^{(k)},A,B,x,y}-M_{\lambda,A,B,x,y}\|\leq\sqrt{ab}\sup_{n\in \mathbb{N}}\|x_n\|\|y_n\|\sup_{n\in \mathbb{N}}|\lambda_n^{(k)}-\lambda_n|\rightarrow 0 $ as $k \rightarrow \infty $.
 \item $\|M_{\lambda^{(k)},A,B,x,y}- M_{\lambda,A,B,x,y}\|_{\operatorname{Nuc}}=\|\sum_{n=1}^{\infty}(\lambda^{(k)}_n-\lambda_n)(A_n^*x_n\otimes \overline{B_n^*y_n})\|_{\operatorname{Nuc}}\leq\sum_{n=1}^{\infty}|\lambda^{(k)}_n-\lambda_n|\|A_n^*x_n\|\|B_n^*y_n\| $ $\leq \sqrt{ab}\sup_{n\in \mathbb{N}}\|x_n\|\|y_n\|\sum_{n=1}^{\infty}|\lambda^{(k)}_n-\lambda_n|\rightarrow 0$ as $k \rightarrow \infty $.
 \item Starting with an orthonormal basis $\{e_n\}_n$ for $\mathcal{H}$, we see that $ \sigma(M_{\lambda^{(k)},A,B,x,y}- M_{\lambda,A,B,x,y})^2=\sigma(\sum_{n=1}^{\infty}(\lambda^{(k)}_n-\lambda_n)(A_n^*x_n\otimes \overline{B_n^*y_n}))^2=\sum_{r=1}^{\infty}\|\sum_{n=1}^{\infty}(\lambda^{(k)}_n-\lambda_n)(A_n^*x_n\otimes \overline{B_n^*y_n})e_r\|^2=\sum_{r=1}^{\infty}\sum_{n=1}^{\infty}|\lambda^{(k)}_n-\lambda_n|^2\|A_n^*x_n\|^2|\langle e_r,B_n^*y_n \rangle |^2=\sum_{n=1}^{\infty}|\lambda^{(k)}_n-\lambda_n|^2\|A_n^*x_n\|^2\|B_n^*y_n\|^2\leq ab \sup_{n\in \mathbb{N}}\|x_n\|^2\|y_n\|^2\sum_{n=1}^{\infty}|\lambda^{(k)}_n-\lambda_n|^2 \rightarrow 0$ as $k \rightarrow \infty $.
 \end{enumerate}	
 \end{proof}
 Following proposition shows that in a special case, the multiplier is bounded below.
 \begin{proposition}
Let $\{A_n\}_n$ be  an operator-valued orthonormal basis in  $\mathcal{B}(\mathcal{H},\mathcal{H}_0)$ and   $\{B_n\}_n$ be  an operator-valued Riesz basis in  $\mathcal{B}(\mathcal{H},\mathcal{H}_0)$. 
 \begin{enumerate}[\upshape(i)]
 \item If $x_n\neq0 \neq y_n, \forall n \in \mathbb{N}$, then the map $ S: \ell^\infty(\mathbb{N})\ni \{\lambda_n\}_n \mapsto M_{\lambda,A,B,x,y} \in \mathcal{B}(\mathcal{H})$ is a well-defined injective bounded linear operator.
 \item There exists a unique invertible $T \in \mathcal{B}(\mathcal{H})$ such that 
 \begin{align*}
  \sup_{g \in \mathcal{H}_0, g \neq 0}\sup_{n\in \mathbb{N}}\frac{|\lambda_n\langle g, y_n\rangle |\|x_n\|}{\|T^{-1}A_n^*g\|}\leq\|M_{\lambda,A,B,x,y}\|\leq \|T\|\sup_{n\in \mathbb{N}} |\lambda_n|\sup_{n\in \mathbb{N}} \|x_n\| \|y_n\|.
 \end{align*}
 In particular, if $x_n\neq 0, \forall n \in \mathbb{N}$ (resp. $y_n\neq 0, \forall n \in \mathbb{N}$), then 
 
 \begin{align*}
 \frac{\sup_{n\in \mathbb{N}}|\lambda_n\langle x_n, y_n\rangle |}{\|T^{-1}\|}\leq\|M_{\lambda,A,B,x,y}\|\quad \left(\text{resp. } \sup_{n\in \mathbb{N}}\frac{|\lambda_n \|x_n\| \|y_n\|}{\|T^{-1}\|}\leq\|M_{\lambda,A,B,x,y}\|\right).
 \end{align*}
 \end{enumerate}
 \end{proposition}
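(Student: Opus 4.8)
The plan is to first express the Riesz basis $\{B_n\}_n$ through the orthonormal basis $\{A_n\}_n$. By Theorem \ref{RIESZBASISCRITERION} there is a unique invertible $T \in \mathcal{B}(\mathcal{H})$ with $B_n = A_nT$ for all $n$, equivalently $B_n^* = T^*A_n^*$. This $T$ is precisely the operator asserted in part (ii), and its uniqueness is inherited directly from Theorem \ref{RIESZBASISCRITERION}. Every estimate below rests on two bookkeeping facts: the substitution $\langle h, B_n^*y_n\rangle = \langle Th, A_n^*y_n\rangle$ and the orthonormality identity $\langle A_n^*u, A_m^*v\rangle = \delta_{n,m}\langle u,v\rangle$, which in particular gives $\|A_n^*x_n\| = \|x_n\|$ and the orthogonality of the family $\{A_n^*x_n\}_n$.

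For part (i), linearity of $S$ is immediate from items (v) and (vi) of Theorem \ref{PROPERTIESOFM}, and well-definedness together with boundedness follow from Theorem \ref{DEFINITIONEXISTENCE}. For injectivity I would recover each coordinate $\lambda_m$ from the operator: since $\langle A_n^*x_n, A_m^*x_m\rangle = \delta_{n,m}\|x_n\|^2$, pairing $M_{\lambda,A,B,x,y}h$ against $A_m^*x_m$ collapses the defining series to a single term. Choosing the test vector $h = T^{-1}A_m^*y_m$ and using $\langle T^{-1}A_m^*y_m, B_m^*y_m\rangle = \langle A_m^*y_m, A_m^*y_m\rangle = \|y_m\|^2$ yields $\langle M_{\lambda,A,B,x,y}h, A_m^*x_m\rangle = \lambda_m\|y_m\|^2\|x_m\|^2$. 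Because $x_m \neq 0 \neq y_m$, vanishing of $M_{\lambda,A,B,x,y}$ forces $\lambda_m = 0$ for every $m$, so $S$ is injective.

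For part (ii), the upper bound is Theorem \ref{DEFINITIONEXISTENCE} applied with the Bessel bound $1$ for $\{A_n\}_n$ and the bound $\|T\|^2$ for $\{B_n\}_n$ (since $\sum_n\|B_nh\|^2 = \|Th\|^2 \leq \|T\|^2\|h\|^2$), which turns the constant $\sqrt{bd}$ of that theorem into $\|T\|$. For the lower bound I would fix $n$ and $g \neq 0$, take the test vector $h = T^{-1}A_n^*g$, and again pair against $A_n^*x_n$. The crucial simplification is $\langle h, B_n^*y_n\rangle = \langle T^{-1}A_n^*g, T^*A_n^*y_n\rangle = \langle A_n^*g, A_n^*y_n\rangle = \langle g, y_n\rangle$, which gives $\langle M_{\lambda,A,B,x,y}h, A_n^*x_n\rangle = \lambda_n\langle g,y_n\rangle\|x_n\|^2$. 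Combining this with the trivial bound $|\langle M_{\lambda,A,B,x,y}h, A_n^*x_n\rangle| \leq \|M_{\lambda,A,B,x,y}\|\,\|T^{-1}A_n^*g\|\,\|x_n\|$ and dividing produces the claimed inequality after taking suprema over $n$ and $g$. The two ``in particular'' statements follow by specializing $g = x_n$ and $g = y_n$ respectively and using $\|T^{-1}A_n^*g\| \leq \|T^{-1}\|\,\|A_n^*g\| = \|T^{-1}\|\,\|g\|$ to replace the denominator, cancelling the resulting $\|x_n\|$ or $\|y_n\|$ factor.

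The main obstacle I anticipate is not any single sharp estimate but the adjoint and orthonormality bookkeeping: the entire argument hinges on the clean evaluations $\langle T^{-1}A_n^*g, B_n^*y_n\rangle = \langle g, y_n\rangle$ and $\|A_n^*x_n\| = \|x_n\|$, so the delicate step is confirming $B_n^* = T^*A_n^*$ and inserting the orthonormal-basis inner-product identity into the correct slots. Once that is pinned down, each inequality reduces to a one-line operator-norm or Cauchy--Schwarz estimate.
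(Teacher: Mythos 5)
Your proposal is correct and follows essentially the same route as the paper: both obtain the unique invertible $T$ with $B_n=A_nT$ from Theorem \ref{RIESZBASISCRITERION}, get the upper bound from Theorem \ref{DEFINITIONEXISTENCE} with Bessel bounds $1$ and $\|T\|^2$, and prove injectivity and the lower bound using the test vectors $h=T^{-1}A_k^*y_k$ and $h=T^{-1}A_n^*g$ together with the orthonormality identity $\langle A_n^*u, A_m^*v\rangle=\delta_{n,m}\langle u,v\rangle$. The only cosmetic difference is that you isolate the surviving term by pairing with $A_n^*x_n$ and invoking Cauchy--Schwarz, whereas the paper evaluates the norm of the collapsed series directly (and, for injectivity, applies $A_k$ to it); the resulting estimates are identical.
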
 
 \begin{proof}
 Let  $T \in \mathcal{B}(\mathcal{H})$ be the unique invertible operator  such that $B_n=A_nT, \forall n \in \mathbb{N}$, given by Theorem \ref{RIESZBASISCRITERION}.
\begin{enumerate}[\upshape(i)]
\item By using Theorem \ref{DEFINITIONEXISTENCE}, $\|S\{\lambda_n\}_n\|=\| M_{\lambda,A,B,x,y}\|\leq\|T\|\sup_{n\in \mathbb{N}} \|x_n\| \|y_n\|\|\{\lambda_n\}_n\|_\infty$, $\forall \{\lambda_n\}_n \in \ell^\infty(\mathbb{N})$ which implies  $S$ is bounded. From (v) and (vi) in Theorem \ref{PROPERTIESOFM} we see that $S$ is linear. Now suppose $S\{\lambda_n\}_n=0$ for some $\{\lambda_n\}_n \in\ell^\infty(\mathbb{N})$. Then $\sum_{n=1}^{\infty}\lambda_n\langle Th,A^*_ny_n\rangle A_n^*x_n=\sum_{n=1}^{\infty}\lambda_n\langle h,B^*_ny_n\rangle A_n^*x_n =\sum_{n=1}^{\infty}\lambda_n (A^*_nx_n\otimes \overline{B^*_ny_n})h=M_{\lambda,A,B,x,y}h=(S\{\lambda_n\}_n)h=0,\forall h \in \mathcal{H}$. By taking $h=T^{-1}A_k^*y_k, k \in \mathbb{N}$ we get $\lambda_k\|y_k\|^2x_k=A_k(\lambda_k\|y_k\|^2A_k^*x_k)=A_k0=0,\forall k \in \mathbb{N}$. Hence $S$ is injective.
\item Let $n \in \mathbb{N}$ be fixed  and $g \in \mathcal{H}_0$ be nonzero.  We note that $A_n^*g\neq0$. Else $0=A_nA_n^*g=g$, which is forbidden. Upper bound for the operator norm is clear and for lower, 
\begin{align*}
\|M_{\lambda,A,B,x,y}\|&=\sup _{h\in \mathcal{H}, \|h\|\leq 1}\|M_{\lambda,A,B,x,y}h\|=\sup _{h\in \mathcal{H}, \|h\|\leq 1}\left\|\sum_{k=1}^{\infty}\lambda_k\langle Th,A^*_ky_k \rangle A^*_kx_k\right\|\\
&\geq \left\|\sum_{k=1}^{\infty}\lambda_k\langle T\frac{T^{-1}A^*_ng}{\|T^{-1}A^*_ng\|},A^*_ky_k \rangle A^*_kx_k\right\|=\frac{|\lambda_n\langle g, y_n\rangle |\|x_n\|}{\|T^{-1}A_n^*g\|}.
\end{align*}
If $x_n\neq 0, \forall n \in \mathbb{N}$ (resp. $y_n\neq 0, \forall n \in \mathbb{N}$), then we take $g=x_n$ (resp. $g=y_n$) to get 

\begin{align*}
&\frac{|\lambda_n\langle x_n, y_n\rangle |\|x_n\|}{\|T^{-1}A_n^*x_n\|}\geq \frac{|\lambda_n\langle x_n, y_n\rangle |\|x_n\|}{\|T^{-1}\|\|A_n^*x_n\|}=\frac{|\lambda_n\langle x_n, y_n\rangle |\|x_n\|}{\|T^{-1}\|\|x_n\|}=\frac{|\lambda_n\langle x_n, y_n\rangle |}{\|T^{-1}\|}\\
\bigg(\text{resp. } \quad &\frac{|\lambda_n\langle y_n, y_n\rangle |\|x_n\|}{\|T^{-1}A_n^*y_n\|}\geq\frac{|\lambda_n| \|y_n\|^2 \|x_n\|}{\|T^{-1}\|\|A_n^*y_n\|}=\frac{|\lambda_n| \|y_n\| \|x_n\|}{\|T^{-1}\|}\bigg).
\end{align*}
\end{enumerate}	
 \end{proof}

 \section{Generalized Hilbert-Schmidt class}\label{HSCLASSSECTION}
 Let $\{x_n\}_n$ be a  frame for $\mathcal{H}$. In \cite{BALAZS4, FRANKPAULSENTIBALLI}, operators $A\in \mathcal{B}(\mathcal{H})$ satisfying $\sum_{n=1}^{\infty}\|Ax_n\|^2<\infty$ are studied. In Lemma 1.2 of \cite{FRANKPAULSENTIBALLI} it was showed that if $\{x_n\}_n$, $\{y_n\}_n$ are Parseval frames,  then $\sum_{n=1}^{\infty}\|Ax_n\|^2<\infty$ if and only if $\sum_{n=1}^{\infty}\|Ay_n\|^2<\infty$ and in this case, $\sigma(A)^2=\sum_{n=1}^{\infty}\|Ax_n\|^2=\sum_{n=1}^{\infty}\|Ay_n\|^2=\sum_{n=1}^{\infty}\|A^*x_n\|^2$.

 In the situation of  operator-valued sequences, we set  up the   following definition.

 \begin{definition}\label{SCHMIDTGENERALIZED}
 Let $\theta :\mathcal{H}_0 \rightarrow \mathcal{H}_0$ be a conjugate-linear isometry. Let  $\{F_n\}_n$ be  an operator-valued orthonormal basis  in  $\mathcal{B}(\mathcal{H},\mathcal{H}_0)$	and $\{x_n\}_n$ be a  sequence in $\mathcal{H}_0$.  Define (the generalized Hilbert-Schmidt class) 
 \begin{align*}
 \mathcal{S}_{\theta,F,x}(\mathcal{H})\coloneqq \bigg\{ A\in \mathcal{B}(\mathcal{H}):& \sum_{n=1}^{\infty}\|AF_n^*x_n\|^2<\infty, \theta(F_mV^*A^*U^*F_n^*x_n)= F_nUAVF_m^*x_m,\\
 &\theta(F_mV^*AU^*F_n^*x_n)= F_nUA^*VF_m^*x_m,  \forall n, m \in \mathbb{N},\forall U,V \in \mathcal{B}(\mathcal{H}) \bigg\}.
 \end{align*}
 If $A \in \mathcal{S}_{\theta,F,x}(\mathcal{H})$, then we define
 \begin{align*}
 \sigma_{\theta, F,x}(A)\coloneqq\left(\sum_{n=1}^{\infty}\|AF_n^*x_n\|^2\right)^\frac{1}{2}.
 \end{align*}
 \end{definition}

 \begin{remark}
 Definition \ref{SCHMIDTGENERALIZED} reduces to the definition  of class of Hilbert-Schmidt operators (as well as Hilbert-Schmidt norm), Definition \ref{SCHMIDTCLASS}, given by R. Schatten and J. von Neumann,  whenever $\mathcal{H}_0=\mathbb{K}$, map $\theta$ is conjugation, $x_n=1, F_n:\mathcal{H}\ni h \mapsto \langle h, e_n\rangle \in \mathbb{K} , \forall n \in \mathbb{N}$, where $\{e_n\}_n$ is an orthonormal basis for $\mathcal{H}$. Then $\{F_n\}_n$ is an  operator-valued orthonormal basis  in  $\mathcal{B}(\mathcal{H},\mathbb{K})$. Observe now that the conditions $\overline{F_mV^*A^*U^*F_n^*x_n}= F_nUAVF_m^*x_m,$ $ \overline{F_mV^*AU^*F_n^*x_n}= F_nUA^*VF_m^*x_m,  \forall n, m \in \mathbb{N} $ holds for all $A, U, V\in \mathcal{B}(\mathcal{H})$. Indeed, for   $A, U, V\in \mathcal{B}(\mathcal{H})$, $\overline{F_mV^*A^*U^*F_n^*x_n}=\overline{F_mV^*A^*U^*F_n^*1}=\overline{F_mV^*A^*U^*e_n}=\overline{\langle V^*A^*U^*e_n , e_m \rangle } =\overline{\langle e_n , UAVe_m\rangle}=\langle UAVe_m ,e_n \rangle=F_nUAVe_m=F_nUAVF_m^*x_m, \forall n, m \in \mathbb{N} $. Similarly $ \overline{F_mV^*AU^*F_n^*x_n}= F_nUA^*VF_m^*x_m,  \forall n, m \in \mathbb{N},\forall U,V \in \mathcal{B}(\mathcal{H}) $. 
\end{remark}
\begin{theorem}\label{IDEAL}
Let $ A, B \in \mathcal{S}_{\theta,F,x}(\mathcal{H})$, $\alpha \in \mathbb{K}$,  $T \in \mathcal{B}(\mathcal{H})$. Then
\begin{enumerate}[\upshape(i)]
\item $ \alpha A\in \mathcal{S}_{\theta,F,x}(\mathcal{H})$ and $\sigma_{\theta,F,x}(\alpha A)=|\alpha|\sigma_{\theta,F,x}(A)$.
\item $ A+B\in \mathcal{S}_{\theta,F,x}(\mathcal{H})$ and $\sigma_{\theta, F,x}(A+B)\leq\sigma_{\theta,F,x}(A)+\sigma_{\theta,F,x}(B)$.
\item  $\mathcal{S}_{\theta,F,x}(\mathcal{H})$ is a subspace of   $\mathcal{B}(\mathcal{H})$. 
\item $ A^*\in \mathcal{S}_{\theta,F,x}(\mathcal{H})$ and $ \sigma_{\theta,F,x}(A^*)= \sigma_{\theta,F,x}(A)$.
\item  $ TA\in \mathcal{S}_{\theta,F,x}(\mathcal{H})$ and  $\sigma_{\theta,F,x}(TA)\leq \|T\|\sigma_{\theta,F,x}(A)$.
\item  $ AT\in \mathcal{S}_{\theta,F,x}(\mathcal{H})$ and  $\sigma_{\theta,F,x}(AT)\leq \|T\|\sigma_{\theta,F,x}(A)$.
\item If there exist $a>0$ and $2\leq p <\infty$  such that
\begin{align*}
a\|h\|&\leq \left(\sum_{n=1}^{\infty}|\langle h, F_n^*x_n\rangle |^p\right)^\frac{1}{p},~ \forall h \in \mathcal{H}, 
\end{align*}		
 then $\|A\|\leq \sigma_{\theta,F,x}(A)/a, \forall A \in \mathcal{S}_{\theta,F,x}(\mathcal{H})$, $\sigma_{\theta,F,x}(\cdot)$ is a norm on      $\mathcal{S}_{\theta,F,x}(\mathcal{H})$  and  $\mathcal{S}_{\theta,F,x}(\mathcal{H})$ is complete in this norm.  In particular, if $\{F_n^*x_n\}_n $ is a frame for $\mathcal{H}$, then $\mathcal{S}_{\theta,F,x}(\mathcal{H})$ is complete.
 \item If $A^*A\leq B^*B$, then $\sigma_{\theta,F,x}(A)\leq \sigma_{\theta,F,x}(B)$.
 \item If $C \in \mathcal{B}(\mathcal{H})$, then $ C\in \mathcal{S}_{\theta,F,x}(\mathcal{H})$ if and only if $ [C] \in \mathcal{S}_{\theta,F,x}(\mathcal{H})$. In this case, $\sigma_{\theta,F,x}(C)=\sigma_{\theta,F,x}([C])$.   
 \item If $\|A\|<1$, then $\sigma_{\theta,F,x}(A^n)\rightarrow 0$ as $n\rightarrow \infty$.
 \item If $F_n^*x_n$ is an eigenvector for $A$ with eigenvalue $\lambda_n$ for each $n\in \mathbb{N}$, then $\sigma_{\theta,F,x}(A)^2=\sum_{n=1}^{\infty}|\lambda_n|^2\|x_n\|^2.$
\end{enumerate}
\end{theorem}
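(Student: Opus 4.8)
The plan is to treat the eleven parts in a dependency order, proving the easy structural facts first, then the adjoint identity, then the ideal and functional-calculus properties, and finally the norm domination and completeness. The common pattern is that each defining requirement of $\mathcal{S}_{\theta,F,x}(\mathcal{H})$ has two ingredients — summability of $\sum_n \|A F_n^* x_n\|^2$, controlled by the $\ell^2$-quantity $\sigma_{\theta,F,x}$, and the two $\theta$-compatibility conditions — and I would check both separately under each operation. Parts (i)--(iii) are immediate: the conjugate-linearity of $\theta$ absorbs the conjugate produced by $(\alpha A)^* = \overline{\alpha} A^*$, giving $\sigma_{\theta,F,x}(\alpha A) = |\alpha|\sigma_{\theta,F,x}(A)$ and preservation of both conditions, while additivity of $\theta$ together with Minkowski's inequality for the $\ell^2$-sum yields the subadditivity in (ii); (iii) is then formal. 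Likewise (viii) is direct, since $A^*A \le B^*B$ means $\|A h\| \le \|B h\|$ for every $h$, so the defining sum is dominated termwise; and (xi) follows from the orthonormality identity $\|F_n^* x_n\| = \|x_n\|$ (the $m=n$ case of $\langle F_n^* y, F_n^* z\rangle = \langle y,z\rangle$) applied to $A F_n^* x_n = \lambda_n F_n^* x_n$.

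The conceptual crux is part (iv), which is exactly what the two $\theta$-conditions were designed to make true. Membership $A^* \in \mathcal{S}_{\theta,F,x}(\mathcal{H})$ is free, because the two conditions for $A^*$ are literally the second and first conditions for $A$ with the roles of $A$ and $A^*$ interchanged. For the seminorm equality I would expand each term using the operator-valued orthonormal basis, $\|A^* F_n^* x_n\|^2 = \sum_m \|F_m A^* F_n^* x_n\|^2$, invoke the $U = V = I$ instance of the first condition, $\theta(F_m A^* F_n^* x_n) = F_n A F_m^* x_m$, and use that $\theta$ is an isometry to replace $\|F_m A^* F_n^* x_n\|$ by $\|F_n A F_m^* x_m\|$; a Tonelli interchange and relabeling then collapse the double sum to $\sum_m \|A F_m^* x_m\|^2 = \sigma_{\theta,F,x}(A)^2$. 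This simultaneously gives $\sigma_{\theta,F,x}(A^*) = \sigma_{\theta,F,x}(A)$ and finiteness.

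For the ideal properties I would exploit that the $\theta$-conditions are quantified over all $U, V \in \mathcal{B}(\mathcal{H})$. In (v) the bound $\|T A F_n^* x_n\| \le \|T\|\,\|A F_n^* x_n\|$ gives the $\sigma$-estimate, while the first condition for $TA$ is obtained by substituting $UT$ for $U$ in the first condition for $A$, and the second condition for $TA$ comes from substituting $T^* V$ for $V$ in the second condition for $A$. Part (vi) I would not redo by hand: writing $AT = (T^* A^*)^*$ and combining (iv) with (v) gives both membership and $\sigma_{\theta,F,x}(AT) \le \|T\|\sigma_{\theta,F,x}(A)$ at once. Part (ix) follows from the polar decomposition $C = W[C]$, $[C] = W^* C$ of Theorem \ref{POLARDECOMPOSITIONSCHATTEN}: the membership equivalence is then two applications of (v) with $T = W^*$ and $T = W$, and the identity $\|C h\| = \|[C] h\|$ (from $\langle C^* C h, h\rangle = \langle [C]^2 h, h\rangle$) gives $\sigma_{\theta,F,x}(C) = \sigma_{\theta,F,x}([C])$. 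Part (x) is a one-line consequence of (vi): $A^n = A\,A^{n-1}$ gives $\sigma_{\theta,F,x}(A^n) \le \|A\|^{n-1}\sigma_{\theta,F,x}(A) \to 0$.

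The main analytic obstacle is part (vii). For the domination $\|A\| \le \sigma_{\theta,F,x}(A)/a$ I would apply the hypothesis to $h = A^* g$ with $\|g\| \le 1$, so that $a\|A^* g\| \le (\sum_n |\langle g, A F_n^* x_n\rangle|^p)^{1/p}$; since $p \ge 2$ the $\ell^p$-norm is dominated by the $\ell^2$-norm, and Cauchy--Schwarz with $\|g\| \le 1$ bounds the right-hand side by $\sigma_{\theta,F,x}(A)$, giving $\|A\| = \|A^*\| \le \sigma_{\theta,F,x}(A)/a$. This forces $\sigma_{\theta,F,x}(A) = 0 \Rightarrow A = 0$, upgrading the seminorm to a norm. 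For completeness, a $\sigma$-Cauchy sequence $\{A_k\}$ is operator-norm Cauchy by the domination, hence converges to some $A \in \mathcal{B}(\mathcal{H})$; the $\theta$-conditions pass to the limit because $\theta$ is continuous and $A_k \to A$ in operator norm, and a truncation (Fatou-type) argument over finite index sets shows both $\sigma_{\theta,F,x}(A) \le \sup_k \sigma_{\theta,F,x}(A_k) < \infty$, so $A \in \mathcal{S}_{\theta,F,x}(\mathcal{H})$, and $\sigma_{\theta,F,x}(A_k - A) \to 0$. The frame case is the instance $p = 2$. The point to watch throughout is keeping the two $\theta$-conditions intact under each operation and under the limit; once those are confirmed, the norm estimates themselves are routine.
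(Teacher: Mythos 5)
Your proposal is correct and follows essentially the same route as the paper's proof: the $U=V=I_{\mathcal{H}}$ instance of the $\theta$-condition with a Tonelli interchange for (iv), the substitutions $U\mapsto UT$ and $V\mapsto T^*V$ for the ideal properties, the trick of applying the hypothesis to $A^*h$ together with the domination $\|\cdot\|_p\leq\|\cdot\|_2$ for $p\geq 2$ and a truncation argument for completeness in (vii), and the polar decomposition for (ix). The only departures are cosmetic: you make explicit that membership of $A^*$ is automatic from the symmetry of the two defining $\theta$-conditions (which the paper leaves implicit), and in (ix) you obtain $\sigma_{\theta,F,x}(C)=\sigma_{\theta,F,x}([C])$ from the termwise identity $\|Ch\|=\|[C]h\|$ rather than the paper's sandwich $\sigma_{\theta,F,x}(C)\leq\sigma_{\theta,F,x}([C])\leq\sigma_{\theta,F,x}(C)$ via part (v).
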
 
\begin{proof}
\begin{enumerate}[\upshape(i)]
\item $\sigma_{\theta, F,x}(\alpha A)^2=\sum_{n=1}^{\infty}\|\alpha AF_n^*x_n\|^2=|\alpha|^2\sum_{n=1}^{\infty}\|AF_n^*x_n\|^2=|\alpha|^2\sigma_{\theta,F,x}(A)^2$, $\theta(F_mV^*(\alpha A)^*U^*F_n^*x_n)$ $=\alpha \theta(F_mV^* A^*U^*F_n^*x_n)= \alpha F_nUAVF_m^*x_m=F_nU(\alpha A)VF_m^*x_m$, $\forall n,m \in \mathbb{N}, \forall U,V \in \mathcal{B}(\mathcal{H})$. Similarly $\theta(F_mV^*(\alpha A)U^*F_n^*x_n)=F_nU(\alpha A)^*VF_m^*x_m$, $\forall n,m \in \mathbb{N}, \forall U,V \in \mathcal{B}(\mathcal{H})$.
\item $\sigma_{\theta,F,x}(A+B)=\left(\sum_{n=1}^{\infty}\|AF_n^*x_n+BF_n^*x_n\|^2\right)^{1/2}\leq \left(\sum_{n=1}^{\infty}\|AF_n^*x_n\|^2\right)^{1/2}+\left(\sum_{n=1}^{\infty}\|BF_n^*x_n\|^2\right)^{1/2}=\sigma_{\theta,F,x}(A)+\sigma_{\theta,F,x}(B)$, $\theta(F_mV^*(A+B)^*U^*F_n^*x_n)=\theta(F_mV^*A^*U^*F_n^*x_n)+\theta(F_mV^*B^*U^*F_n^*x_n)=F_nUAVF_m^*x_m+F_nUBVF_m^*x_m=F_nU(A+B)VF_m^*x_m$, and $\theta(F_mV^*(A+B)U^*F_n^*x_n)=F_nU(A+B)^*VF_m^*x_m$, $\forall n,m \in \mathbb{N}, \forall U,V \in \mathcal{B}(\mathcal{H})$.
\item comes from (i) and (ii).
\item For every $ k \in \mathbb{N}$, 
\begin{align*}
\sum_{n=1}^{k}\|A^*F_n^*x_n\|^2&=\sum_{n=1}^{k}\sum_{m=1}^{\infty}\|F_mA^*F_n^*x_n\|^2=\sum_{n=1}^{k}\sum_{m=1}^{\infty}\|\theta(F_mA^*F_n^*x_n)\|^2\\
&=\sum_{n=1}^{k}\sum_{m=1}^{\infty}\|F_nAF_m^*x_m\|^2
=\sum_{m=1}^{\infty}\sum_{n=1}^{k}\|F_nAF_m^*x_m\|^2\\
&\leq \sum_{m=1}^{\infty}\sum_{n=1}^{\infty}\|F_n(AF_m^*x_m)\|^2=\sum_{m=1}^{\infty}\|AF_m^*x_m\|^2.
\end{align*}
Therefore $\sum_{n=1}^{\infty}\|A^*F_n^*x_n\|^2<\infty$. A similar procedure gives $	\sigma_{\theta,F,x}(A^*)^2=\sum_{n=1}^{\infty}\|A^*F_n^*x_n\|^2=\sum_{m=1}^{\infty}\sum_{n=1}^{\infty}\|F_n(AF_m^*x_m)\|^2=\sum_{m=1}^{\infty}\|AF_m^*x_m\|^2=\sigma_{\theta,F,x}(A)^2.$
\item 	$\sigma_{\theta,F,x}(TA)^2=\sum_{n=1}^{\infty}\|TAF_n^*x_n\|^2\leq\|T\|^2\sum_{n=1}^{\infty}\|AF_n^*x_n\|^2=\|T\|^2\sigma_{\theta,F,x}(A)^2$, $\theta(F_mV^*(TA)^*U^*F_n^*x_n)$ $=\theta(F_mV^*A^*(T^*U^*)F_n^*x_n)= F_n(T^*U^*)^*AVF_m^*x_m=F_nU(TA)VF_m^*x_m $ and $\theta(F_mV^*(TA)U^*F_n^*x_n)$ $=F_nU(TA)^*VF_m^*x_m $, $\forall n,m \in \mathbb{N}, \forall U,V \in \mathcal{B}(\mathcal{H})$.
\item It is enough to show $(AT)^* \in \mathcal{S}_{\theta, F,x}(\mathcal{H})$ (using (iv)). For,  $\sigma_{\theta,F,x}((AT)^*)^2=\sum_{n=1}^{\infty}\|T^*A^*F_n^*x_n\|^2\leq\|T^*\|^2\sum_{n=1}^{\infty}\|A^*F_n^*x_n\|^2=\|T\|^2\sigma_{F,x}(A^*)^2=\|T\|^2\sigma_{\theta,F,x}(A)^2$. Further,  $\theta(F_mV^*(AT)^*U^*F_n^*x_n)$ $=\theta(F_m(V^*T^*)A^*U^*F_n^*x_n)= F_nUA(V^*T^*)^*F_m^*x_m=F_nU(AT)VF_m^*x_m $, $\theta(F_mV^*(AT)U^*F_n^*x_n)=F_nU(AT)^*VF_m^*x_m $, $\forall n,m \in \mathbb{N}, \forall U,V \in \mathcal{B}(\mathcal{H})$.
\item For all $h \in \mathcal{H}$,

\begin{align*}
a\|A^*h\|&\leq \left(\sum_{n=1}^{\infty}|\langle A^*h, F_n^*x_n\rangle |^p\right)^\frac{1}{p}=\left(\sum_{n=1}^{\infty}|\langle h, AF_n^*x_n\rangle |^p\right)^\frac{1}{p}\\
&\leq \|h\|\left(\sum_{n=1}^{\infty} \|AF_n^*x_n \|^p\right)^\frac{1}{p}\leq \|h\|\left(\sum_{n=1}^{\infty} \|AF_n^*x_n \|^2\right)^\frac{1}{2}=\|h\|\sigma_{\theta,F,x}(A).
\end{align*}
This gives $a\|A^*\|=a\|A\|\leq\sigma_{\theta,F,x}(A)$ which tells $\sigma_{\theta,F,x}(A)=0$ $\Rightarrow$ $A=0$. 
Let $\{A_n\}_{n=1}^{\infty}$ be a Cauchy sequence in  $\mathcal{S}_{\theta, F,x}(\mathcal{H})$. Then $\|A_n-A_m\|\leq \sigma_{\theta,F,x}(A_n-A_m)/a, \forall n,m\in \mathbb{N}$ tells that $\{A_n\}_{n=1}^{\infty}$ is a Cauchy sequence in $\mathcal{B}(\mathcal{H})$ w.r.t. operator-norm. Let $A\coloneqq \lim_{n \rightarrow \infty}A_n$, in the operator-norm. Choose $N \in \mathbb{N}$ such that $\sum_{k=1}^{\infty}\|(A_n-A_m)F_k^*x_k\|^2=\sigma_{\theta,F,x}(A_n-A_m)<1,\forall n,m \geq N$. This gives that for each $r\in \mathbb{N}$, $\sum_{k=1}^{r}\|(A_n-A_m)F_k^*x_k\|^2<1,\forall n,m \geq N$ $\Rightarrow$ for each $r\in \mathbb{N}$, $\sum_{k=1}^{r}\|(A_n-A)F_k^*x_k\|^2=\lim_{m\rightarrow \infty}\sum_{k=1}^{r}\|(A_n-A_m)F_k^*x_k\|^2\leq1,\forall n \geq N$ $\Rightarrow$ $\sum_{k=1}^{\infty}\|(A_n-A)F_k^*x_k\|^2\leq1, \forall n \geq N$.

Now for $n, m \in \mathbb{N}$ and $U,V \in \mathcal{B}(\mathcal{H})$, $\theta(F_mV^*(A-A_N)^*U^*F_n^*x_n)=\theta(F_mV^*\lim_{k \rightarrow \infty}(A_k^*-A_N^*)U^*F_n^*x_n)=\lim_{k \rightarrow \infty}\theta(F_mV^*(A_k^*-A_N^*)U^*F_n^*x_n)=\lim_{k \rightarrow \infty}(\theta(F_mV^*A_k^*U^*F_n^*x_n)-\theta(F_mV^*A_N^*U^*F_n^*x_n))$ $=\lim_{k \rightarrow \infty}(F_nUA_kVF_m^*x_m-F_nUA_NVF_m^*x_m)=F_nU\lim_{k \rightarrow \infty}(A_k-A_N)VF_m^*x_m= F_nU(A-A_N)VF_m^*x_m$. Similarly  $\theta(F_mV^*(A-A_N)U^*F_n^*x_n)= F_nU(A-A_N)^*VF_m^*x_m$. Therefore $A-A_N\in \mathcal{S}_{\theta, F,x}(\mathcal{H})$. Hence  $A=(A-A_N)+A_N\in \mathcal{S}_{\theta, F,x}(\mathcal{H})$.
\item $\sigma_{\theta,F,x}(A)^2=\sum_{n=1}^{\infty}\|AF_n^*x_n\|^2=\sum_{n=1}^{\infty}\langle A^*AF_n^*x_n, F_n^*x_n\rangle\leq\sum_{n=1}^{\infty}\langle B^*BF_n^*x_n,F_n^* x_n\rangle=\sum_{n=1}^{\infty}\|BF_n^*x_n\|^2$ $=\sigma_{\theta,F,x}(B)^2  $.
\item Let $C=W[C]$ be the polar decomposition of $C$ as in Theorem \ref{POLARDECOMPOSITIONSCHATTEN}. $(\Rightarrow)$ From (ii) in Theorem \ref{POLARDECOMPOSITIONSCHATTEN} we have  $[C]=W^*C$. Now (v) tells that $[C]=W^*C \in  \mathcal{S}_{\theta,F,x}(\mathcal{H})$. $(\Leftarrow)$ Polar decomposition of $C$ and (v) give $C \in  \mathcal{S}_{\theta,F,x}(\mathcal{H})$. Further, using (v), $\sigma_{\theta,F,x}(C)=\sigma_{\theta,F,x}(W[C])\leq\|W\|\sigma_{\theta,F,x}([C])=\sigma_{\theta,F,x}([C])=\sigma_{\theta,F,x}(W^*C)\leq \|W^*\|\sigma_{\theta,F,x}(C)=\sigma_{\theta,F,x}(C)$.
\item $0 \leq\sigma_{\theta,F,x}(A^n)\leq \|A^{n-1}\|\sigma_{\theta,F,x}(A)\leq \|A\|^{n-1}\sigma_{\theta,F,x}(A)\rightarrow 0$ as $n\rightarrow \infty$.
\item $\sigma_{\theta,F,x}(A)^2=\sum_{n=1}^{\infty}|\lambda_n|^2\|F_n^*x_n\|^2=\sum_{n=1}^{\infty}|\lambda_n|^2\|x_n\|^2.$
\end{enumerate}
\end{proof}
Theorem \ref{IDEAL} says that $\mathcal{S}_{\theta,F,x}(\mathcal{H})$ is a two sided  ideal in $\mathcal{B}(\mathcal{H})$. Further, if assumption in (vii) holds, then it is two sided  closed ideal.

 \begin{lemma}\label{TRACECLASSEXISTENCELEMMA}
 If $ A, B \in \mathcal{S}_{\theta,F,x}(\mathcal{H})$, then  the series $\sum_{n=1}^{\infty}|\langle AF_n^*x_n, BF_n^*x_n\rangle |$ converges. Further, if $\mathcal{H}$ and  $\mathcal{H}_0 $ are over $ \mathbb{C}$, then 
 \begin{align*}
 4\sum_{n=1}^{\infty}\langle AF_n^*x_n, BF_n^*x_n\rangle =& \sigma_{\theta,F,x}(A+B)^2-\sigma_{\theta,F,x}(A-B)^2+i\sigma_{\theta,F,x}(A+iB)^2-i\sigma_{\theta,F,x}(A-iB)^2 
 \end{align*}
  and  if $\mathcal{H}$ and  $\mathcal{H}_0 $ are over $ \mathbb{R}$, then
 \begin{align*}
  4\sum_{n=1}^{\infty}\langle AF_n^*x_n, BF_n^*x_n\rangle =&\sigma_{\theta,F,x}(A+B)^2-\sigma_{\theta,F,x}(A-B)^2.
 \end{align*}
 \end{lemma}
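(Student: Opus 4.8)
The plan is to first settle absolute convergence, and then obtain both identities by applying the scalar polarization identity term by term and summing.

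\textbf{Convergence.} I would begin from the Cauchy--Schwarz inequality in $\mathcal{H}$, giving $|\langle AF_n^*x_n, BF_n^*x_n\rangle| \le \|AF_n^*x_n\|\,\|BF_n^*x_n\|$ for every $n \in \mathbb{N}$. Applying the Cauchy--Schwarz inequality to the two nonnegative sequences $\{\|AF_n^*x_n\|\}_n$ and $\{\|BF_n^*x_n\|\}_n$ then yields
\begin{align*}
\sum_{n=1}^{\infty}|\langle AF_n^*x_n, BF_n^*x_n\rangle| \le \left(\sum_{n=1}^{\infty}\|AF_n^*x_n\|^2\right)^{\frac12}\left(\sum_{n=1}^{\infty}\|BF_n^*x_n\|^2\right)^{\frac12} = \sigma_{\theta,F,x}(A)\,\sigma_{\theta,F,x}(B),
\end{align*}
which is finite precisely because $A,B \in \mathcal{S}_{\theta,F,x}(\mathcal{H})$. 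Hence the series converges absolutely.

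\textbf{Polarization, termwise.} For the complex case I would apply the polarization identity $4\langle u,v\rangle = \|u+v\|^2 - \|u-v\|^2 + i\|u+iv\|^2 - i\|u-iv\|^2$ in $\mathcal{H}$ with $u = AF_n^*x_n$ and $v = BF_n^*x_n$. Using $AF_n^*x_n \pm BF_n^*x_n = (A\pm B)F_n^*x_n$ and $AF_n^*x_n \pm iBF_n^*x_n = (A\pm iB)F_n^*x_n$, this gives, for each $n$,
\begin{align*}
4\langle AF_n^*x_n, BF_n^*x_n\rangle = \|(A+B)F_n^*x_n\|^2 - \|(A-B)F_n^*x_n\|^2 + i\|(A+iB)F_n^*x_n\|^2 - i\|(A-iB)F_n^*x_n\|^2.
\end{align*}

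\textbf{Summation.} Since $\mathcal{S}_{\theta,F,x}(\mathcal{H})$ is a subspace by Theorem \ref{IDEAL}, the combinations $A \pm B$ and $A \pm iB$ all lie in $\mathcal{S}_{\theta,F,x}(\mathcal{H})$, so each of the four series $\sum_n \|(A\pm B)F_n^*x_n\|^2$ and $\sum_n \|(A\pm iB)F_n^*x_n\|^2$ converges, to $\sigma_{\theta,F,x}(A\pm B)^2$ and $\sigma_{\theta,F,x}(A\pm iB)^2$ respectively. Summing the termwise identity over $n$ and splitting the left-hand side, whose convergence was established above, into this finite linear combination of convergent series produces the claimed complex formula. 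For the real case the identical argument with $4\langle u,v\rangle = \|u+v\|^2 - \|u-v\|^2$ yields the second formula. The steps are routine; the only point requiring care is the appeal to the subspace property of Theorem \ref{IDEAL}, which guarantees that every $\sigma_{\theta,F,x}$-term on the right is finite and thereby legitimizes interchanging the summation with the finite linear combination.
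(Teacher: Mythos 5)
Your proof is correct and follows essentially the same route as the paper: Cauchy--Schwarz applied termwise in $\mathcal{H}$ and then to the square-summable sequences $\{\|AF_n^*x_n\|\}_n$, $\{\|BF_n^*x_n\|\}_n$ gives the bound $\sigma_{\theta,F,x}(A)\sigma_{\theta,F,x}(B)$ for absolute convergence, after which the scalar polarization identity is applied termwise and summed. Your explicit appeal to the subspace property from Theorem \ref{IDEAL} to guarantee finiteness of the four $\sigma_{\theta,F,x}$-terms is a minor extra justification that the paper leaves implicit.
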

 \begin{proof}
 Let $	\mathcal{H} $ and $ \mathcal{H}_0 $ be over $ \mathbb{C}$. For all $m \in \mathbb{N}$, we have  $\sum_{n=1}^{m}|\langle AF_n^*x_n, BF_n^*x_n\rangle |\leq (\sum_{n=1}^{m}\|AF_n^*x_n\|^2)^{1/2}$ $(\sum_{n=1}^{m}\|BF_n^*x_n\|^2)^{1/2}\leq \sigma_{\theta, F,x}(A)\sigma_{\theta, F,x}(B)$. We next use the polarization identity,
 
 \begin{align*}
 &4\sum_{n=1}^{\infty}\langle AF_n^*x_n, BF_n^*x_n\rangle =\\
 & \sum_{n=1}^{\infty}(\|AF_n^*x_n+BF_n^*x_n\|^2-\|AF_n^*x_n-BF_n^*x_n\|^2+i\|AF_n^*x_n+iBF_n^*x_n\|^2-i\|AF_n^*x_n-iBF_n^*x_n\|^2)\\
 &=\sum_{n=1}^{\infty}\|(A+B)F_n^*x_n\|^2-\sum_{n=1}^{\infty}\|(A-B)F_n^*x_n\|^2+i\sum_{n=1}^{\infty}\|(A+iB)F_n^*x_n\|^2-i\sum_{n=1}^{\infty}\|(A-iB)F_n^*x_n\|^2\\
 &=\sigma_{\theta,F,x}(A+B)^2-\sigma_{\theta,F,x}(A-B)^2+i\sigma_{\theta,F,x}(A+iB)^2-i\sigma_{\theta,F,x}(A-iB)^2.
 \end{align*}	
 The argument is similar if $	\mathcal{H} $ and $ \mathcal{H}_0 $ are over $ \mathbb{R}$.
 \end{proof}
 \begin{definition}
 Given $ A, B \in \mathcal{S}_{\theta, F,x}(\mathcal{H})$, we define 
 \begin{align}\label{IPWELL}
 \langle A, B\rangle \coloneqq \sum_{n=1}^{\infty}\langle AF_n^*x_n, BF_n^*x_n\rangle.
 \end{align}	
 \end{definition}\label{HILBERTSCHMIDTHHILBERT}
 Lemma \ref{TRACECLASSEXISTENCELEMMA} says that the series in Equation (\ref{IPWELL}) is well-defined. We observe that $\sigma_{\theta, F,x}(A)^2= \langle A, A\rangle,  \forall A \in \mathcal{S}_{\theta,F,x}(\mathcal{H})$.
 \begin{proposition}\label{SHIFTPROPOSITION}
 \begin{enumerate}[\upshape(i)]
 \item $\langle A, A\rangle\geq0 ,\forall A \in \mathcal{S}_{\theta,F,x}(\mathcal{H})$.
 \item If there exist $a>0$ and $2\leq p <\infty$  such that
 \begin{align*}
 a\|h\|&\leq \left(\sum_{n=1}^{\infty}|\langle h, F_n^*x_n\rangle |^p\right)^\frac{1}{p},~ \forall h \in \mathcal{H}, 
 \end{align*}		
 then $\langle A, A\rangle=0$ implies that  $A=0.$
 \item $\langle A+B, C \rangle =\langle A, C \rangle+\langle B, C \rangle $, $\langle \alpha A, B\rangle=\alpha \langle  A, B \rangle, \forall A,B,C \in \mathcal{S}_{\theta,F,x}(\mathcal{H}), \forall \alpha \in \mathbb{K}$.
 \item $\overline{\langle A, B\rangle}=\langle B, A\rangle,  \forall A,B \in \mathcal{S}_{\theta,F,x}(\mathcal{H})$.
 \item $\langle A^*, B^*\rangle=\overline{\langle A, B\rangle}, \forall A,B \in \mathcal{S}_{\theta,F,x}(\mathcal{H})$.
 \item  If $ A, B \in \mathcal{S}_{\theta,F,x}(\mathcal{H})$ and $T \in \mathcal{B}(\mathcal{H})$, then $ \langle TA, B\rangle= \langle A, T^*B\rangle$ and $  \langle AT, B\rangle= \langle A, BT^*\rangle$.
 \item $ |\langle A, B \rangle| \leq \sigma_{\theta,F,x}(A)\sigma_{\theta,F,x}(B),  \forall A,B \in \mathcal{S}_{\theta,F,x}(\mathcal{H})$.
 \end{enumerate}
 \end{proposition}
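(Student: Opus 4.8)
The plan is to verify the seven items as the inner-product axioms for the form $\langle\cdot,\cdot\rangle$ on $\mathcal{S}_{\theta,F,x}(\mathcal{H})$, relying throughout on two facts already in hand: Lemma \ref{TRACECLASSEXISTENCELEMMA}, which guarantees that every series appearing below converges absolutely and that $\sigma_{\theta,F,x}(A)^2=\langle A,A\rangle$; and Theorem \ref{IDEAL}, which guarantees that each operator produced (namely $\alpha A$, $A+B$, $A^*$, $TA$, $AT$) again lies in $\mathcal{S}_{\theta,F,x}(\mathcal{H})$, so that all the pairings are meaningful. I will also use repeatedly the reconstruction identity $\langle u,v\rangle=\sum_n\langle F_nu,F_nv\rangle$ for $u,v\in\mathcal{H}$, which follows from $\sum_n F_n^*F_n=I_{\mathcal{H}}$ (equivalently from $\sum_n\|F_nh\|^2=\|h\|^2$), together with the fact that the conjugate-linear isometry $\theta$ satisfies $\langle \theta u,\theta v\rangle=\langle v,u\rangle$.

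Items (i), (iii), (iv) and (vii) are termwise and routine. For (i), $\langle A,A\rangle=\sum_n\|AF_n^*x_n\|^2\ge0$. For (iii), I would substitute $(A+B)F_n^*x_n$ and $(\alpha A)F_n^*x_n$ into each summand and split the absolutely convergent series. Item (iv) conjugates each term via $\overline{\langle u,v\rangle}=\langle v,u\rangle$. Item (vii) is the Cauchy--Schwarz inequality applied to the $\ell^2$-sequences $\{\|AF_n^*x_n\|\}_n$ and $\{\|BF_n^*x_n\|\}_n$, exactly as in the opening estimate of the proof of Lemma \ref{TRACECLASSEXISTENCELEMMA}. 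For (ii), I would combine $\langle A,A\rangle=\sigma_{\theta,F,x}(A)^2$ with the bound $\|A\|\le\sigma_{\theta,F,x}(A)/a$ from Theorem \ref{IDEAL}(vii): under the stated hypothesis, $\langle A,A\rangle=0$ forces $\sigma_{\theta,F,x}(A)=0$, hence $\|A\|=0$ and $A=0$.

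The substantive step is (v), and this is where the defining relations of $\mathcal{S}_{\theta,F,x}(\mathcal{H})$ (taken with $U=V=I_{\mathcal{H}}$, i.e. $\theta(F_mA^*F_n^*x_n)=F_nAF_m^*x_m$) are finally used. First I would resolve the identity through the basis to write $\langle A^*F_n^*x_n,B^*F_n^*x_n\rangle=\sum_m\langle F_mA^*F_n^*x_n,F_mB^*F_n^*x_n\rangle$. Because $\theta$ is a conjugate-linear isometry, each summand equals $\langle \theta(F_mB^*F_n^*x_n),\theta(F_mA^*F_n^*x_n)\rangle$, which by the defining relation is $\langle F_nBF_m^*x_m,F_nAF_m^*x_m\rangle$. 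Summing over $n$, interchanging the order of the double sum, and applying the reconstruction identity once more collapses $\sum_n\langle F_nBF_m^*x_m,F_nAF_m^*x_m\rangle$ to $\langle BF_m^*x_m,AF_m^*x_m\rangle$, so that $\langle A^*,B^*\rangle=\sum_m\langle BF_m^*x_m,AF_m^*x_m\rangle=\langle B,A\rangle=\overline{\langle A,B\rangle}$ by (iv). I expect the one genuine obstacle to be justifying the interchange of the two summations; this is handled by the same Cauchy--Schwarz estimate as in (vii): for each fixed $m$, $\sum_n\|F_nAF_m^*x_m\|\,\|F_nBF_m^*x_m\|\le\|AF_m^*x_m\|\,\|BF_m^*x_m\|$, and summing over $m$ bounds the whole double series by $\sigma_{\theta,F,x}(A)\sigma_{\theta,F,x}(B)<\infty$, so Fubini for absolutely summable double series applies.

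Finally, (vi) splits into two identities. The left-multiplication identity $\langle TA,B\rangle=\langle A,T^*B\rangle$ is immediate termwise from the adjoint relation in $\mathcal{H}$, since $\langle TAF_n^*x_n,BF_n^*x_n\rangle=\langle AF_n^*x_n,T^*BF_n^*x_n\rangle$. For the right-multiplication identity $\langle AT,B\rangle=\langle A,BT^*\rangle$ a direct computation is awkward (the operator $T$ sits on the wrong side of $F_n^*x_n$), so instead I would bootstrap from (v) and the first identity: write $\langle AT,B\rangle=\overline{\langle (AT)^*,B^*\rangle}=\overline{\langle T^*A^*,B^*\rangle}$, apply the left-multiplication identity with $T^*$ in place of $T$, and then invoke (v) once more to land on $\langle A,BT^*\rangle$. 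All operators occurring here remain in $\mathcal{S}_{\theta,F,x}(\mathcal{H})$ by the ideal property established in Theorem \ref{IDEAL}(v),(vi).
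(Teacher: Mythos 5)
Your proposal is correct, and for items (i)--(iv), (vi) and (vii) it coincides with the paper's proof: (i), (iii), (iv) are the same termwise manipulations, (ii) is the same appeal to Theorem \ref{IDEAL}(vii), (vii) is the same Cauchy--Schwarz estimate, and your bootstrap for $\langle AT,B\rangle=\langle A,BT^*\rangle$ via (iv), (v) and the left-multiplication identity is exactly the paper's chain $\langle AT, B\rangle=\overline{\langle B, AT\rangle }=\langle B^*, (AT)^*\rangle=\overline{\langle T^*A^*, B^*\rangle}=\overline{\langle A^*, TB^*\rangle}=\langle A, BT^*\rangle$.

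Where you genuinely diverge is item (v). The paper does not touch the defining $\theta$-relations there at all: it invokes the polarization formula of Lemma \ref{TRACECLASSEXISTENCELEMMA} to write $4\langle A^*,B^*\rangle$ as a combination of $\sigma_{\theta,F,x}((A\pm B)^*)^2$ and $\sigma_{\theta,F,x}((A\mp iB)^*)^2$ (using $A^*+iB^*=(A-iB)^*$), and then reduces everything to the already-proved $*$-invariance $\sigma_{\theta,F,x}(C^*)=\sigma_{\theta,F,x}(C)$ from Theorem \ref{IDEAL}(iv), treating $\mathbb{K}=\mathbb{C}$ and $\mathbb{K}=\mathbb{R}$ separately. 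You instead prove (v) directly: resolve through the basis via $\langle u,v\rangle=\sum_n\langle F_nu,F_nv\rangle$, apply $\langle\theta u,\theta v\rangle=\langle v,u\rangle$ together with the defining relation $\theta(F_mA^*F_n^*x_n)=F_nAF_m^*x_m$ (the case $U=V=I_{\mathcal{H}}$), and interchange the double sum after a Cauchy--Schwarz bound by $\sigma_{\theta,F,x}(A)\sigma_{\theta,F,x}(B)$. This is in effect the inner-product upgrade of the computation the paper carries out for norms in the proof of Theorem \ref{IDEAL}(iv) (where nonnegativity makes the interchange free, whereas you correctly need absolute convergence for Fubini). Your route buys uniformity in the scalar field and makes transparent exactly where the membership conditions of $\mathcal{S}_{\theta,F,x}(\mathcal{H})$ are used; the paper's route is shorter because it recycles Lemma \ref{TRACECLASSEXISTENCELEMMA} and Theorem \ref{IDEAL}(iv) rather than redoing the double-series argument. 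Two cosmetic points: the identity $\sigma_{\theta,F,x}(A)^2=\langle A,A\rangle$ is the observation following the definition rather than part of Lemma \ref{TRACECLASSEXISTENCELEMMA}, and your auxiliary facts ($\langle u,v\rangle=\sum_n\langle F_nu,F_nv\rangle$ from $\sum_n\|F_nh\|^2=\|h\|^2$, and $\langle\theta u,\theta v\rangle=\langle v,u\rangle$ from the isometry of $\theta$) both require a polarization step of their own, which holds in the real case as well because the relevant forms are symmetric --- worth one line if you write this out in full.
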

 \begin{proof}
 \begin{enumerate}[\upshape(i)]
\item $\langle A, A\rangle=\sum_{n=1}^{\infty}\| AF_n^*x_n\|^2\geq0$.
\item From (vii) in Theorem \ref{IDEAL}, $0=\langle A, A\rangle=\sigma_{\theta,F,x}(A)\geq a\|A\|$.
\item $\langle A+B, C \rangle =\sum_{n=1}^{\infty}\langle AF_n^*x_n, CF_n^*x_n\rangle+\sum_{n=1}^{\infty}\langle BF_n^*x_n, CF_n^*x_n\rangle =\langle A, C \rangle+\langle B, C \rangle $, $\langle \alpha A, B\rangle=\alpha\sum_{n=1}^{\infty}\langle AF_n^*x_n, BF_n^*x_n\rangle=\alpha \langle  A, B \rangle$.
\item $\overline{\langle A, B\rangle}=\sum_{n=1}^{\infty}\langle BF_n^*x_n, AF_n^*x_n\rangle=\langle B, A\rangle$.
\item We consider the case $\mathbb{K}=\mathbb{C}$, the case $\mathbb{K}=\mathbb{R}$ is similar. For $A,B \in \mathcal{S}_{\theta, F,x}(\mathcal{H})$, by taking the help of Lemma \ref{TRACECLASSEXISTENCELEMMA}
 \begin{align*}
&4\langle A^*, B^*\rangle=4\sum_{n=1}^{\infty}\langle A^*F_n^*x_n, B^*F_n^*x_n\rangle\\
&=\sigma_{\theta,F,x}(A^*+B^*)^2-\sigma_{\theta,F,x}(A^*-B^*)^2+i\sigma_{\theta,F,x}(A^*+iB^*)^2-i\sigma_{\theta,F,x}(A^*-iB^*)^2\\
&=\sigma_{\theta,F,x}((A+B)^*)^2-\sigma_{\theta,F,x}((A-B)^*)^2+i\sigma_{\theta,F,x}((A-iB)^*)^2-i\sigma_{\theta,F,x}((A+iB)^*)^2\\
&=\sigma_{\theta,F,x}(A+B)^2-\sigma_{\theta,F,x}(A-B)^2+i\sigma_{\theta,F,x}(A-iB)^2-i\sigma_{\theta,F,x}(A+iB)^2\\
 &=\overline{\sigma_{\theta,F,x}(A+B)^2-\sigma_{\theta,F,x}(A-B)^2+i\sigma_{\theta,F,x}(A+iB)^2-i\sigma_{\theta,F,x}(A-iB)^2}\\
 &=4\overline{\sum_{n=1}^{\infty}\langle AF_n^*x_n, BF_n^*x_n\rangle}=4\overline{\langle A, B\rangle}.
 \end{align*}
 \item $  \langle TA, B\rangle=\sum_{n=1}^{\infty}\langle TAF_n^*x_n, BF_n^*x_n\rangle=\sum_{n=1}^{\infty}\langle AF_n^*x_n, T^*BF_n^*x_n\rangle=\langle A, T^*B\rangle,$ $  \langle AT, B\rangle=\overline{\langle B, AT\rangle }=\langle B^*, (AT)^*\rangle=\overline{\langle  T^*A^*, B^*\rangle}=\overline{\langle  A^*, TB^*\rangle}=\overline{\langle  A^*, (BT^*)^*\rangle}= \langle A, BT^*\rangle$.
 \item $ |\langle A, B \rangle|\leq \sum_{n=1}^{\infty}\|AF_n^*x_n\|\|BF_n^*x_n\|\leq (\sum_{n=1}^{\infty}\|AF_n^*x_n\|^2)^{1/2}(\sum_{n=1}^{\infty}\|BF_n^*x_n\|^2)^{1/2}=  \sigma_{\theta,F,x}(A)\sigma_{\theta,F,x}(B)$.
 \end{enumerate}
 \end{proof}

\section{Generalized trace class}\label{TRACECLASSSECTION}
 \begin{lemma}\label{GTRACECLASSEXISTENCELEMMA}
 If $A=BC$, where $B,C \in \mathcal{S}_{\theta, F,x}(\mathcal{H})$, then  the series $\sum_{n=1}^{\infty}|\langle AF_n^*x_n,F_n^*x_n \rangle |$ converges.	
 \end{lemma}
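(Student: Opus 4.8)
The plan is to reduce the claim to the Cauchy--Schwarz inequality, both in the inner product on $\mathcal{H}$ and in the sum over $n$, and then to invoke the membership and norm properties already established in Theorem \ref{IDEAL}. Since $A = BC$, the first step is to move one factor across the inner product: for each $n \in \mathbb{N}$,
\begin{align*}
\langle A F_n^* x_n, F_n^* x_n \rangle = \langle B C F_n^* x_n, F_n^* x_n \rangle = \langle C F_n^* x_n, B^* F_n^* x_n \rangle.
\end{align*}
Applying the Cauchy--Schwarz inequality in $\mathcal{H}$ to each term then gives the pointwise estimate $|\langle A F_n^* x_n, F_n^* x_n \rangle| \leq \|C F_n^* x_n\|\,\|B^* F_n^* x_n\|$.

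Next I would sum over $n$ and apply the Cauchy--Schwarz inequality for sequences, obtaining
\begin{align*}
\sum_{n=1}^{\infty} |\langle A F_n^* x_n, F_n^* x_n \rangle| \leq \left(\sum_{n=1}^{\infty}\|C F_n^* x_n\|^2\right)^{1/2}\left(\sum_{n=1}^{\infty}\|B^* F_n^* x_n\|^2\right)^{1/2}.
\end{align*}
The first factor is precisely $\sigma_{\theta,F,x}(C)$, which is finite because $C \in \mathcal{S}_{\theta,F,x}(\mathcal{H})$ by hypothesis.

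The one point that requires a previously proved result is the second factor: I need $\sum_{n=1}^{\infty}\|B^* F_n^* x_n\|^2 < \infty$. This is not immediate from $B \in \mathcal{S}_{\theta,F,x}(\mathcal{H})$ alone, since the defining condition controls $\sum\|BF_n^*x_n\|^2$ rather than its adjoint analogue. Here I would invoke part (iv) of Theorem \ref{IDEAL}, which guarantees that $B^* \in \mathcal{S}_{\theta,F,x}(\mathcal{H})$ with $\sigma_{\theta,F,x}(B^*) = \sigma_{\theta,F,x}(B)$; hence the second factor equals $\sigma_{\theta,F,x}(B) < \infty$. Combining the two bounds yields convergence, with the clean estimate $\sum_{n=1}^{\infty} |\langle A F_n^* x_n, F_n^* x_n \rangle| \leq \sigma_{\theta,F,x}(B)\,\sigma_{\theta,F,x}(C)$.

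The proof is essentially routine; the only real obstacle is recognizing that the adjoint must be shifted onto $B$ and that the resulting adjoint-summability is exactly the content of Theorem \ref{IDEAL}(iv). Everything else is two applications of Cauchy--Schwarz, so I would keep the write-up short and simply cite the closure-under-adjoints property at the key step.
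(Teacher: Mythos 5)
Your proposal is correct and follows essentially the same route as the paper: the paper likewise rewrites $\langle AF_n^*x_n, F_n^*x_n\rangle = \langle CF_n^*x_n, B^*F_n^*x_n\rangle$, notes $B^* \in \mathcal{S}_{\theta,F,x}(\mathcal{H})$ via Theorem \ref{IDEAL}(iv), and then cites Lemma \ref{TRACECLASSEXISTENCELEMMA}, whose proof is exactly the double Cauchy--Schwarz estimate you carry out explicitly. Your write-up merely inlines that lemma, arriving at the same bound $\sigma_{\theta,F,x}(B)\,\sigma_{\theta,F,x}(C)$.
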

 \begin{proof}
 Since $B^*\in \mathcal{S}_{\theta,F,x}(\mathcal{H})$ and  $\langle AF_n^*x_n,F_n^*x_n \rangle=\langle CF_n^*x_n,B^*F_n^*x_n \rangle, \forall n \in \mathbb{N}$, the convergence follows from Lemma \ref{TRACECLASSEXISTENCELEMMA}.
 \end{proof}
 \begin{definition}\label{TRACECLASSDEFINITION}
 We define the generalized trace class as 
 \begin{align*}
 \mathcal{T}_{\theta,F,x}(\mathcal{H})\coloneqq\{AB: A,B \in \mathcal{S}_{\theta,F,x}(\mathcal{H})\}.
 \end{align*}
 If $A \in \mathcal{T}_{\theta,F,x}(\mathcal{H})$, then we define the trace of $A$ as 
 
 \begin{align*}
 \operatorname{Tr}_{\theta,F,x}(A)\coloneqq\sum_{n=1}^{\infty}\langle AF_n^*x_n, F_n^*x_n\rangle.
 \end{align*}
 \end{definition} 	
Since $\mathcal{S}_{\theta,F,x}(\mathcal{H})$ is closed under multiplication, we naturally have $\mathcal{T}_{\theta,F,x}(\mathcal{H})\subseteq\mathcal{S}_{\theta,F,x}(\mathcal{H})$.

\begin{lemma}\label{CHARACTERIZATIONST}
For $ A \in\mathcal{B}(\mathcal{H})$,  we have \text{\upshape(i)} $\Rightarrow$  \text{\upshape(ii)} $\Rightarrow$ \text{\upshape(iii)} $\Rightarrow$ \text{\upshape(iv)}, where
 \begin{enumerate}[\upshape(i)]
\item $[A]^{1/2} \in \mathcal{S}_{\theta,F,x}(\mathcal{H})$.
\item $A \in \mathcal{T}_{\theta,F,x}(\mathcal{H})$.
\item $[A] \in \mathcal{T}_{\theta,F,x}(\mathcal{H})$.
\item $\sum_{n=1}^{\infty}\langle [A]F_n^*x_n,F_n^*x_n \rangle $ converges. 
\end{enumerate}
If $\theta(F_mV^*[A]^{1/2}U^*F_n^*x_n)= F_nU[A]^{1/2}VF_m^*x_m$,  $\forall n, m \in \mathbb{N}, \forall U,V \in \mathcal{B}(\mathcal{H})$ -----$(\star)$, then \text{\upshape(iv)} $\Rightarrow$ \text{\upshape(i)}.
 \end{lemma}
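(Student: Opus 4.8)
The plan is to run all three forward implications through the polar decomposition $A = W[A]$ of Theorem \ref{POLARDECOMPOSITIONSCHATTEN}, together with the fact, established in Theorem \ref{IDEAL}, that $\mathcal{S}_{\theta,F,x}(\mathcal{H})$ is a two-sided ideal in $\mathcal{B}(\mathcal{H})$, and then to obtain the reverse implication \text{\upshape(iv)} $\Rightarrow$ \text{\upshape(i)} by a direct computation exploiting that $[A]^{1/2}$ is self-adjoint.

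For \text{\upshape(i)} $\Rightarrow$ \text{\upshape(ii)}, I would write $A = W[A] = (W[A]^{1/2})\,[A]^{1/2}$; since $[A]^{1/2} \in \mathcal{S}_{\theta,F,x}(\mathcal{H})$ by hypothesis and the ideal property (Theorem \ref{IDEAL}(v)) gives $W[A]^{1/2} \in \mathcal{S}_{\theta,F,x}(\mathcal{H})$, this exhibits $A$ as a product of two elements of $\mathcal{S}_{\theta,F,x}(\mathcal{H})$, so $A \in \mathcal{T}_{\theta,F,x}(\mathcal{H})$. For \text{\upshape(ii)} $\Rightarrow$ \text{\upshape(iii)}, writing $A = BC$ with $B,C \in \mathcal{S}_{\theta,F,x}(\mathcal{H})$ and using $[A] = W^*A$ from Theorem \ref{POLARDECOMPOSITIONSCHATTEN}(ii), I would factor $[A] = (W^*B)\,C$; again $W^*B \in \mathcal{S}_{\theta,F,x}(\mathcal{H})$ by the ideal property, so $[A] \in \mathcal{T}_{\theta,F,x}(\mathcal{H})$. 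The implication \text{\upshape(iii)} $\Rightarrow$ \text{\upshape(iv)} is then immediate from Lemma \ref{GTRACECLASSEXISTENCELEMMA} applied to $[A]$: since $[A]$ is a product of two Hilbert--Schmidt elements, $\sum_{n=1}^{\infty}|\langle [A]F_n^*x_n, F_n^*x_n\rangle|$ converges, and hence so does the series in \text{\upshape(iv)}.

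For the reverse implication \text{\upshape(iv)} $\Rightarrow$ \text{\upshape(i)}, the starting observation is that $[A]^{1/2}$ is positive, hence self-adjoint, so that
\[
\|[A]^{1/2}F_n^*x_n\|^2 = \langle [A]^{1/2}F_n^*x_n, [A]^{1/2}F_n^*x_n\rangle = \langle [A]F_n^*x_n, F_n^*x_n\rangle
\]
for every $n$. Summing over $n$ shows that the convergence hypothesis in \text{\upshape(iv)} is exactly the summability condition $\sum_{n=1}^{\infty}\|[A]^{1/2}F_n^*x_n\|^2 < \infty$ required by Definition \ref{SCHMIDTGENERALIZED}. The two remaining $\theta$-identities in that definition reduce, because $([A]^{1/2})^* = [A]^{1/2}$, to the single condition $(\star)$, which is precisely why $(\star)$ is imposed as an extra hypothesis here.

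The main obstacle, and the reason the result is stated as a chain of implications together with one conditional converse rather than as a full equivalence, is the pair of $\theta$-compatibility conditions built into membership in $\mathcal{S}_{\theta,F,x}(\mathcal{H})$. In the three forward implications these conditions never need to be verified by hand: they are carried along automatically by the ideal structure of $\mathcal{S}_{\theta,F,x}(\mathcal{H})$ established in Theorem \ref{IDEAL}. For \text{\upshape(iv)} $\Rightarrow$ \text{\upshape(i)}, however, there is no such structural shortcut, and the content of the lemma is exactly the recognition that the single assumption $(\star)$ on $[A]^{1/2}$ suffices to supply both missing identities.
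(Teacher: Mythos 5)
Your proof is correct and follows essentially the same route as the paper: polar decomposition $A=W[A]$ with the ideal property of $\mathcal{S}_{\theta,F,x}(\mathcal{H})$ for the three forward implications, Lemma \ref{GTRACECLASSEXISTENCELEMMA} for \textup{(iii)} $\Rightarrow$ \textup{(iv)}, and the identity $\|[A]^{1/2}F_n^*x_n\|^2=\langle [A]F_n^*x_n,F_n^*x_n\rangle$ together with $(\star)$ for the converse. Your explicit observation that self-adjointness of $[A]^{1/2}$ collapses the two $\theta$-conditions of Definition \ref{SCHMIDTGENERALIZED} into the single hypothesis $(\star)$ is left implicit in the paper, and your factorization $A=(W[A]^{1/2})[A]^{1/2}$ is the correct form of what the paper misprints as $(W^*[A]^{1/2})[A]^{1/2}$.
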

 \begin{proof}
 Let $A=W[A]$ be the polar decomposition of $A$ as given in Theorem \ref{POLARDECOMPOSITIONSCHATTEN}. Then $[A]=W^*A$.
 \begin{enumerate}[\upshape(i)]
 \item $\Rightarrow$ (ii) Since $\mathcal{S}_{\theta,F,x}(\mathcal{H})$ is an ideal in $\mathcal{B}(\mathcal{H})$ and $[A]^{1/2} \in \mathcal{S}_{\theta,F,x}(\mathcal{H})$ we have  $W^*[A]^{1/2} \in \mathcal{S}_{\theta,F,x}(\mathcal{H})$. But then $A=(W^*[A]^{1/2})[A]^{1/2} \in \mathcal{T}_{\theta,F,x}(\mathcal{H})$, from the Definition \ref{TRACECLASSDEFINITION}.
 \item $\Rightarrow$ (iii) From Definition \ref{TRACECLASSDEFINITION} $A=BC$ for some $B,C \in \mathcal{S}_{\theta,F,x}(\mathcal{H})$. Then $W^*B \in \mathcal{S}_{\theta,F,x}(\mathcal{H})$ which gives $[A]=W^*A=(W^*B)C \in \mathcal{T}_{\theta,F,x}(\mathcal{H})$.
 \item $\Rightarrow$ (iv) Let $A=BC$ for some $B,C \in \mathcal{S}_{\theta,F,x}(\mathcal{H})$. From Lemma \ref{GTRACECLASSEXISTENCELEMMA}, $\sum_{n=1}^{\infty}\langle [A]F_n^*x_n,F_n^*x_n \rangle $ converges.
 \item and $(\star)$ $\Rightarrow$ (i)  $\sum_{n=1}^{\infty}\|[A]^{1/2}F_n^*x_n\|^2=\sum_{n=1}^{\infty}\langle [A]F_n^*x_n,F_n^*x_n \rangle<\infty $.
 \end{enumerate}
 \end{proof}
\begin{theorem}\label{TRACEIPCONNECTION}
Let  $A, B  \in \mathcal{T}_{\theta,F,x}(\mathcal{H})$, $\alpha \in \mathbb{K}$ and $T \in \mathcal{B}(\mathcal{H})$. Then 
\begin{enumerate}[\upshape(i)]
\item  $A^*  \in \mathcal{T}_{\theta,F,x}(\mathcal{H})$ and $ \operatorname{Tr}_{\theta,F,x}(A^*)= \overline{\operatorname{Tr}_{\theta,F,x}(A)}$.
\item $\alpha A \in \mathcal{T}_{\theta,F,x}(\mathcal{H})$ and $ \operatorname{Tr}_{\theta,F,x}(\alpha A)=\alpha \operatorname{Tr}_{\theta,F,x}(A)$.
\item  $TA  \in \mathcal{T}_{\theta,F,x}(\mathcal{H})$, $AT \in \mathcal{T}_{\theta,F,x}(\mathcal{H})$.
\item  $\operatorname{Tr}_{\theta,F,x}(A^*A)=\sigma_{\theta, F,x}(A)^2=\operatorname{Tr}_{\theta,F,x}(AA^*)$.
\item  If $\theta(F_mV^*[A+B]^{1/2}U^*F_n^*x_n)= F_nU[A+B]^{1/2}VF_m^*x_m$, $\forall n, m \in \mathbb{N}, \forall U,V \in \mathcal{B}(\mathcal{H})$, then $ A +B \in \mathcal{T}_{\theta,F,x}(\mathcal{H})$ and $ \operatorname{Tr}_{\theta,F,x}(A+B)= \operatorname{Tr}_{\theta,F,x}(A)+\operatorname{Tr}_{\theta,F,x}(B)$.
\item $\operatorname{Tr}_{\theta,F,x}(B^*A)=\langle A, B\rangle.$
\item $|\operatorname{Tr}_{\theta,F,x}(B^*A)|\leq (\operatorname{Tr}_{\theta,F,x}(A^*A))^{1/2}(\operatorname{Tr}_{\theta,F,x}(B^*B))^{1/2}.$
\item $|\operatorname{Tr}_{\theta,F,x}(A^2)|\leq\operatorname{Tr}_{\theta,F,x}(A^*A).$
\item If $0\leq A\leq B$, then $\operatorname{Tr}_{\theta,F,x}(A)\leq \operatorname{Tr}_{\theta,F,x}(B)$.
\item If $F_n^*x_n$ is an eigenvector for $A$ with eigenvalue $\lambda_n$ for each $n\in \mathbb{N}$, then $\operatorname{Tr}_{\theta,F,x}(A)=\sum_{n=1}^{\infty}\lambda_n\|x_n\|^2.$
\item If $A\geq0$, $F_n^*x_n$ is an eigenvector for $A$ with eigenvalue $\lambda_n$ for each $n\in \mathbb{N}$ and $\|x_n\|\geq1,\forall n \in \mathbb{N}$, then $\sigma_{\theta,F,x}(A)\leq\operatorname{Tr}_{\theta,F,x}(A).$
\end{enumerate}
\end{theorem}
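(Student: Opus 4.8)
The plan is to reduce every assertion to three already-established tools: the fact (Theorem \ref{IDEAL}) that $\mathcal{S}_{\theta,F,x}(\mathcal{H})$ is a $*$-closed two-sided ideal satisfying $\sigma_{\theta,F,x}(A^*)=\sigma_{\theta,F,x}(A)$, the inner-product formalism of Proposition \ref{SHIFTPROPOSITION}, and the characterization in Lemma \ref{CHARACTERIZATIONST}; throughout I use $\mathcal{T}_{\theta,F,x}(\mathcal{H})\subseteq\mathcal{S}_{\theta,F,x}(\mathcal{H})$. The membership claims in (i), (ii), (iii) are immediate: writing $A=BC$ with $B,C\in\mathcal{S}_{\theta,F,x}(\mathcal{H})$ gives $A^*=C^*B^*$, $\alpha A=(\alpha B)C$, $TA=(TB)C$ and $AT=B(CT)$, and each new factor stays in $\mathcal{S}_{\theta,F,x}(\mathcal{H})$ by the ideal property. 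The trace identities in (i) and (ii) then follow by moving the adjoint or scalar through the inner product $\langle AF_n^*x_n,F_n^*x_n\rangle$.

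For (iv) I would compute $\operatorname{Tr}_{\theta,F,x}(A^*A)=\sum_{n}\langle A^*AF_n^*x_n,F_n^*x_n\rangle=\sum_{n}\|AF_n^*x_n\|^2=\sigma_{\theta,F,x}(A)^2$, and likewise $\operatorname{Tr}_{\theta,F,x}(AA^*)=\sigma_{\theta,F,x}(A^*)^2=\sigma_{\theta,F,x}(A)^2$ by Theorem \ref{IDEAL}(iv). Part (vi) is the one-line identity $\operatorname{Tr}_{\theta,F,x}(B^*A)=\sum_{n}\langle AF_n^*x_n,BF_n^*x_n\rangle=\langle A,B\rangle$; then (vii) is Proposition \ref{SHIFTPROPOSITION}(vii) combined with (iv), and (viii) is (vii) specialized to $B=A^*$, again invoking $\operatorname{Tr}_{\theta,F,x}(AA^*)=\operatorname{Tr}_{\theta,F,x}(A^*A)$. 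For (ix) I use the termwise inequality $\langle AF_n^*x_n,F_n^*x_n\rangle\leq\langle BF_n^*x_n,F_n^*x_n\rangle$ coming from $A\leq B$, and (x) follows from $AF_n^*x_n=\lambda_n F_n^*x_n$ together with $\|F_n^*x_n\|=\|x_n\|$.

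The main obstacle is (v), the additive closure, since $\mathcal{T}_{\theta,F,x}(\mathcal{H})$ is defined as a set of products and is not visibly a subspace. The strategy is to verify hypothesis (iv) of Lemma \ref{CHARACTERIZATIONST} for $A+B$, namely convergence of $\sum_{n}\langle [A+B]F_n^*x_n,F_n^*x_n\rangle$; the extra assumption $(\star)$ built into (v) then upgrades this to $[A+B]^{1/2}\in\mathcal{S}_{\theta,F,x}(\mathcal{H})$, forcing $A+B\in\mathcal{T}_{\theta,F,x}(\mathcal{H})$. To get the convergence I would take the polar decomposition $A+B=W[A+B]$, so that $[A+B]=W^*(A+B)=W^*A+W^*B$; by (iii) both $W^*A$ and $W^*B$ lie in $\mathcal{T}_{\theta,F,x}(\mathcal{H})$, whence Lemma \ref{GTRACECLASSEXISTENCELEMMA} makes $\sum_{n}|\langle (W^*A)F_n^*x_n,F_n^*x_n\rangle|$ and $\sum_{n}|\langle (W^*B)F_n^*x_n,F_n^*x_n\rangle|$ converge, and the triangle inequality finishes the estimate. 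The trace additivity is then the termwise split $\langle (A+B)F_n^*x_n,F_n^*x_n\rangle=\langle AF_n^*x_n,F_n^*x_n\rangle+\langle BF_n^*x_n,F_n^*x_n\rangle$, legitimate because each series converges absolutely.

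Finally, (xi) needs a short elementary estimate. From (x), $\operatorname{Tr}_{\theta,F,x}(A)=\sum_{n}\lambda_n\|x_n\|^2$ with $\lambda_n\geq0$ (since $A\geq0$), while Theorem \ref{IDEAL}(xi) gives $\sigma_{\theta,F,x}(A)^2=\sum_{n}\lambda_n^2\|x_n\|^2$. Setting $a_n:=\lambda_n\|x_n\|^2\geq0$ and using $\|x_n\|\geq1$ (hence $\|x_n\|^2\leq\|x_n\|^4$), one has $\lambda_n^2\|x_n\|^2\leq a_n^2$, so that $\sigma_{\theta,F,x}(A)^2\leq\sum_{n}a_n^2\leq\big(\sum_{n}a_n\big)^2=\operatorname{Tr}_{\theta,F,x}(A)^2$, which yields $\sigma_{\theta,F,x}(A)\leq\operatorname{Tr}_{\theta,F,x}(A)$.
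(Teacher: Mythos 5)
Your proposal is correct and follows essentially the same route as the paper: the ideal property of $\mathcal{S}_{\theta,F,x}(\mathcal{H})$ for (i)--(iii), direct termwise computations for (iv), (vi), (ix), (x), the Cauchy--Schwarz inequality of Proposition \ref{SHIFTPROPOSITION} for (vii)--(viii), and for (v) exactly the paper's argument via the polar decomposition $[A+B]=W^*(A+B)$, part (iii), Lemma \ref{GTRACECLASSEXISTENCELEMMA}, and the $(\star)$-hypothesis feeding Lemma \ref{CHARACTERIZATIONST}. Your proof of (xi) via $\sum_n a_n^2\leq\left(\sum_n a_n\right)^2$ with $a_n=\lambda_n\|x_n\|^2$ is only a cosmetic rearrangement of the paper's estimate $\left(\sum_n\|\lambda_n x_n\|^2\right)^{1/2}\leq\sum_n\|\lambda_n x_n\|\leq\sum_n|\lambda_n|\|x_n\|^2$, so the two proofs coincide in substance.
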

\begin{proof}
Let $A=CD$ for some $C,D \in \mathcal{S}_{\theta,F,x}(\mathcal{H})$.	
\begin{enumerate}[\upshape(i)]
\item  Since $ \mathcal{S}_{\theta,F,x}(\mathcal{H})$ is closed under $*$ and $A^*=D^*C^*$ we see that $A^*  \in \mathcal{T}_{\theta,F,x}(\mathcal{H})$ and $ \operatorname{Tr}_{\theta,F,x}(A^*)=\sum_{n=1}^{\infty}\langle A^*F_n^*x_n, F_n^*x_n\rangle =\overline{\sum_{n=1}^{\infty}\langle AF_n^*x_n, F_n^*x_n\rangle} =\overline{\operatorname{Tr}_{\theta,F,x}(A)}$.
\item  Since  $ \alpha C\in \mathcal{S}_{\theta,F,x}(\mathcal{H})$, we have $\alpha A \in \mathcal{T}_{\theta,F,x}(\mathcal{H})$ and $ \operatorname{Tr}_{\theta,F,x}(\alpha A)=\sum_{n=1}^{\infty}\langle \alpha AF_n^*x_n, F_n^*x_n\rangle=\alpha\sum_{n=1}^{\infty}\langle AF_n^*x_n, F_n^*x_n\rangle=\alpha \operatorname{Tr}_{\theta,F,x}(A)$.
\item $TA=(TC)D \in \mathcal{T}_{\theta,F,x}(\mathcal{H})$, $AT=C(DT) \in \mathcal{T}_{\theta,F,x}(\mathcal{H})$. 
\item  $\operatorname{Tr}_{\theta,F,x}(A^*A)=\sum_{n=1}^{\infty}\langle A^*AF_n^*x_n, F_n^*x_n\rangle=\sum_{n=1}^{\infty}\| AF_n^*x_n\|^2=\sigma_{\theta, F,x}(A)^2=\sigma_{\theta, F,x}(A^*)^2=\operatorname{Tr}_{\theta,F,x}(AA^*)$.
\item  Let $A+B=W^*[A+B]$ be the polar decomposition of $A+B$, given by Theorem \ref{POLARDECOMPOSITIONSCHATTEN}. From (iii) we see that $W^*A,W^*B\in \mathcal{T}_{\theta,F,x}(\mathcal{H})$. Then from Lemma \ref{GTRACECLASSEXISTENCELEMMA}, both series $\sum_{n=1}^{\infty}\langle W^*AF_n^*x_n, F_n^*x_n\rangle$, $\sum_{n=1}^{\infty}\langle W^*BF_n^*x_n, F_n^*x_n\rangle$ converge which implies $\sum_{n=1}^{\infty}\langle [A+B]F_n^*x_n, F_n^*x_n\rangle=\sum_{n=1}^{\infty}\langle (W^*A+W^*B)F_n^*x_n, F_n^*x_n\rangle<\infty$. Lemma \ref{CHARACTERIZATIONST} now tells that $ A +B \in \mathcal{T}_{\theta,F,x}(\mathcal{H})$. We now easily see $ \operatorname{Tr}_{\theta,F,x}(A+B)=\sum_{n=1}^{\infty}\langle (A+B)F_n^*x_n, F_n^*x_n\rangle= \operatorname{Tr}_{\theta,F,x}(A)+\operatorname{Tr}_{\theta,F,x}(B)$. 
\item $\operatorname{Tr}_{\theta,F,x}(B^*A)=\sum_{n=1}^{\infty}\langle B^*AF_n^*x_n, F_n^*x_n\rangle=\sum_{n=1}^{\infty}\langle AF_n^*x_n, BF_n^*x_n\rangle=\langle A, B\rangle$.
\item $|\operatorname{Tr}_{\theta,F,x}(B^*A)|^2=|\langle A, B\rangle|^2\leq \sigma_{\theta, F,x}(A)^2\sigma_{\theta, F,x}(B)^2= \operatorname{Tr}_{\theta,F,x}(A^*A)\operatorname{Tr}_{\theta,F,x}(B^*B).$
\item $|\operatorname{Tr}_{\theta,F,x}(A^2)|=|\langle A,A^* \rangle|\leq \sigma_{\theta, F,x}(A)\sigma_{\theta, F,x}(A^*)=\sigma_{\theta, F,x}(A)^2=\operatorname{Tr}_{\theta,F,x}(A^*A).$
\item $\operatorname{Tr}_{\theta,F,x}(A)=\sum_{n=1}^{\infty}\langle AF_n^*x_n, F_n^*x_n\rangle\leq \sum_{n=1}^{\infty}\langle BF_n^*x_n, F_n^*x_n\rangle= \operatorname{Tr}_{\theta,F,x}(B)$.
\item $\operatorname{Tr}_{\theta,F,x}(A)=\sum_{n=1}^{\infty} \lambda_n \|F_n^*x_n\|^2 =\sum_{n=1}^{\infty}\lambda_n\|x_n\|^2.$
\item $\sigma_{\theta,F,x}(A)=(\sum_{n=1}^{\infty}\|\lambda_nF_n^*x_n\|^2)^{1/2}=(\sum_{n=1}^{\infty}\|\lambda_nx_n\|^2)^{1/2}\leq\sum_{n=1}^{\infty}\|\lambda_nx_n\|\leq \sum_{n=1}^{\infty}|\lambda_n|\|x_n\|^2=\sum_{n=1}^{\infty}\lambda_n\|x_n\|^2 =\operatorname{Tr}_{\theta,F,x}(A).$
\end{enumerate} 	
\end{proof}
\begin{corollary}\label{TRACIALCOROLLARY}
 If $A  \in \mathcal{T}_{\theta,F,x}(\mathcal{H})$ and $T \in \mathcal{B}(\mathcal{H})$, then $ \operatorname{Tr}_{\theta,F,x}(TA)= \operatorname{Tr}_{\theta,F,x}(AT)$.	
 
\end{corollary}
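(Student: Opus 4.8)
The plan is to exploit the factorization built into the definition of the generalized trace class together with the trace--inner-product identity of Theorem \ref{TRACEIPCONNECTION}(vi) and the adjoint-shifting identities of Proposition \ref{SHIFTPROPOSITION}(vi). Since $A \in \mathcal{T}_{\theta,F,x}(\mathcal{H})$, I would first invoke Definition \ref{TRACECLASSDEFINITION} to write $A = CD$ with $C, D \in \mathcal{S}_{\theta,F,x}(\mathcal{H})$. Both $TA$ and $AT$ then lie in $\mathcal{T}_{\theta,F,x}(\mathcal{H})$ by Theorem \ref{TRACEIPCONNECTION}(iii), so the two traces in question are defined to begin with, and it remains only to identify them.

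Next I would rearrange each product so as to isolate a single pair of Hilbert--Schmidt factors. Writing $TA = (TC)D$ and $AT = C(DT)$, the ideal properties in Theorem \ref{IDEAL}, parts (iv)--(vi), guarantee that $TC$, $DT$, $C^*$ and $C^*T^*$ all remain in $\mathcal{S}_{\theta,F,x}(\mathcal{H})$. Applying the identity $\operatorname{Tr}_{\theta,F,x}(B^*A') = \langle A', B\rangle$ (the computation underlying Theorem \ref{TRACEIPCONNECTION}(vi), which uses only $\mathcal{S}_{\theta,F,x}(\mathcal{H})$-membership of the factors) to the factorizations $TA = (C^*T^*)^* D$ and $AT = (C^*)^*(DT)$ yields
\begin{align*}
\operatorname{Tr}_{\theta,F,x}(TA) = \langle D, C^*T^*\rangle \qquad\text{and}\qquad \operatorname{Tr}_{\theta,F,x}(AT) = \langle DT, C^*\rangle .
\end{align*}
Alternatively, one may avoid appealing to (vi) altogether and unfold both traces directly from Definition \ref{TRACECLASSDEFINITION}, using $\langle TCD\,F_n^*x_n, F_n^*x_n\rangle = \langle D F_n^*x_n, C^*T^* F_n^*x_n\rangle$ and summing, which reproduces the same two inner products.

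Finally I would close the argument with the second adjoint-shift identity of Proposition \ref{SHIFTPROPOSITION}(vi), namely $\langle DT, C^*\rangle = \langle D, C^*T^*\rangle$ (taking $A = D$, $B = C^*$ there), which shows the two right-hand sides coincide and hence $\operatorname{Tr}_{\theta,F,x}(TA) = \operatorname{Tr}_{\theta,F,x}(AT)$. The only point demanding care is the bookkeeping that keeps every intermediate operator inside $\mathcal{S}_{\theta,F,x}(\mathcal{H})$---in particular that $C^*T^*$ is Hilbert--Schmidt (apply (iv) then (vi) of Theorem \ref{IDEAL})---so that each inner product converges by Lemma \ref{TRACECLASSEXISTENCELEMMA} and the trace--inner-product identity is legitimately applicable. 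Once this membership is in place, the algebraic heart of the statement is precisely the single equality of inner products displayed above.
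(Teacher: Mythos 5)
Your proposal is correct and is essentially the paper's own argument: factor $A=CD$, compute $\operatorname{Tr}_{\theta,F,x}(TA)=\langle D, C^*T^*\rangle$ and $\operatorname{Tr}_{\theta,F,x}(AT)=\langle DT, C^*\rangle$ via $\operatorname{Tr}_{\theta,F,x}(B^*A')=\langle A',B\rangle$, and conclude with the adjoint-shift identity $\langle DT, C^*\rangle=\langle D, C^*T^*\rangle$ of Proposition \ref{SHIFTPROPOSITION}(vi). Your extra care that the trace--inner-product identity only needs $\mathcal{S}_{\theta,F,x}(\mathcal{H})$-membership of the factors is a worthwhile clarification (the paper uses it implicitly, and its citations to ``(v)'' of Theorem \ref{TRACEIPCONNECTION} and Proposition \ref{SHIFTPROPOSITION} appear to be off by one, meaning (vi) in each case), but it does not change the route.
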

\begin{proof}
We shall write $A=CD$ for some $C,D \in \mathcal{S}_{\theta,F,x}(\mathcal{H})$. Using  (v) in Theorem \ref{TRACEIPCONNECTION} and (v) in Proposition \ref{SHIFTPROPOSITION}, $ \operatorname{Tr}_{\theta,F,x}(TA)=\operatorname{Tr}_{\theta,F,x}((TC)D)=\langle D, C^*T^*\rangle =\langle DT,C^*\rangle=\operatorname{Tr}_{\theta,F,x}(CDT)= \operatorname{Tr}_{\theta,F,x}(AT)$.	
\end{proof}
\begin{definition}
For  $A  \in \mathcal{T}_{\theta,F,x}(\mathcal{H})$, we define 
\begin{align*}
\tau_{\theta,F,x}(A)\coloneqq \operatorname{Tr}_{\theta,F,x}([A]).
\end{align*}
\end{definition}
Lemma \ref{CHARACTERIZATIONST} says that $A  \in \mathcal{T}_{\theta,F,x}(\mathcal{H})$ implies $[A]  \in \mathcal{T}_{\theta,F,x}(\mathcal{H})$. Therefore $\tau_{\theta,F,x}(\cdot)$ is well-defined.

\begin{proposition}\label{PROPOSITIONTRACE}
 Let $A \in \mathcal{B}(\mathcal{H})$. 
 \begin{enumerate}[\upshape(i)]
 \item If $A  \in \mathcal{T}_{\theta,F,x}(\mathcal{H})$, then  $[A] \in \mathcal{T}_{\theta,F,x}(\mathcal{H})$ and  $\tau_{\theta,F,x}(A)=\tau_{\theta,F,x}([A])$.
 \item If $[A]  \in \mathcal{T}_{\theta,F,x}(\mathcal{H})$ and $\theta(F_mV^*[A]^{1/2}U^*F_n^*x_n)= F_nU[A]^{1/2}VF_m^*x_m$, $ \forall n, m \in \mathbb{N}, \forall U,V \in \mathcal{B}(\mathcal{H})$, then  $A \in \mathcal{T}_{\theta,F,x}(\mathcal{H})$ and  $\tau_{\theta,F,x}([A])=\tau_{\theta,F,x}(A)$.
 \item If $[A]^{1/2} \in \mathcal{S}_{\theta,F,x}(\mathcal{H})$, then $A \in \mathcal{T}_{\theta,F,x}(\mathcal{H})$ and $\sigma_{\theta,F,x}([A]^{1/2})^2=\tau_{\theta,F,x}(A)$.
 \item If $A \in \mathcal{T}_{\theta,F,x}(\mathcal{H})$ and $\theta(F_mV^*[A]^{1/2}U^*F_n^*x_n)= F_nU[A]^{1/2}VF_m^*x_m$, $ \forall n, m \in \mathbb{N}, \forall U,V \in \mathcal{B}(\mathcal{H})$, then $[A]^{1/2} \in \mathcal{S}_{\theta,F,x}(\mathcal{H})$ and $\tau_{\theta,F,x}(A)=\sigma_{\theta,F,x}([A]^{1/2})^2$.
 \end{enumerate}
  \end{proposition}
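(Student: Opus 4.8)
The plan is to read all four parts off Lemma \ref{CHARACTERIZATIONST}, combined with two elementary facts about $[A]=(A^*A)^{1/2}$. First, $[A]$ is positive, so $[[A]]=([A]^*[A])^{1/2}=([A]^2)^{1/2}=[A]$. Second, $[A]^{1/2}$ is positive, hence self-adjoint, with $([A]^{1/2})^2=[A]$; this yields the single identity that underlies the norm/trace equalities, namely
\begin{align*}
\sigma_{\theta,F,x}([A]^{1/2})^2=\sum_{n=1}^{\infty}\|[A]^{1/2}F_n^*x_n\|^2=\sum_{n=1}^{\infty}\langle [A]F_n^*x_n, F_n^*x_n\rangle=\operatorname{Tr}_{\theta,F,x}([A])=\tau_{\theta,F,x}(A),
\end{align*}
valid whenever $[A]^{1/2}\in\mathcal{S}_{\theta,F,x}(\mathcal{H})$, since then the middle sum is a convergent series of nonnegative terms.

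First I would dispose of the two parts that do not invoke $(\star)$. For (i), the hypothesis $A\in\mathcal{T}_{\theta,F,x}(\mathcal{H})$ is statement (ii) of Lemma \ref{CHARACTERIZATIONST}, which gives (iii), i.e. $[A]\in\mathcal{T}_{\theta,F,x}(\mathcal{H})$; the trace identity then reads $\tau_{\theta,F,x}([A])=\operatorname{Tr}_{\theta,F,x}([[A]])=\operatorname{Tr}_{\theta,F,x}([A])=\tau_{\theta,F,x}(A)$ using $[[A]]=[A]$. For (iii), the hypothesis $[A]^{1/2}\in\mathcal{S}_{\theta,F,x}(\mathcal{H})$ is statement (i) of the same lemma, which gives (ii), so $A\in\mathcal{T}_{\theta,F,x}(\mathcal{H})$, and the claimed equality $\sigma_{\theta,F,x}([A]^{1/2})^2=\tau_{\theta,F,x}(A)$ is precisely the displayed identity.

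Next I would handle (ii) and (iv), which are the same two conclusions run with $(\star)$ in force; this extra hypothesis is exactly what powers the reverse implication (iv)$\Rightarrow$(i) in Lemma \ref{CHARACTERIZATIONST}. For (ii), the hypothesis $[A]\in\mathcal{T}_{\theta,F,x}(\mathcal{H})$ is (iii), hence (iv) holds, and $(\star)$ promotes this to (i), giving $[A]^{1/2}\in\mathcal{S}_{\theta,F,x}(\mathcal{H})$; then (i)$\Rightarrow$(ii) yields $A\in\mathcal{T}_{\theta,F,x}(\mathcal{H})$, while the trace equality is again $\tau_{\theta,F,x}([A])=\tau_{\theta,F,x}(A)$ via $[[A]]=[A]$. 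For (iv), $A\in\mathcal{T}_{\theta,F,x}(\mathcal{H})$ is (ii), hence (iii) and (iv) follow, and $(\star)$ gives (i), i.e. $[A]^{1/2}\in\mathcal{S}_{\theta,F,x}(\mathcal{H})$; the equality $\tau_{\theta,F,x}(A)=\sigma_{\theta,F,x}([A]^{1/2})^2$ is once more the displayed identity.

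Since each assertion is a direct invocation of Lemma \ref{CHARACTERIZATIONST} or of one of the two elementary facts, I do not expect a genuine obstacle here. The only points needing care are bookkeeping: confirming that $(\star)$ is required exactly in (ii) and (iv), because those are the parts whose membership claims rely on the reverse implication (iv)$\Rightarrow$(i); and verifying that the positivity of $[A]$ and of $[A]^{1/2}$ genuinely collapses $[[A]]$ to $[A]$ and legitimizes shifting $[A]^{1/2}$ across the inner product. Convergence of every series in play is supplied by membership in $\mathcal{S}_{\theta,F,x}(\mathcal{H})$ or $\mathcal{T}_{\theta,F,x}(\mathcal{H})$ through Lemmas \ref{TRACECLASSEXISTENCELEMMA} and \ref{GTRACECLASSEXISTENCELEMMA}.
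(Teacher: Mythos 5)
Your proposal is correct and takes essentially the same approach as the paper: memberships are read off Lemma \ref{CHARACTERIZATIONST} (with the hypothesis $(\star)$ powering the reverse implication precisely in parts (ii) and (iv)), and the equalities rest on $[[A]]=[A]$ together with $\sigma_{\theta,F,x}([A]^{1/2})^2=\operatorname{Tr}_{\theta,F,x}([A])$, which the paper phrases as $\langle [A]^{1/2},[A]^{1/2}\rangle=\operatorname{Tr}_{\theta,F,x}(([A]^{1/2})^*[A]^{1/2})$ but which is the same computation using the self-adjointness of $[A]^{1/2}$ and $([A]^{1/2})^2=[A]$. The only difference is presentational: the paper compresses the membership bookkeeping into a one-line appeal to Lemma \ref{CHARACTERIZATIONST}, whereas you spell out the implication chains explicitly.
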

\begin{proof}
We have to argue only for ``=", others are proved in Lemma \ref{CHARACTERIZATIONST}.
\begin{enumerate}[\upshape(i)]
\item  $\tau_{\theta,F,x}(A)=\operatorname{Tr}_{\theta,F,x}([A])=\operatorname{Tr}_{\theta,F,x}(([A]^*[A])^{1/2})=\operatorname{Tr}_{\theta,F,x}([[A]])=\tau_{\theta,F,x}([A])$.
\item We write the proof of (i) in reverse way.
\item  Using  (v) in Theorem \ref{TRACEIPCONNECTION}, $\sigma_{\theta,F,x}([A]^{1/2})^2=\langle [A]^{1/2}, [A]^{1/2}\rangle =\operatorname{Tr}_{\theta,F,x}(([A]^{1/2})^*[A]^{1/2})=\tau_{\theta,F,x}(A)$.
\item We write the proof of (iii) in reverse way.
\end{enumerate}	
\end{proof}

\begin{lemma}\label{TAULEMMA}
Let $T \in \mathcal{B}(\mathcal{H})$, $A  \in \mathcal{T}_{\theta,F,x}(\mathcal{H})$ and $\theta(F_mV^*[A]^{1/2}U^*F_n^*x_n)= F_nU[A]^{1/2}VF_m^*x_m,$ $ \forall n, m \in \mathbb{N}, \forall U,V \in \mathcal{B}(\mathcal{H})$. Then $T[A]\in \mathcal{T}_{\theta,F,x}(\mathcal{H})$ and $|\operatorname{Tr}_{\theta,F,x}(T[A])|\leq \|T\|\tau_{\theta,F,x}(A)$.
\end{lemma}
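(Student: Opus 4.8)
The plan is to reduce the whole statement to the square-root operator $[A]^{1/2}$ and to the Hilbert--Schmidt-type inner product on $\mathcal{S}_{\theta,F,x}(\mathcal{H})$ built in Section \ref{HSCLASSSECTION}. First I would invoke Proposition \ref{PROPOSITIONTRACE}(iv): since $A \in \mathcal{T}_{\theta,F,x}(\mathcal{H})$ and the standing hypothesis on $\theta(F_mV^*[A]^{1/2}U^*F_n^*x_n)$ is exactly the condition $(\star)$ used there, we obtain at once that $[A]^{1/2} \in \mathcal{S}_{\theta,F,x}(\mathcal{H})$ together with the numerical identity $\tau_{\theta,F,x}(A) = \sigma_{\theta,F,x}([A]^{1/2})^2$. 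This single step transports the problem from the trace class into the ideal $\mathcal{S}_{\theta,F,x}(\mathcal{H})$, where all the machinery is available.

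For the membership claim $T[A]\in\mathcal{T}_{\theta,F,x}(\mathcal{H})$, I would write $[A]=[A]^{1/2}[A]^{1/2}$, so that $T[A]=(T[A]^{1/2})\,([A]^{1/2})$. Because $\mathcal{S}_{\theta,F,x}(\mathcal{H})$ is a two-sided ideal (Theorem \ref{IDEAL}, in particular part (v)), the left factor $T[A]^{1/2}$ lies in $\mathcal{S}_{\theta,F,x}(\mathcal{H})$, and the right factor lies there by the first step. Thus $T[A]$ is a product of two elements of $\mathcal{S}_{\theta,F,x}(\mathcal{H})$, which is precisely the defining requirement for $\mathcal{T}_{\theta,F,x}(\mathcal{H})$ in Definition \ref{TRACECLASSDEFINITION}; in particular $\operatorname{Tr}_{\theta,F,x}(T[A])$ is now defined.

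For the trace estimate the decisive move is to recognise the trace as an inner product. Using $[A]=[A]^{1/2}[A]^{1/2}$, the self-adjointness $([A]^{1/2})^*=[A]^{1/2}$, and the adjoint identity $(T[A]^{1/2})^*=[A]^{1/2}T^*$, I would rewrite each summand as
\begin{align*}
\langle T[A]F_n^*x_n, F_n^*x_n\rangle = \langle [A]^{1/2}F_n^*x_n,\, [A]^{1/2}T^*F_n^*x_n\rangle,
\end{align*}
so that $\operatorname{Tr}_{\theta,F,x}(T[A]) = \langle [A]^{1/2},\, [A]^{1/2}T^*\rangle$ in the inner product of Equation (\ref{IPWELL}) (alternatively this is Theorem \ref{TRACEIPCONNECTION}(vi) applied to the $\mathcal{S}$-factors). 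Then the Cauchy--Schwarz inequality for this inner product (Proposition \ref{SHIFTPROPOSITION}(vii)) gives $|\operatorname{Tr}_{\theta,F,x}(T[A])| \leq \sigma_{\theta,F,x}([A]^{1/2})\,\sigma_{\theta,F,x}([A]^{1/2}T^*)$, and the ideal norm estimate (Theorem \ref{IDEAL}(vi)) bounds $\sigma_{\theta,F,x}([A]^{1/2}T^*) \leq \|T^*\|\,\sigma_{\theta,F,x}([A]^{1/2}) = \|T\|\,\sigma_{\theta,F,x}([A]^{1/2})$. Combining these with $\tau_{\theta,F,x}(A)=\sigma_{\theta,F,x}([A]^{1/2})^2$ yields $|\operatorname{Tr}_{\theta,F,x}(T[A])| \leq \|T\|\,\tau_{\theta,F,x}(A)$.

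The only genuinely delicate point is the algebraic rewriting of the summand: one must push $T[A]^{1/2}$ across the inner product correctly as an adjoint and exploit the self-adjointness of $[A]^{1/2}$, while simultaneously confirming that both $[A]^{1/2}$ and $[A]^{1/2}T^*$ actually belong to $\mathcal{S}_{\theta,F,x}(\mathcal{H})$ so that the inner-product Cauchy--Schwarz inequality of Proposition \ref{SHIFTPROPOSITION}(vii) is legitimately applicable. Everything after that is a routine assembly of Theorem \ref{IDEAL}, Proposition \ref{SHIFTPROPOSITION}, and Proposition \ref{PROPOSITIONTRACE}.
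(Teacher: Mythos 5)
Your proposal is correct and follows essentially the same route as the paper: both pass through Proposition \ref{PROPOSITIONTRACE}(iv) to get $[A]^{1/2}\in\mathcal{S}_{\theta,F,x}(\mathcal{H})$ with $\tau_{\theta,F,x}(A)=\sigma_{\theta,F,x}([A]^{1/2})^2$, rewrite $\operatorname{Tr}_{\theta,F,x}(T[A])$ as the inner product $\langle [A]^{1/2},[A]^{1/2}T^*\rangle$, and finish with Cauchy--Schwarz plus the ideal estimate $\sigma_{\theta,F,x}([A]^{1/2}T^*)\leq\|T\|\sigma_{\theta,F,x}([A]^{1/2})$. The only cosmetic difference is that you establish $T[A]\in\mathcal{T}_{\theta,F,x}(\mathcal{H})$ by factoring $T[A]=(T[A]^{1/2})[A]^{1/2}$ directly, whereas the paper cites Theorem \ref{TRACEIPCONNECTION} for that membership, and the underlying argument there is the same factorization.
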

\begin{proof}
Proposition \ref{PROPOSITIONTRACE} shows that $[A] \in \mathcal{T}_{\theta,F,x}(\mathcal{H})$ and (iv) in  Theorem \ref{TRACEIPCONNECTION} tells that $T[A]\in \mathcal{T}_{\theta,F,x}(\mathcal{H})$. Next,  using (iv) in Proposition \ref{PROPOSITIONTRACE}, $|\operatorname{Tr}_{\theta,F,x}(T[A])|=|\operatorname{Tr}_{\theta,F,x}(([A]^{1/2}T^*)^*[A]^{1/2})|=|\langle [A]^{1/2},[A]^{1/2}T^* \rangle |\leq \sigma_{\theta, F,x}([A]^{1/2})\sigma_{\theta, F,x}([A]^{1/2}T^*)\leq\sigma_{\theta, F,x}([A]^{1/2})^2\|T\| = \|T\|\tau_{\theta,F,x}(A)$.
\end{proof}
 \begin{theorem}\label{TRACETAUINEQUALITY}
 $A, B  \in \mathcal{T}_{\theta,F,x}(\mathcal{H})$, $T \in \mathcal{B}(\mathcal{H})$	and $\alpha \in \mathbb{K}$.
\begin{enumerate}[\upshape(i)]
\item $\tau_{\theta,F,x}(A^*)=\tau_{\theta,F,x}(A)$.
\item $\tau_{\theta,F,x}(\alpha A)=|\alpha|\tau_{\theta,F,x}(A)$.
\item If $A+B  \in \mathcal{T}_{\theta,F,x}(\mathcal{H})$, then $\tau_{\theta,F,x}(A+B)\leq\tau_{\theta,F,x}(A)+\tau_{\theta,F,x}(B)$.
\item $\tau_{\theta,F,x}(A)\geq0$.
\item Suppose $\tau_{\theta,F,x}(A)=0$. If there exist $a>0$ and $2\leq p <\infty$  such that
\begin{align*}
a\|h\|&\leq \left(\sum_{n=1}^{\infty}|\langle h, F_n^*x_n\rangle |^p\right)^\frac{1}{p},~ \forall h \in \mathcal{H}, 
\end{align*}		
then $A=0$. In particular, if $\{F_n^*x_n\}_n$ is a frame for $\mathcal{H}$, then  $A=0$.
\item $ \tau_{\theta,F,x}(TA)\leq \|T\|\tau_{\theta,F,x}(A)$, $ \tau_{\theta,F,x}(AT)\leq \|T\|\tau_{\theta,F,x}(A)$.
\item $|\operatorname{Tr}_{\theta,F,x}(A)|\leq\tau_{\theta,F,x}(A)$.
\item $\sigma_{\theta, F,x}(A)^2\leq\tau_{\theta,F,x}(A^*A).$
\item If $[A]\leq [B]$, then $\tau_{\theta,F,x}(A)\leq\tau_{\theta,F,x}(B)$.
\item If $\|A\|<1$, then $\tau_{\theta,F,x}(A^n)\rightarrow 0$ as $n\rightarrow \infty$.
\end{enumerate}		
 \end{theorem}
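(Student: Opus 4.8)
The plan is to reduce every item to the single definition $\tau_{\theta,F,x}(\cdot)=\operatorname{Tr}_{\theta,F,x}([\cdot])$ together with three already-available tools: the polar decomposition (Theorem \ref{POLARDECOMPOSITIONSCHATTEN}), the traciality $\operatorname{Tr}_{\theta,F,x}(TA)=\operatorname{Tr}_{\theta,F,x}(AT)$ of Corollary \ref{TRACIALCOROLLARY}, and the fundamental estimate $|\operatorname{Tr}_{\theta,F,x}(T[A])|\le\|T\|\tau_{\theta,F,x}(A)$ of Lemma \ref{TAULEMMA}. Several items are then immediate. For (ii) I would use $[\alpha A]=|\alpha|[A]$ and linearity of the trace. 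For (iv), positivity of $[A]$ makes each summand $\langle[A]F_n^*x_n,F_n^*x_n\rangle\ge 0$. For (ix), $[A]\le[B]$ gives the termwise inequality $\langle[A]F_n^*x_n,F_n^*x_n\rangle\le\langle[B]F_n^*x_n,F_n^*x_n\rangle$, which is exactly (ix) of Theorem \ref{TRACEIPCONNECTION} applied to the positive operators $[A],[B]$ (both in $\mathcal{T}_{\theta,F,x}(\mathcal{H})$ by Lemma \ref{CHARACTERIZATIONST}). For (viii) I would note $[A^*A]=A^*A$ since $A^*A\ge 0$, so $\tau_{\theta,F,x}(A^*A)=\operatorname{Tr}_{\theta,F,x}(A^*A)=\sigma_{\theta,F,x}(A)^2$ by (iv) of Theorem \ref{TRACEIPCONNECTION}; the asserted inequality is in fact an equality.

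For the items hinging on the polar decomposition, write $A=W[A]$. For (i) I would invoke $[A^*]=W[A]W^*$ (Theorem \ref{POLARDECOMPOSITIONSCHATTEN}(iv)) and use traciality to write $\tau_{\theta,F,x}(A^*)=\operatorname{Tr}_{\theta,F,x}(W([A]W^*))=\operatorname{Tr}_{\theta,F,x}(([A]W^*)W)=\operatorname{Tr}_{\theta,F,x}([A]W^*W)$; since $W^*W$ is the orthogonal projection onto $\overline{[A]\mathcal{H}}$ and $[A]$ is self-adjoint, $[A]W^*W=[A]$, giving $\tau_{\theta,F,x}(A^*)=\tau_{\theta,F,x}(A)$. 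For (vii), $\operatorname{Tr}_{\theta,F,x}(A)=\operatorname{Tr}_{\theta,F,x}(W[A])$ and Lemma \ref{TAULEMMA} with $T=W$ yields $|\operatorname{Tr}_{\theta,F,x}(A)|\le\|W\|\tau_{\theta,F,x}(A)\le\tau_{\theta,F,x}(A)$. For (vi), writing $TA=W_1[TA]$ and $A=W[A]$ gives $[TA]=W_1^*TW[A]$, so $\tau_{\theta,F,x}(TA)=\operatorname{Tr}_{\theta,F,x}((W_1^*TW)[A])\le\|W_1^*TW\|\tau_{\theta,F,x}(A)\le\|T\|\tau_{\theta,F,x}(A)$ (using that $\tau_{\theta,F,x}(TA)$ is real and nonnegative); the bound for $AT$ follows the same way after a traciality rewrite of $\operatorname{Tr}_{\theta,F,x}([AT])$, or from (i) applied to $(AT)^*=T^*A^*$. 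Finally (x) follows by iterating (vi): $0\le\tau_{\theta,F,x}(A^n)=\tau_{\theta,F,x}(A^{n-1}A)\le\|A\|^{n-1}\tau_{\theta,F,x}(A)\to 0$ when $\|A\|<1$.

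The main obstacle is the triangle inequality (iii), exactly as in the classical Schatten theory. Here I would take the polar decomposition $A+B=W[A+B]$, so that $[A+B]=W^*(A+B)=W^*A+W^*B$ and hence $\tau_{\theta,F,x}(A+B)=\operatorname{Tr}_{\theta,F,x}(W^*A)+\operatorname{Tr}_{\theta,F,x}(W^*B)$. The point is then to bound each term: writing $A=W_A[A]$ gives $W^*A=(W^*W_A)[A]$, so Lemma \ref{TAULEMMA} yields $|\operatorname{Tr}_{\theta,F,x}(W^*A)|\le\|W^*W_A\|\tau_{\theta,F,x}(A)\le\tau_{\theta,F,x}(A)$, and similarly for $B$. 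Since $\tau_{\theta,F,x}(A+B)$ is real (it is the trace of the positive operator $[A+B]$), it equals the sum of the real parts of the two traces, which is at most $|\operatorname{Tr}_{\theta,F,x}(W^*A)|+|\operatorname{Tr}_{\theta,F,x}(W^*B)|\le\tau_{\theta,F,x}(A)+\tau_{\theta,F,x}(B)$. The delicate point common to (i), (iii), (vi), (vii), (x) is that Lemma \ref{TAULEMMA} carries the hypothesis $\theta(F_mV^*[A]^{1/2}U^*F_n^*x_n)=F_nU[A]^{1/2}VF_m^*x_m$ for all $n,m,U,V$; I would carry this $\theta$-compatibility as a standing assumption on the operators involved, since it is precisely what guarantees $[A]^{1/2}\in\mathcal{S}_{\theta,F,x}(\mathcal{H})$ and thereby makes the trace estimates available.

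For (v), I would compute $\tau_{\theta,F,x}(A)=\sum_{n=1}^{\infty}\langle[A]F_n^*x_n,F_n^*x_n\rangle=\sum_{n=1}^{\infty}\|[A]^{1/2}F_n^*x_n\|^2=\sigma_{\theta,F,x}([A]^{1/2})^2$; if this vanishes then $\sigma_{\theta,F,x}([A]^{1/2})=0$, and the frame-type lower bound $a\|\cdot\|\le\sigma_{\theta,F,x}(\cdot)$ established in the proof of Theorem \ref{IDEAL}(vii) forces $[A]^{1/2}=0$, whence $[A]=0$ and therefore $A=0$. The special case where $\{F_n^*x_n\}_n$ is a frame is then just the instance $p=2$.
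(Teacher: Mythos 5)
Your proposal is correct and follows essentially the same route as the paper: reduce everything to $\tau_{\theta,F,x}(\cdot)=\operatorname{Tr}_{\theta,F,x}([\cdot])$, use the polar decomposition of $A$, $B$, $A+B$, $TA$, $AT$ together with Corollary \ref{TRACIALCOROLLARY} and Lemma \ref{TAULEMMA} for (i), (iii), (vi), (vii), (x), and argue termwise for (ii), (iv), (v), (viii), (ix) --- your only deviations are cosmetic (in (viii) you observe $[A^*A]=A^*A$ and get equality where the paper settles for $\leq$, and in (v) you route through $\sigma_{\theta,F,x}([A]^{1/2})$ rather than applying the displayed bound to $[A]h$ directly). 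Your remark that the $\theta$-compatibility hypothesis of Lemma \ref{TAULEMMA} must be carried as a standing assumption is apt: the paper's proof invokes that lemma without restating the hypothesis, so you are, if anything, slightly more careful than the original.
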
 
  \begin{proof}
 Let $A=W[A]$ be the polar decomposition of $A$.  	
 \begin{enumerate}[\upshape(i)]
 \item Using Corollary \ref{TRACIALCOROLLARY},  $\tau_{\theta,F,x}(A^*)=\operatorname{Tr}_{\theta,F,x}([A^*])=\operatorname{Tr}_{\theta,F,x}(W[A]W^*)=\operatorname{Tr}_{\theta,F,x}(W^*W[A])=$ 
 
 $\sum_{n=1}^{\infty}\langle W^*W[A]F_n^*x_n, F_n^*x_n\rangle=\sum_{n=1}^{\infty}\langle W^*AF_n^*x_n, F_n^*x_n\rangle=\sum_{n=1}^{\infty}\langle [A]F_n^*x_n, F_n^*x_n\rangle=\operatorname{Tr}_{\theta,F,x}([A])=\tau_{\theta,F,x}(A)$.
 \item $\tau_{\theta,F,x}(\alpha A)=\operatorname{Tr}_{\theta,F,x}([\alpha A])=\operatorname{Tr}_{\theta,F,x}(|\alpha| [A])=|\alpha|\tau_{\theta,F,x}(A)$.
 \item   Let $B=W_1[B],A+B=W_2[A+B]$ be  polar decompositions of $B, A+B$, respectively. By using Lemma \ref{TAULEMMA}, $\tau_{\theta,F,x}(A+B)=\operatorname{Tr}_{\theta,F,x}([A+B])=\operatorname{Tr}_{\theta,F,x}(W_2^*(A+B))=\operatorname{Tr}_{\theta,F,x}(W_2^*W[A])+\operatorname{Tr}_{\theta,F,x}(W_2^*W_1[B])=|\operatorname{Tr}_{\theta,F,x}(W_2^*W[A])+\operatorname{Tr}_{\theta,F,x}(W_2^*W_1[B])| $ $\leq |\operatorname{Tr}_{\theta,F,x}(W_2^*W[A])|+|\operatorname{Tr}_{\theta,F,x}(W_2^*W_1[B])| $ $\leq\|W_2^*W\|\tau_{\theta,F,x}(A)+\|W_2^*W_1\|\tau_{\theta,F,x}(B)\leq\tau_{\theta,F,x}(A)+\tau_{\theta,F,x}(B)$.
 \item Since $[A]\geq0$, $\tau_{\theta,F,x}(A)=\sum_{n=1}^{\infty}\langle [A]F_n^*x_n, F_n^*x_n\rangle\geq0$. 
 \item $0=\tau_{\theta,F,x}(A)=\operatorname{Tr}_{\theta,F,x}([A])=\sum_{n=1}^{\infty}\langle [A]F_n^*x_n, F_n^*x_n\rangle=\sum_{n=1}^{\infty}\|[A]^{1/2}F_n^*x_n\|^2$  $\Rightarrow$ $[A]^{1/2}F_n^*x_n=0,\forall n \in \mathbb{N}$ $\Rightarrow$ $[A]F_n^*x_n=0,\forall n \in \mathbb{N}$ $\Rightarrow$ $a\|[A]h\|\leq (\sum_{n=1}^{\infty}|\langle [A]h, F_n^*x_n\rangle |^p)^{1/p}=(\sum_{n=1}^{\infty}|\langle h, [A]F_n^*x_n\rangle |^p)^{1/p}$ $=(\sum_{n=1}^{\infty}|\langle h, 0\rangle |^p)^{1/p}=0, \forall h \in \mathcal{H}$ $\Rightarrow$ $[A]=0$ $\Rightarrow$ $A=W[A]=0$.
 \item  We start from the polar decomposition of  $TA=W_3[TA]$ to get $ \tau_{\theta,F,x}(TA)=\operatorname{Tr}_{\theta,F,x}([TA])=\operatorname{Tr}_{\theta,F,x}(W_3^*TA)=\operatorname{Tr}_{\theta,F,x}(W_3^*TW[A])=\|W_3^*TW\|\tau_{\theta,F,x}(A)\leq \|T\|\tau_{\theta,F,x}(A)$. Similarly the polar decomposition of  $AT=W_4[AT]$ gives $ \tau_{\theta,F,x}(AT)=\operatorname{Tr}_{\theta,F,x}([AT])=\operatorname{Tr}_{\theta,F,x}(W_4^*AT)=\operatorname{Tr}_{\theta,F,x}(W_4^*W[A]T)$ $=\operatorname{Tr}_{\theta,F,x}(TW_4^*W[A])\leq\|TW_4^*W\|\operatorname{Tr}_{\theta,F,x}([A])\leq \|T\|\tau_{\theta,F,x}(A)$.
 \item  $|\operatorname{Tr}_{\theta,F,x}(A)|=|\operatorname{Tr}_{\theta,F,x}(W[A])|\leq\|W\|\tau_{\theta,F,x}(A) = \tau_{\theta,F,x}(A)$.
 \item From (iv) in Theorem \ref{TRACEIPCONNECTION}, $\sigma_{\theta, F,x}(A)^2=\operatorname{Tr}_{\theta,F,x}(A^*A)\leq\tau_{\theta,F,x}(A^*A)$.
 \item Using (ix) in Theorem \ref{TRACEIPCONNECTION}, $\tau_{\theta,F,x}(A)=\operatorname{Tr}_{\theta,F,x}([A])\leq \operatorname{Tr}_{\theta,F,x}([B])= \tau_{\theta,F,x}(B)$.
 \item Using (vi),  $0\leq \tau_{\theta,F,x}(A^n)\leq \|A^{n-1}\|\tau_{\theta,F,x}(A)\leq\|A\|^{n-1}\tau_{\theta,F,x}(A)\rightarrow 0$ as $n\rightarrow \infty$.
\end{enumerate}	 	
 \end{proof}
 \begin{corollary}
 Let 	$A \in \mathcal{T}_{\theta,F,x}(\mathcal{H})$ and  $\{G_n\}_n$ be an operator-valued orthonormal basis in  $\mathcal{B}(\mathcal{H},\mathcal{H}_0)$. Then the series $\sum_{n=1}^{\infty}|\langle AF_n^*x_n, G_n^*x_n\rangle|$ converges and $|\sum_{n=1}^{\infty}\langle AF_n^*x_n, G_n^*x_n\rangle|\leq\tau_{\theta,F,x}(A)$.
 \end{corollary}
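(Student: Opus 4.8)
The plan is to reduce the mixed two-basis sum to an ordinary trace against the single basis $\{F_n\}_n$, after which every estimate needed is already available. First I would invoke Theorem \ref{ORTHONORMALBASISCRITERION}: since $\{F_n\}_n$ and $\{G_n\}_n$ are both operator-valued orthonormal bases in $\mathcal{B}(\mathcal{H},\mathcal{H}_0)$, there is a (unique) unitary $U \in \mathcal{B}(\mathcal{H})$ with $G_n = F_n U$ for every $n$, and hence $G_n^* x_n = U^* F_n^* x_n$. Substituting this into the summand and transferring $U^*$ across the inner product gives, for each $n$, the identity $\langle A F_n^* x_n, G_n^* x_n\rangle = \langle A F_n^* x_n, U^* F_n^* x_n\rangle = \langle (UA) F_n^* x_n, F_n^* x_n\rangle$.

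With this identity in hand the whole question collapses to a statement about the single operator $UA$. By Theorem \ref{TRACEIPCONNECTION}(iii), $UA \in \mathcal{T}_{\theta,F,x}(\mathcal{H})$, so I may write $UA = BC$ with $B, C \in \mathcal{S}_{\theta,F,x}(\mathcal{H})$; Lemma \ref{GTRACECLASSEXISTENCELEMMA} then guarantees that $\sum_{n=1}^{\infty}|\langle (UA)F_n^* x_n, F_n^* x_n\rangle|$ converges. Since this is exactly $\sum_{n=1}^{\infty}|\langle A F_n^* x_n, G_n^* x_n\rangle|$, the convergence assertion is settled, and moreover $\sum_{n=1}^{\infty}\langle A F_n^* x_n, G_n^* x_n\rangle = \operatorname{Tr}_{\theta,F,x}(UA)$ by Definition \ref{TRACECLASSDEFINITION}.

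For the inequality I would simply chain the two trace estimates already established: $|\operatorname{Tr}_{\theta,F,x}(UA)| \leq \tau_{\theta,F,x}(UA)$ by Theorem \ref{TRACETAUINEQUALITY}(vii), followed by $\tau_{\theta,F,x}(UA) \leq \|U\|\,\tau_{\theta,F,x}(A) = \tau_{\theta,F,x}(A)$ by Theorem \ref{TRACETAUINEQUALITY}(vi), using that $U$ is unitary so $\|U\| = 1$. Combining these yields $|\sum_{n=1}^{\infty}\langle A F_n^* x_n, G_n^* x_n\rangle| = |\operatorname{Tr}_{\theta,F,x}(UA)| \leq \tau_{\theta,F,x}(A)$, which is the claim.

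The only genuinely inventive step is the first: recognizing that the two distinct orthonormal bases are intertwined by a unitary and that absorbing this unitary into $A$ converts the mixed sum into $\operatorname{Tr}_{\theta,F,x}(UA)$. Everything afterward is a direct citation of the ideal property of $\mathcal{T}_{\theta,F,x}(\mathcal{H})$ and of the two trace inequalities. A point worth noting is that no compatibility hypothesis on $\theta$ is required here, because the substitution never leaves the class $\mathcal{T}_{\theta,F,x}(\mathcal{H})$: the computation stays entirely in terms of $F$ and $x$, so the $(\star)$-type conditions that appear elsewhere in this section do not enter.
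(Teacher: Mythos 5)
Your proposal is correct and follows essentially the same route as the paper: obtain the unitary $U$ with $G_n=F_nU$ from Theorem \ref{ORTHONORMALBASISCRITERION}, absorb it to rewrite the sum as $\operatorname{Tr}_{\theta,F,x}(UA)$ with $UA\in\mathcal{T}_{\theta,F,x}(\mathcal{H})$ by the ideal property, get convergence from Lemma \ref{GTRACECLASSEXISTENCELEMMA}, and conclude via the trace inequalities of Theorem \ref{TRACETAUINEQUALITY} with $\|U\|=1$. Your added remark that no $(\star)$-type condition on $\theta$ is needed is consistent with the paper's argument, which likewise invokes only the stated hypotheses of those results.
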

\begin{proof}
From 	Theorem \ref{ORTHONORMALBASISCRITERION}, there exists a unique unitary  $U \in \mathcal{B}(\mathcal{H})$ such that $G_n=F_nU, \forall n \in \mathbb{N}$.	Now 	$UA \in \mathcal{T}_{\theta,F,x}(\mathcal{H})$ and hence $\sum_{n=1}^{\infty}|\langle UAF_n^*x_n, F_n^*x_n\rangle|$ converges i.e., $\sum_{n=1}^{\infty}|\langle AF_n^*x_n, U^*F_n^*x_n\rangle|=\sum_{n=1}^{\infty}|\langle AF_n^*x_n, G_n^*x_n\rangle|$ converges. A usage of  Theorem \ref{TRACETAUINEQUALITY} gives $|\sum_{n=1}^{\infty}\langle AF_n^*x_n, G_n^*x_n\rangle|=|\operatorname{Tr}_{\theta,F,x}(UA)|$ $\leq\|U\|\tau_{\theta,F,x}(A)=\tau_{\theta,F,x}(A)$.
\end{proof}

 \section*{Acknowledgements}
  The first author thanks the National
  Institute of Technology Karnataka (NITK), Surathkal for giving financial support. The third author is grateful to the Mohapatra Family Foundation for their support to pursue this research.
  
  \section*{Data Availability}
  The datasets generated during and/or analysed during the current study are available from the corresponding author on reasonable request.

 \bibliographystyle{plain}
 \bibliography{reference.bib}
 
 \end{document}